\documentclass[11pt,x11names,dvipsnames,svgnames]{amsart}
\usepackage{amsfonts,amssymb,amscd,amstext,mathrsfs,mathtools}
\usepackage{hyperref}
\usepackage{verbatim}
\usepackage{comment}
\usepackage[T1]{fontenc} 

\usepackage{graphicx}
\usepackage[up,bf]{caption} 
\usepackage{subcaption}

\usepackage{newtxtext,newtxmath}
\usepackage{enumerate}

\numberwithin{equation}{section}
\numberwithin{figure}{section}

\newtheorem{theorem}{Theorem}[section]

\newtheorem{lemma}[theorem]{Lemma} 
\newtheorem{corollary}[theorem]{Corollary} 

\theoremstyle{definition} 
\newtheorem{definition}[theorem]{Definition} 
\newtheorem{remark}[theorem]{Remark} 
\newtheorem{question}[theorem]{Question}

\newcommand\bA{\mathbf{A}}

\newcommand\Jcal{\mathcal{J}}

\newcommand\bu{\mathbf{u}} 
\newcommand\bv{\mathbf{v}}

\newcommand\bw{\mathbf{w}}

\newcommand\Ascr{\mathscr{A}} 
 
\newcommand\Cscr{\mathscr{C}}

\newcommand\B{\mathbb{B}} 
\newcommand\C{\mathbb{C}} 
\newcommand\D{\mathbb D} 
\newcommand\cD{\overline{\mathbb D}} 
\newcommand\CP{\mathbb{CP}}

\newcommand\N{\mathbb{N}} 
 
\newcommand\R{\mathbb{R}}

\newcommand\T{\mathbb{T}} 
 
\newcommand\Z{\mathbb{Z}}

\newcommand\E{\mathrm{e}}

\newcommand\wt{\widetilde} 
 
\newcommand\di{\partial} 
\newcommand\dibar{\overline\partial}

\newcommand\dist{\mathrm{dist}} 
 
\newcommand\length{\mathrm{length}}

\newcommand\Id{\mathrm{Id}}

\def\Ell1{\mathrm{Ell_1}} 
\def\CEll1{\mathrm{CEll_1}}

\newcommand\Flux{\mathrm{Flux}} 
\newcommand\GCMI{\mathrm{GCMI}} 
\newcommand\CMI{\mathrm{CMI}}

\begin{document} 

\title{The Calabi--Yau and Sullivan conjectures for minimal surfaces \\
with symmetries}

\author{Antonio Alarc\'on and Franc Forstneri{\v c}}

\address{Antonio Alarc\'on, Departamento de Geometr\'{\i}a y Topolog\'{\i}a e Instituto de Matem\'aticas (IMAG), Universidad de Granada, Campus de Fuentenueva s/n, E--18071 Granada, Spain.}

\email{alarcon@ugr.es}

\address{Franc Forstneri{\v c}, Faculty of Mathematics and Physics, University of Ljubljana, Jadranska 19, 1000 Ljubljana, Slovenia}

\address{Franc Forstneri{\v c}, Institute of Mathematics, Physics, and Mechanics, Jadranska 19, 1000 Ljubljana, Slovenia}

\email{franc.forstneric@fmf.uni-lj.si}

\subjclass[2020]{Primary 53A10; Secondary 32E30, 32H02, 32Q56}

\date{22 October 2026}

\keywords{Riemann surface, minimal surface, equivariant conformal minimal immersion, Calabi--Yau problem, Sullivan conjecture} 

%
%
\begin{abstract}
Let $G$ be a finite group acting on a connected open Riemann surface 
$X$ by holomorphic automorphisms and acting on a Euclidean space 
$\mathbb R^n$, $n\ge 3$, by orthogonal transformations. 
In a previous paper, the second author gave a necessary and 
sufficient condition for the existence of a $G$-equivariant conformal minimal 
immersion $F:X\to\mathbb R^n$, and he asked whether such $F$ can be 
chosen to be a proper map. Assuming that this condition holds, 
we give an affirmative answer when $X$ is a bordered 
Riemann surface and $G$ acts freely on the boundary of $X$. 
This contributes to the Sullivan conjecture concerning the existence of proper hyperbolic minimal surfaces in $\R^3$ of finite topology.
Under the same conditions, we also construct 
a $G$-equivariant proper conformal minimal immersion 
of $X$ to the unit ball of $\R^n$ which is complete in the induced
metric, thereby opening a new direction in the Calabi--Yau theory 
regarding the study of complete bounded minimal surfaces. 
Analogous results hold for null curves in $\C^n$.
\end{abstract}

\maketitle

%
%
\section{Introduction}\label{sec:intro}

The classical Calabi-Yau and Sullivan conjectures concern the existence of complete bounded minimal surfaces in $\R^3$, and of proper hyperbolic minimal surfaces in $\R^3$ of finite topology, respectively. Since the pioneering positive results of Jorge-Xavier \cite{JorgeXavier1980AM} and Nadirashvili \cite{Nadirashvili1996IM} concerning the former, and of Morales \cite{Morales2003GAFA} regarding the latter, a primary topic of interest has been to understand the topological, geometric, and conformal properties of such surfaces; we refer to \cite[Sec.\ 3.10 and Ch.\ 7]{AlarconForstnericLopez2021} for surveys on these questions. Concerning possible conformal restrictions, Alarc\'on and L\'opez proved that every open Riemann surface is the underlying complex structure of a proper minimal surface in $\R^3$ \cite{AlarconLopez2012JDG}, while the authors showed that every bordered Riemann surface is the complex structure of a complete bounded minimal surface in $\R^n$
\cite{AlarconForstneric2015MA}. 

In this paper, we establish analogues of these existence results for minimal surfaces 
with a prescribed finite group of symmetries that are normalised by bordered 
Riemann surfaces. Our main new results are summarised in 
Theorems \ref{th:proper}, \ref{th:complete}, and \ref{th:complete-proper}.
To the best of our knowledge, these are the first 
general existence results regarding the Calabi-Yau and Sullivan conjectures 
for minimal surfaces with symmetries in Euclidean spaces.

Assume that $X$ is an open Riemann surface and $G$ is a finite group 
acting effectively on $X$ by holomorphic automorphisms. 
For any point $x\in X$, the stabiliser 
\begin{equation}\label{eq:stabiliser}	
	G_x=\{g\in G:gx=x\}
\end{equation}
is a cyclic subgroup of $G$ 
(see Miranda \cite[Proposition 3.1, p.\ 76]{Miranda1995}), and 
\begin{equation}\label{eq:X0}
	X_0 = \{x\in X: G_x\ne \{1\}\} 
\end{equation}
is a closed, discrete, $G$-invariant subset of $X$
(see \cite[Proposition 3.3, p.\ 76]{Miranda1995}).
The stabiliser $G_x$ acts on the tangent space $T_xX\cong\C$ as a cyclic 
group of rotations by 
\[
	gv=dg(v),\quad g\in G_x,\ v\in T_xX.
\]
Furthermore, there is a local holomorphic coordinate $z$ on $X$, 
with the point $x$ corresponding to $z=0$, in which 
a generator $g$ of $G_x$ is the rotation 
\begin{equation}\label{eq:rotation}
	g z = \E^{\imath \phi}z,\quad \phi=2\pi/|G_x|,\ \imath=\sqrt{-1}
\end{equation}
(see \cite[Corollary 3.5, p. 93]{Miranda1995}).
Assume that the group $G$ also acts on the Euclidean space $\R^n$. 
A map $F:X\to \R^n$ is said to be {\em $G$-equivariant} if 
\begin{equation}\label{eq:equivariant}
	F(gx) = gF(x)\ \ \text{for all $x\in X$ and $g\in G$}.
\end{equation}
The problem of existence, behaviour, and classification of $G$-equivariant 
maps has been studied in many geometries; see the discussion and 
references in 
\cite{DAngelo2021,KutzschebauchLarussonSchwarz2022},
among other sources. Recently, the second 
author gave the following necessary and sufficient condition 
for the existence of a $G$-equivariant conformal minimal immersion 
$X\to \R^n$, $n\ge 3$; see \cite[Theorem 1.1]{Forstneric2024PLMS}
and Remark \ref{rem:cond-a} below.

%
%
\begin{theorem}[Corrected form of Theorem 1.1 in \cite{Forstneric2024PLMS}] 
\label{th:FF}
Let $G$ be a finite group acting effectively on a connected open 
Riemann surface $X$ by holomorphic automorphisms and acting on 
the Euclidean space $\R^n$, $n\ge 3$, by orthogonal transformations. 
The following conditions are equivalent:
\begin{enumerate}[\rm (a)] 
\item 
For every point $x\in X$ with nontrivial stabiliser $G_x$ there exists a 
$G_x$-equivariant conformal linear embedding $T_x X \to \R^n$. 
\item 
There exists a $G$-equivariant conformal minimal immersion 
$F:X\to \R^n$. 
\end{enumerate}
If (a) holds then $F$ in (b) can be chosen such that 
$F(X)$ is not contained in any affine hyperplane of $\R^n$.
\end{theorem}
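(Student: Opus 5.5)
The implication (b)$\Rightarrow$(a) is the easy direction, and I would handle it by differentiation. Let $F$ be a $G$-equivariant conformal minimal immersion and let $x\in X_0$. Every $g\in G_x$ fixes $x$, so differentiating \eqref{eq:equivariant} at $x$ gives
\[
	dF_x\circ dg_x = g\circ dF_x .
\]
Since $F$ is a conformal immersion, $dF_x:T_xX\to\R^n$ is a conformal linear embedding. The identity above says exactly that it is $G_x$-equivariant for the rotation action $gv=dg(v)$ of $G_x$ on $T_xX$. This is (a).

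For (a)$\Rightarrow$(b), I would pass to holomorphic data. Write $2\di F=f\,\theta$, where $\theta$ is a nowhere vanishing holomorphic $1$-form and $f:X\to\bA_*$ is holomorphic into the punctured null quadric. Then $G$-equivariance of $F$ is equivalent to two conditions. First, the $\C^n$-valued $1$-form $\Phi=2\di F$ satisfies $g^*\Phi=g\Phi$ for all $g\in G$. Second, the periods are equivariant, $\Re\int_{g\gamma}\Phi=g\,\Re\int_\gamma\Phi$, and this can be arranged by fixing a base point and averaging. So the construction proceeds in three steps.

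\textbf{Step 1 (local models).} At each orbit $Gx$ with $x\in X_0$, I use (a) to pick a $G_x$-equivariant conformal embedding $L_x:T_xX\to\R^n$. Its complexification gives a null vector $w_x\in\bA_*$ with $g w_x=\E^{\imath\phi}w_x$, where $\phi$ is as in \eqref{eq:rotation}. In the coordinate $z$ from \eqref{eq:rotation}, the form $w_x\,dz$ is then $G_x$-equivariant. I transport this datum to the rest of the orbit by $G$.

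\textbf{Step 2 (equivariant Runge/Mergelyan).} I would pass to the quotient $X/G$, an open Riemann surface, and the orbifold map $X\to X/G$, and exhaust $X$ by $G$-invariant smoothly bounded Runge compacts $K_j$. Over $X\setminus X_0$ the action is free. There, $G$-equivariant null $1$-forms correspond to sections of a flat bundle over $(X\setminus X_0)/G$ with fibre $\bA_*$, twisted by the representation. The Oka property of $\bA_*$, for maps into the Oka manifold $\bA_*$ or rather its equivariant analogue, and period dominating sprays averaged over $G$, let me approximate and extend while killing the real periods equivariantly. Near points of $X_0$, I prescribe the $1$-jet $w_x$ so that the form does not vanish.

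\textbf{Step 3 (nondegeneracy).} I add small equivariant perturbations at a generic free orbit so that $F(X)$ spans $\R^n$. The averaging $\frac1{|G|}\sum g^{-1}\circ F\circ g$ preserves equivariance, but not conformality, so it cannot be used at the level of $F$. I therefore perturb only through equivariant sprays.

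The main obstacle is Step 2. The equivariant Oka principle for sections with prescribed behaviour at the fixed points is not an automatic consequence of the nonequivariant theory: averaging destroys nullity. I would try to resolve this by working on the quotient orbifold, where the problem becomes a Gromov--Oka problem for sections of a holomorphic fibre bundle with Oka fibre $\bA_*$, and by using (a) precisely to guarantee local solvability at the branch points. The apparent difficulty at fixed points, namely equivariant nonvanishing $1$-jets, is exactly what (a) supplies. I expect that this is where the "corrected form" matters: (a) must be imposed at every point with nontrivial stabiliser, not just at orbits.
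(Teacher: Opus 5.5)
Your proposal is correct in outline and follows the paper's route: (b)$\Rightarrow$(a) by differentiating the equivariance identity at a fixed point, and (a)$\Rightarrow$(b) by using (a) to build $G_x$-equivariant nonvanishing local null data at the points of $X_0$, then globalising via the Oka principle for sections over $X/G$ together with $G$-equivariant period dominating sprays. The paper encodes the globalisation as sections of the ramified map $(X\times Y)/G\to X/G$, with $Y=\overline\bA\setminus\{0\}$ and a $G$-invariant form $\theta=dh$ vanishing exactly on $X_0$; your choice of the integration constant by averaging is valid, since $g\mapsto\Re\int_{x_0}^{gx_0}\Phi$ is a $1$-cocycle and $H^1(G,\R^n)=0$.

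Two small corrections. First, the averaging in the sprays must be applied to the linear generators in the Lie algebra of $O(n,\C)$ (or carried out on the quotient), not to the maps themselves, since averaging maps destroys nullity. Second, the ``correction'' in the theorem concerns the rotation angle, not points versus orbits: a generator of $G_x$ must rotate the plane by the same angle $2\pi/|G_x|$ as on $T_xX$, which condition (a') of Remark~\ref{rem:cond-a} does not ensure, whereas (a) at one point of an orbit already implies it at all others by conjugation.
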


Condition (a) is equivalent to the following. 
Let $x\in X$ be a point with a nontrivial stabiliser group $G_x$,
and let $g$ be a generator of $G_x$ such that $dg_x:T_x X\to T_xX$ 
is the rotation $R_\phi$ through the angle $\phi=2\pi/|G_x|$ in the
positive direction.  
Then there exists a two-dimensional real linear subspace 
$\Lambda\subset \R^n$ on which $g$ also acts as the rotation $R_\phi$
in one of the two orientations of $\Lambda$. 

A more precise version of this result is given by 
\cite[Theorem 5.1]{Forstneric2024PLMS}; see Theorem \ref{th:FFbis}.
Recall that a conformal minimal immersion \(F:X\to\R^n\) is called 
{\em nondegenerate} if \(F(X)\) is not contained in an affine hyperplane of 
$\R^n$, and {\em nonflat} if its image is not contained in an affine 
two-plane. Thus, when $n>3$, nondegeneracy is strictly stronger than 
nonflatness.

%
%
\begin{remark}\label{rem:cond-a}
Condition (a) in Theorem \ref{th:FF} is stronger than the one in 
\cite[Theorem 1.1]{Forstneric2024PLMS}, which reads
\begin{enumerate}[\rm (a')]
\item
For every nontrivial stabiliser $G_x$, $x\in X$, there is a two-dimensional linear subspace $\Lambda_x\subset\R^n$ on which $G_x$ acts effectively as a cyclic group of rotations.
\end{enumerate}
This condition does not suffice for the existence of a $G_x$-equivariant 
conformal linear isomorphism 
$T_x X \stackrel{\cong}{\to} \Lambda_x \subset \R^n$, 
which is used in \cite{Forstneric2024PLMS}. The point
is that a generator of $G_x$ must act as a rotation by the same angle on
both $T_xX$ and $\Lambda_x$. Hence, condition (a') must be replaced by 
condition (a) in Theorem \ref{th:FF} in all existence theorems in 
\cite{Forstneric2024PLMS}. On the other hand, condition (a') 
ensures the existence of $G$-equivariant conformal minimal surfaces
$F:X\to\R^n$ which are branched at those points of $X_0$ 
\eqref{eq:X0} where condition (a) fails; see Theorem \ref{th:branched}.
\end{remark}

Assuming that condition (a) in Theorem \ref{th:FF} holds, several 
questions about possible additional properties of a conformal minimal 
immersion $F$ in part (b) of the theorem were posed in 
\cite{Forstneric2024PLMS}. 
It was asked in particular whether $F$ can be chosen to be a
{\em proper map} to $\R^n$; see \cite[Problem 1.13]{Forstneric2024PLMS}.  
We give an affirmative answer in the important special case when $X$ is a 
bordered Riemann surface and $G$ acts freely on its boundary $bX$.
(Throughout the paper, a bordered Riemann surface means the interior 
\(X\) of a connected compact domain \(\overline X=X\cup bX\) with nonempty 
smooth boundary $bX$ in a Riemann surface $R$.) 
Since every such surface is of finite 
topology and hyperbolic conformal type, the following result pertains to the 
study of the aforementioned Sullivan conjecture.

%
%
\begin{theorem} \label{th:proper}
Let $X$ be a bordered Riemann surface and $G$ be 
a finite group acting effectively on $X$ by holomorphic 
automorphisms such that no element $g\in G\setminus \{1\}$ 
has any fixed points on $bX$, i.e.\ $G$ acts freely on $bX$. 
Assume that $G$ also acts on $\R^n$, $n\ge 3$, by 
orthogonal transformations. If condition (a) in Theorem \ref{th:FF}
holds then there exists a proper, nondegenerate, $G$-equivariant 
conformal minimal immersion $F:X\to \R^n$.
\end{theorem}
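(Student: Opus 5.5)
We follow the standard scheme for constructing proper conformal minimal immersions of bordered Riemann surfaces: Riemann--Hilbert modifications along the boundary, applied in a $G$-equivariant way. The first step is to invoke Theorem \ref{th:FF} in its more precise form (Theorem \ref{th:FFbis}). This gives a nondegenerate $G$-equivariant conformal minimal immersion $F_0:\overline X\to\R^n$, smooth up to the boundary, with the interpolation at the points of the finite set $X_0$ needed to keep equivariance. The rest of the proof is an inductive construction. We build a sequence of $G$-equivariant conformal minimal immersions $F_k:\overline X\to\R^n$ and an exhaustion of $X$ by $G$-invariant compact smoothly bounded domains $K_1\subset K_2\subset\cdots$ containing $X_0$. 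The requirements are:
\begin{enumerate}[\rm (i)]
\item $\|F_{k}-F_{k-1}\|_{C^1(K_{k-1})}<\epsilon_k$;
\item $|F_k|>k$ on $\overline X\setminus \mathring K_k$.
\end{enumerate}
With $\epsilon_k$ small enough, the limit $F=\lim F_k$ is then a $G$-equivariant conformal minimal immersion, nondegenerate by Hurwitz-type stability, and proper.

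The inductive step uses equivariant averaging over a fundamental domain of the boundary. Since $G$ acts freely on $bX$ and $bX$ is a finite union of circles, $G$ permutes these circles, and the stabiliser of each circle acts on it freely by rotations. We can therefore choose a finite collection of disjoint closed arcs $I_1,\dots,I_m\subset bX$ whose $G$-orbits $gI_j$ are pairwise disjoint and cover $bX$ up to small gaps. We choose the annular collar $A=\overline X\setminus \mathring K_k$ so thin that $G$ acts freely on it; this is possible because $X_0$ is discrete and does not accumulate at $bX$. On $A$ we can then pick a fundamental piece $A'$, consisting of disjoint simply connected regions over the arcs $I_j$ and the gaps. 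The usual two-stage procedure of Alarc\'on--Drinovec Drnov\v sek--Forstneri\v c--L\'opez first pushes $|F_{k-1}|$ to be large near the gaps and then near the arcs. It uses the Riemann--Hilbert method with discs in affine $2$-planes orthogonal (in the appropriate sense) to $F_{k-1}(x)$, so that $|F|$ grows by the Pythagorean estimate while staying close to $F_{k-1}$ on $K_{k-1}$.

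We carry out this modification only over the fundamental arcs and transport it by the group: the new boundary data at $gx$ is defined as $g$ applied to the data at $x$. Because $G$ acts on $\R^n$ by orthogonal maps, the Pythagorean growth estimate $|F(gx)|=|F(x)|$ is preserved. To obtain a global $G$-equivariant solution, we solve the Riemann--Hilbert problem equivariantly. One way is to work on the quotient bordered Riemann surface $X/G$, which is an orbifold only at the images of $X_0$, away from the boundary; near the boundary, where the action is free, it is an honest surface. Another way is to perform the non-equivariant Riemann--Hilbert step and then apply the $G$-averaging operator $F\mapsto \frac1{|G|}\sum_g g^{-1}F\circ g$ to the $1$-form $\di F$. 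Averaging preserves holomorphy, nullity fails in general, so the period/nullity correction must be done by an equivariant spray. I would instead try to work directly in the equivariant category with sprays of $G$-equivariant null holomorphic $1$-forms. The key tool is the equivariant Mergelyan/period-dominating spray machinery developed in \cite{Forstneric2024PLMS}, which gives $G$-equivariant approximation with control of periods on a $G$-invariant homology basis and interpolation on $X_0$.

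The main obstacle is exactly this equivariant Riemann--Hilbert step. We need a $G$-equivariant null curve (the $\di$-derivative gives $F$) approximating given equivariant data on $K_{k-1}$, with the boundary behaviour prescribed on a full orbit of arcs and with the equivariant periods killed. My plan is as follows. First, solve the Riemann--Hilbert problem on the fundamental arcs for the non-equivariant null curve. Then define the modification on the translates by the group action; this is consistent because the arcs have disjoint orbits, i.e.\ freeness is used here. Finally, glue these pieces by an equivariant Cartan-type splitting, or by the equivariant period-dominating spray, to restore holomorphy and exact periods. The freeness of the action on $bX$ ensures that no boundary point has a stabiliser. Without it, a fixed boundary point would force the Riemann--Hilbert discs to be $G_x$-invariant, which is incompatible with pushing $|F|$ to infinity in arbitrary directions. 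Nondegeneracy at the end is arranged by keeping $F_k$ close to the nondegenerate $F_0$ on $K_1$.
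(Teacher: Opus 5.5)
There is a genuine gap. Your overall architecture matches the paper's: an induction driven by a boundary-lifting step, with Riemann--Hilbert discs in planes orthogonal to $F(x)$. The local data are built on one representative of each $G$-orbit of boundary arcs, transported by $G$, and glued by $G$-equivariant period-dominating sprays over the quotient. Your equivariant Riemann--Hilbert step is essentially Theorem~\ref{th:RH}.

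The gap is your first stage, ``pushing $|F|$ outward near the gaps''. You give no mechanism for it, and your text suggests the Riemann--Hilbert method does this too; it cannot. In Theorem~\ref{th:RH} the frame $(\bu_i,\bv_i)$ is constant on each arc, and the radius function is supported on the arcs. So the radius vanishes at the arc endpoints, which is exactly where the frame has to switch. At these points nothing is lifted and nearby only a little, and discs orthogonal to $F(x)$ never move a point radially outward. A second Riemann--Hilbert pass over the gaps has the same defect at its own endpoints. For $n=3$ it cannot even use discs transverse to those of the first pass, since $F(x)^\perp$ is the only admissible plane.

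The paper handles the junction points $x_i$ by \emph{exposing boundary points}. It attaches arcs $E_i$ at the points $x_i$. It extends $F$ along $E_i$ as a $G$-equivariant generalised conformal minimal immersion, close to the radial segment $[F(x_i),\lambda_iF(x_i)]$ ending in the outer shell. It then precomposes with a $G$-equivariant holomorphic reparametrisation of $\overline X$ sending $x_i$ to the far end of $E_i$ and a small neighbourhood of $x_i$ into a thin neighbourhood of $E_i$. Only after this does a single Riemann--Hilbert step on the remaining arcs finish the lifting, with disc radii tapering to zero exactly where the radial spikes already lie in the shell.

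The missing ingredients are therefore:
\begin{itemize}
\item Theorem~\ref{th:exposing}: expose on the quotient $\overline X/G$, and lift the resulting homotopy by the covering homotopy property. This uses that $G$ acts freely near $bX$ and along the attached arcs, and requires tangency to the identity to high order at $X_0$ so that the lift extends holomorphically there.
\item Corollary~\ref{cor:curves}, which supplies the equivariant approximation of the generalised conformal minimal immersion on $\overline X\cup E$.
\end{itemize}

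There are two smaller omissions.
\begin{itemize}
\item Your inductive conditions (i)--(ii) do not give properness of the limit. Nothing controls $F_{k+1}$ on $K_{k+1}\setminus K_k$, and that is where the limit is close to $F_{k+1}$. The lifting step must also keep the new map outside the inner ball on the whole collar $\overline X\setminus K_k$, and the induction must record this. In the paper this is Lemma~\ref{lem:proper}(b), obtained from the interior estimate (ii) in Theorem~\ref{th:RH} together with the orthogonality of the discs.
\item The radial arcs require $F(x_i)\neq 0$, so the induction must start from a map with $0\notin F_0(bX)$. Translations are not $G$-equivariant in general, so the paper arranges this by a $G$-equivariant reparametrisation (Lemma~\ref{lem:nozeros}).
\end{itemize}
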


%
%
\begin{remark}\label{rem:XG}
We may assume without loss of generality that the bordered Riemann surface
$X$ in Theorem \ref{th:proper} is a relatively compact $G$-invariant domain 
with real analytic boundary in an open Riemann surface $R$ and that 
$G$ acts on $R$ by holomorphic automorphisms.
Indeed, by \cite[Theorem 8]{Stout1965TAMS} the surface
$X$ is biholomorphic to 
the complement $X=X^*\setminus \bigcup_{j=1}^m \Delta_j$
of finitely many closed pairwise disjoint discs $\Delta_j$ with
real analytic boundaries in a compact Riemann surface $X^*$.
By the Schwarz reflection principle, every holomorphic 
automorphism of $X$ extends to an injective holomorphic map
on a neighbourhood of $\overline X$ in $X^*$.
Since the group $G$ is finite, there is a 
neighbourhood $U$ of $\overline X$ on which all these maps 
are defined, and they satisfy the group law.
The connected component $R$ of the $G$-invariant domain 
$\bigcap_{g\in G} gU$ containing $\overline X$ is then an open Riemann 
surface on which $G$ acts by holomorphic automorphisms.
More generally, Maskit \cite{Maskit1968} proved that every open 
Riemann surface $X$ of finite genus embeds in a compact Riemann surface 
$X^*$ of the same genus such that every holomorphic automorphism 
of $X$ extends to a holomorphic automorphism of $X^*$. 
\end{remark}

Theorem \ref{th:proper} is proved by combining the technique
from \cite{Forstneric2024PLMS}, which was used to prove 
Theorem \ref{th:FF} (see the exposition in Section \ref{sec:prelim}), 
with the method of constructing proper conformal minimal immersions 
from a bordered Riemann surface $X$ to any convex domain 
in $\R^n$ (see \cite[Theorem 1.4 and Corollary 1.5]{AlarconDrinovecForstnericLopez2015PLMS}) by using the technique for 
exposing boundary points and the Riemann--Hilbert modification method 
for conformal minimal surfaces; see Subsections  \ref{sec:exposing} and 
\ref{sec:RH}. In the case at hand, the target convex domain 
is $\R^n$ itself, which simplifies the construction.

Another method for constructing proper minimal immersions
of an arbitrary open Riemann surface to $\R^n$ for any $n\ge 3$,
relying on the Runge-Mergelyan approximation theorems for conformal 
minimal immersions, was first introduced in \cite{AlarconLopez2012JDG} 
for $n=3$ and developed in full generality in 
\cite{AlarconForstnericLopez2016MZ}; see also
\cite[Theorem 3.10.3]{AlarconForstnericLopez2021}.
However, the induction step in this method requires one
to fix some of the components of the map under construction
to be harmonic functions on $X$, and we were unable 
to adapt it to the equivariant situation.
Nevertheless, combining the proofs of Theorems \ref{th:FF}
and \ref{th:proper} implies the following corollary.

%
%
\begin{corollary}\label{cor:almostproper}
Let $X$ and $G$ be as in Theorem \ref{th:FF}, satisfying condition (a).
Then there exists an almost proper $G$-equivariant conformal 
minimal immersion $X\to \R^n$.
\end{corollary}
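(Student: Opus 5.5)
We construct the almost proper map as a limit of equivariant immersions defined on an exhaustion of $X$ by $G$-invariant bordered domains. Recall that $F:X\to\R^n$ is almost proper if every connected component of $F^{-1}(K)$ is compact for every compact $K\subset\R^n$.

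\emph{Step 1: the exhaustion.} Choose a smooth $G$-invariant strongly subharmonic Morse exhaustion function $\rho:X\to\R$, for instance by averaging a Morse exhaustion over $G$ and perturbing equivariantly. Take an increasing sequence of regular values $c_j\to\infty$ such that each level set $\{\rho=c_j\}$ avoids the closed discrete set $X_0$. This gives $G$-invariant smoothly bounded compact domains $D_1\subset D_2\subset\cdots$ with $\bigcup_j D_j=X$ and $X_0\cap bD_j=\emptyset$. Every point of $bD_j$ then has trivial stabiliser, so $G$ acts freely on $bD_j$. Each component of $\mathring D_j$ is a bordered Riemann surface. These components are permuted by $G$, and the stabiliser of a component acts on it freely on its boundary.

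\emph{Step 2: the inductive step.} Start with $F_1$, a $G$-equivariant conformal minimal immersion on a neighbourhood of $D_1$, given by Theorem \ref{th:FF} (or its precise form, Theorem \ref{th:FFbis}). Suppose $F_j$ is defined near $D_j$ with $|F_j|>j$ on $bD_j$. We produce $F_{j+1}$ in two stages.
\begin{enumerate}[\rm (i)]
\item Use the equivariant Mergelyan approximation from \cite{Forstneric2024PLMS}, which is the engine of Theorem \ref{th:FF}, to approximate $F_j$ uniformly on $D_j$ by a $G$-equivariant immersion $\wt F$ defined near $D_{j+1}$. This step passes critical points of $\rho$ and attaches arcs in a $G$-equivariant way, and it may take place on $G$-orbits of components.
\item Apply the construction from the proof of Theorem \ref{th:proper}, namely equivariant exposing of boundary points together with the equivariant Riemann--Hilbert method, to the bordered surface $\mathring D_{j+1}$. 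This modifies $\wt F$ only near $bD_{j+1}$ (and near $G$-orbits of arcs reaching it), so that $|F_{j+1}|>j+1$ on $bD_{j+1}$. It also keeps $F_{j+1}$ close to $F_j$ on $D_j$ and keeps $|F_{j+1}|>j$ on $D_{j+1}\setminus \mathring D_j$.
\end{enumerate}
The second inequality comes from the usual Riemann--Hilbert estimate: the image moves nearly orthogonally to the previous position, so $|F|$ does not drop by more than a prescribed $\epsilon_j$ on the collar. Because all data are $G$-invariant, averaging or the equivariant versions keep every map $G$-equivariant.

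\emph{Step 3: the limit.} Choose the approximations summable. Then $F=\lim F_j$ exists, is $G$-equivariant and conformal minimal, and is an immersion, provided that at each stage we also keep the derivative nonvanishing on $D_j$ by the standard small-approximation argument. Moreover $|F|>j-1$ on $X\setminus \mathring D_j$ for every $j$. Now let $K\subset\{|x|\le j-1\}$. Any connected component of $F^{-1}(K)$ that meets $D_j$ cannot cross $bD_j$, since $|F|>j-1$ there. Hence it lies in the compact set $D_j$ and is compact.

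This argument only gives what the corollary claims. Components of $F^{-1}(K)$ lying entirely in $X\setminus D_j$ are contained in some collar $D_k\setminus\mathring D_{k-1}$, where $|F|>k-2$. For $k$ large such components are empty, so they are contained in a compact set; this yields almost properness rather than properness.

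The main obstacle is Step 2(ii) on a domain with possibly nontrivial stabilisers in its interior. We must check that the proof of Theorem \ref{th:proper} produces large boundary values while fixing the map to high order at the interior points of $X_0$, preserving the $G_x$-equivariant conformal jets there. It must also provide the lower bound on the collar. I would first try to show that the Riemann--Hilbert deformations in the proof of Theorem \ref{th:proper} are supported near a $G$-invariant union of arcs and boundary discs, all disjoint from $X_0$. Equivariant interpolation at $X_0$ is then automatic.
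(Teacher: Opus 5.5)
Your overall route is the one the paper intends when it says the corollary follows by combining the proofs of Theorems \ref{th:FF} and \ref{th:proper}: exhaust $X$ by $G$-invariant Runge bordered domains $D_j$, extend with Theorem \ref{th:FFbis}, and push $bD_{j+1}$ outward with Lemma \ref{lem:proper}. The worry in your last paragraph is already settled in the paper. Lemma \ref{lem:proper} is proved for bordered surfaces with arbitrary interior stabilisers: the exposing and Riemann--Hilbert steps interpolate on finite $G$-invariant sets containing $X_0$ (Corollary \ref{cor:curves}(c), Theorem \ref{th:RH}(iv)). Moreover, $\Cscr^1$-approximation on $D_j$ is all the limit needs.

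\textbf{The step that fails.} The collar bound $|F_{j+1}|>j$ on $D_{j+1}\setminus\mathring D_j$ in Step 2(ii) cannot be obtained, and neither can the claim in Step 3 that $|F|>j-1$ on $X\setminus\mathring D_j$.
\begin{enumerate}[\rm (1)]
\item After step (i), the values of $\wt F$ on $D_{j+1}\setminus D_j$ are completely uncontrolled, in particular on the new handles. The Riemann--Hilbert estimate only says that $|F|$ does not drop much below its \emph{previous} value.
\item Lemma \ref{lem:proper}(b) merely preserves a lower bound that is already assumed in \eqref{eq:lemproper}; it does not create one.
\item What you can legitimately do is the following. First make $0\notin\wt F(bD_{j+1})$ by Lemma \ref{lem:nozeros}. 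Then take a $G$-invariant compact set $K\supset D_j$ whose complement in $D_{j+1}$ is a thin collar on which $|\wt F|>r>0$. Apply Lemma \ref{lem:proper} with $s-\delta>j+1$, on a representative of each $G$-orbit of components if $D_{j+1}$ is disconnected. This yields $|F_{j+1}|>j+1$ on $bD_{j+1}$, and nothing at all on $K\setminus D_j$.
\end{enumerate}
If your collar bound held for every $j$, it would give $F^{-1}\bigl((j-1)\overline\B\bigr)\subset\mathring D_j$, that is, properness. That is exactly what the paper says it cannot achieve for general open $X$. So your remark that the argument ``yields almost properness rather than properness'' is backwards.

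\textbf{The fix.} The collar bound is not needed. With summable approximations (of total size less than $1$ on each $D_k$), you have $\min_{bD_k}|F|>k-1$ for every $k$. Let $L\subset R\,\B$ be compact and let $C$ be a component of $F^{-1}(L)$. Choose $k>R+1$ with $C\cap D_k\neq\varnothing$. Then $C\cap bD_k=\varnothing$, so $C\subset\mathring D_k$ by connectedness. Since $C$ is closed in $X$, it is compact. Replace the second half of Step 3 by this argument and drop the collar claim from Step 2(ii).
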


Recall that a continuous map $F:X\to Y$ of topological spaces
is called almost proper if for every compact set $K\subset Y$,
every connected component of the preimage $F^{-1}(K)$ is compact.

%
%
Another question related to the Sullivan conjecture that has been a focus of 
interest is that by Schoen and Yau, asking whether there are hyperbolic minimal 
surfaces in $\R^3$ with a proper projection into a plane (see 
\cite[p.\ 18]{SchoenYau1997} and \cite[Sec.\ 3.10]{AlarconForstnericLopez2021}). 
This problem was settled in \cite{AlarconLopez2012JDG}, where it was shown 
that projecting properly into a plane does not impose any restriction on the 
underlying complex structure of a minimal surface in 
$\R^3$. The following remains as an open question.

\begin{question}
Let $X$ and $G$ be as in Theorem \ref{th:FF}, satisfying condition (a). 
Does there exist a $G$-equivariant conformal minimal immersion 
$X\to\R^3$ having a proper 
projection into a plane?
\end{question}

Let $ds^2$ denote the Euclidean metric on $\R^n$.
Let us call a $\Cscr^1$ immersion $F:X\to \R^n$ bounded if its image $F(X)$
is contained in a ball, and complete if the induced Riemannian metric 
$F^*ds^2$ on $X$ is a complete metric. 
With $X$ and $G$ as in Theorem \ref{th:proper},
it is natural to ask whether there exists a $G$-equivariant 
conformal minimal immersion $F:X\to \R^n$ which is 
bounded and complete (see \cite[Problem 1.12]{Forstneric2024PLMS}).
We provide the following affirmative answer. 

%
%
%
\begin{theorem} \label{th:complete}
Let $X$ and $G$ be as in Theorem \ref{th:proper}.
If condition (a) in Theorem \ref{th:FF} holds then there exists a 
continuous $G$-equivariant map $F: \overline X\to\R^n$ 
whose restriction $F|_{X}: X\to\R^n$ is a 
complete, nondegenerate conformal minimal immersion with a 
bounded image. Furthermore, any nonflat $G$-equivariant conformal
minimal immersion $F_0:\overline X\to\R^n$ can be 
approximated uniformly on $\overline X$ by maps 
$F: \overline X\to\R^n$ as above. 
\end{theorem}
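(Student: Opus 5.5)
We follow the scheme of \cite{AlarconForstneric2015MA}, made equivariant via the technique of \cite{Forstneric2024PLMS}. By Remark \ref{rem:XG} we assume that $X$ is a relatively compact $G$-invariant domain with real analytic boundary in an open Riemann surface $R$ on which $G$ acts. By Theorem \ref{th:FF} (in its precise form, Theorem \ref{th:FFbis}) there is a nondegenerate $G$-equivariant conformal minimal immersion $F_0:\overline X\to\R^n$. For the approximation statement we instead start from a given nonflat $G$-equivariant $F_0$; nonflatness is exactly what lets us perturb it to a nondegenerate one through the equivariant period-dominating sprays of Section \ref{sec:prelim}.

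The map $F$ will be the uniform limit on $\overline X$ of a sequence $F_k:\overline X\to\R^n$ of $G$-equivariant conformal minimal immersions. Each step should satisfy
\[
\|F_{k}-F_{k-1}\|_{\Cscr^0(\overline X)}<\epsilon_k,\qquad \dist_{F_k}(x_0,bX)>k,
\]
and $F_k$ should stay $\Cscr^1$-close to $F_{k-1}$ on a compact $G$-invariant set $K_{k-1}\subset X$ that exhausts $X$. The completeness estimate here is the intrinsic one. Choosing $\epsilon_k$ summable and small enough that immersivity and the lower bounds on intrinsic distances survive in the limit, the limit $F$ is continuous on $\overline X$, $G$-equivariant, bounded, complete, nondegenerate, and uniformly close to $F_0$.

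The inductive step enlarges the intrinsic boundary distance while keeping the $\Cscr^0$ change small. We do it with the Riemann--Hilbert modification method of Subsection \ref{sec:RH}, arranged equivariantly. Since $G$ acts freely on $bX$, each boundary component $C$ has a trivial stabiliser in the sense that $gC\cap C$ is either empty or $C$ itself. We split $bX$ into finitely many closed arcs $I_1,\dots,I_m$ so that their translates $gI_j$ ($g\in G$) are pairwise disjoint except at endpoints. This is possible because the action on $bX$ is free, so a small enough arc meets none of its translates.

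On a fundamental family of arcs we apply the standard estimate of \cite{AlarconForstneric2015MA}: attach small conformal discs in directions orthogonal to $F_{k-1}$, so that boundary distances increase by a definite amount $\delta$ while the map moves by at most about $\delta^{1/2}$ in $\Cscr^0$. Iterating over a sequence of arcs and directions then gives a large intrinsic increase with small displacement, by the Pythagorean trick. The modification on $I_j$ is transported to $gI_j$ by setting the Riemann--Hilbert data there to be $g$ applied to the data on $I_j$. The $G$-invariant family of disjoint local modifications is then glued into a single map.

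This gluing is the step I expect to be the main obstacle: doing it equivariantly while also killing periods. We would first average the Riemann--Hilbert solution over $G$. Because the supports of the modifications are disjoint and the data are $G$-compatible, the average changes the map only by a small error on $\overline X$. The remaining period defects and the loss of conformality would then be corrected by the $G$-equivariant period-dominating spray of Section \ref{sec:prelim}. That spray is constructed on $\overline X/G$ away from $X_0$, and it fixes the required jets at points of $X_0$, so immersivity and equivariance persist there. Condition (a) is used only through the existence of the nondegenerate initial map and this spray.
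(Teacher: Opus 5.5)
\textbf{Main gap: the endpoints of the boundary arcs.} Riemann--Hilbert deformations alone do not increase $\dist(x_0,bX)$ uniformly, and your inductive step relies on them alone. Theorem \ref{th:RH} needs three things: pairwise disjoint arcs, a fixed frame $(\bu_i,\bv_i)$ on each arc, and a size function $r$ supported on the union of the arcs. Arcs that meet at endpoints do not fit this. Even if they did, continuity of $\varkappa$ with different frames on adjacent arcs forces $r$ to vanish where the arcs meet. A path from $x_0$ to a boundary point $y$ gains only about $r(y)\eta$ of extra length, and this tends to $0$ near those points. So your ``definite'' increase fails there, and repeating the step with the same partition never repairs it. Deforming again along shifted arcs is not an obvious fix either: for $n=3$ the new discs lie in nearly the same plane orthogonal to $F-\Phi$, so they can undo the displacement produced by the first family.

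This is exactly why Lemma \ref{lem:equivariant-distance-2} begins with a different deformation, carried out before any Riemann--Hilbert step. At the $G$-invariant set of endpoints $x_i$ it attaches arcs $E_i$. It extends $F$ over them with small extrinsic diameter, but with the projection onto $F(x_i)-\Phi(x_i)$ having total variation larger than $\eta$. It then uses equivariant exposing of boundary points (Theorem \ref{th:exposing} and Corollary \ref{cor:curves}) to obtain $F'$ satisfying property (vii): every path reaching the boundary pieces $\Gamma_i$ around $x_i$ has long projected image. Only afterwards is Theorem \ref{th:RH} applied, on the subarcs $J'_I$ with frames orthogonal to $F(z_I)-\Phi(z_I)$; these barely change that projection, so (vii) survives. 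Your proposal has no counterpart of this step. Its equivariant version is one of the paper's new ingredients: the exposing map is built on $\overline X/G$ and lifted through the covering, which uses that $G$ acts freely near $bX$ and on the attached arcs.

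\textbf{Secondary problems with the gluing and the disc directions.} Averaging over $G$ would fail. The $G$-average of a non-equivariant conformal minimal immersion is harmonic but in general not conformal, since nullity of $\di F$ is a quadratic condition. Period-dominating sprays only correct the periods of maps that already take values in $Y$; they cannot restore nullity. Averaging also destroys the estimate $\dist(\wt F(x),\varkappa(x,\T))<\epsilon$, because the mean of points lying near a circle may lie near its centre. None of this is needed. Theorem \ref{th:RH} produces an equivariant solution directly: it builds null data on orbit representatives, transports them as in \eqref{eq:fj}, and glues sprays of sections of $\rho\colon Z\to R/G$ over $\overline X/G$ via Lemma \ref{lem:correspondence}, subject to \eqref{eq:RHperiods}--\eqref{eq:RHperiods3}.

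The discs must also be orthogonal to $F-\Phi$ for a fixed reference map $\Phi$ on $bX$, as in Lemma \ref{lem:equivariant-distance-2}. Then $\|F-\Phi\|_{0,bX}$ grows from $\delta$ only to $\sqrt{\delta^2+\eta^2}$ while the boundary distance grows by $\eta$. Discs orthogonal to the position vector $F_{k-1}(x)$ bound $|F|$ but not the displacement, so they do not give the $\Cscr^0$-small steps needed to approximate $F_0$. Your outer induction itself does match the paper: summable $\Cscr^0$ steps with $\Cscr^1$ control on an exhausting sequence $K_j$.
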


Without the $G$-equivariance condition, Theorem \ref{th:complete} is 
a special case of 
\cite[Theorem 1.1]{AlarconDrinovecForstnericLopez2015PLMS}. 
Combining the methods of proof of Theorems \ref{th:proper}
and \ref{th:complete} we obtain the following.

%
%
%
\begin{theorem} \label{th:complete-proper}
Assume that $X$ and $G$ are as in Theorem \ref{th:proper}
and condition (a) in Theorem \ref{th:FF} holds.
Then there exists a continuous $G$-equivariant 
map $F: \overline X\to \overline \B^n$ 
to the closed unit ball of $\R^n$ such that the restriction
$F|_X:X\to \B^n$ is a complete, proper, nondegenerate $G$-equivariant 
conformal minimal immersion. 
\end{theorem}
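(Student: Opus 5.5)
We construct $F$ as the uniform limit on $\overline X$ of a sequence of $G$-equivariant conformal minimal immersions $F_k:\overline X\to\B^n$. The scheme is the one in \cite[Theorem 1.4]{AlarconDrinovecForstnericLopez2015PLMS} for complete proper maps into convex domains, carried out equivariantly as in the proofs of Theorems \ref{th:proper} and \ref{th:complete}.

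We begin with a nondegenerate $G$-equivariant conformal minimal immersion $F_0:\overline X\to\R^n$, given by Theorem \ref{th:FF} and Remark \ref{rem:XG}, which we rescale and translate so that $F_0(\overline X)\subset \B^n$. The translation must be by a $G$-fixed vector; the origin works, since $G$ acts orthogonally. We also fix a $G$-invariant exhaustion $K_1\subset K_2\subset\cdots$ of $X$ by compact smoothly bounded domains, a point $p\in K_1$, and radii $r_k\nearrow 1$ with $\sum_k\sqrt{r_{k}^2-r_{k-1}^2}<\infty$ chosen as in the cited paper.

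The inductive step is the key lemma. Suppose $F_{k-1}$ is $G$-equivariant, maps $\overline X$ into $\B^n$, and satisfies $|F_{k-1}|>r_{k-1}$ on $\overline X\setminus K_{k-1}$. We want $F_k$ with the following properties:
\begin{enumerate}[\rm (i)]
\item $F_k$ is $G$-equivariant and $\epsilon_k$-close to $F_{k-1}$ on $K_{k-1}$;
\item $r_k<|F_k|<1$ on $bX$, and $|F_k|>r_{k-1}-\epsilon_k$ on $\overline X\setminus K_{k-1}$;
\item $\dist_{F_k}(p,bX)>k$.
\end{enumerate}

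To construct it, we cover $bX$ by finitely many small arcs $I_j$ forming a collection of full $G$-orbits. This is possible because $G$ acts freely on $bX$, so each orbit of an arc consists of $|G|$ disjoint arcs. For one representative arc $I$ in each orbit we apply the Riemann--Hilbert modification of Subsection~\ref{sec:RH}. The attached discs lie in the affine planes orthogonal to $F_{k-1}(x)$, which pushes $|F|$ up to about $\sqrt{r_{k-1}^2+\delta^2}$ while staying in $\B^n$. We then transport the same correction to the arcs $gI$ by $g$ itself: we require that the boundary data satisfy $F(gx)=gF(x)$ and that the centres $F_{k-1}(gx)=gF_{k-1}(x)$ be compatible, so that the correcting discs at $gx$ are $g$-images of those at $x$.

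Completeness is handled by the usual labyrinth-type or Jorge--Xavier-type increase of intrinsic diameter, again performed orbit-wise on compact sets in $X\setminus K_{k-1}$ that are disjoint from their translates. This gives (iii).

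The main obstacle is to make the approximate solution, which is produced by non-equivariant Runge--Mergelyan approximation of the spinor or derivative data, exactly $G$-equivariant while keeping the period and flux conditions and the immersion property. Following \cite{Forstneric2024PLMS}, we would first average: replace $\Phi=\di F$ by $\frac1{|G|}\sum_g g^{-1}\circ(g^*\Phi)$. This is equivariant and close to $\Phi$ on the region where $\Phi$ was already nearly equivariant. It remains nowhere vanishing and null after a small correction, which we obtain from the $G$-equivariant period-dominating sprays of Section~\ref{sec:prelim}. Averaging preserves exactness and closeness of the real part on $K_{k-1}\cup bX$, because each arc in an orbit is mapped to the others by $G$ and the modifications were already placed equivariantly. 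Condition (a) is used near the points of $X_0$, all of which lie in $X$ and away from $bX$, to keep nonvanishing there. Nondegeneracy is ensured at the first step and preserved by small perturbations.

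In the limit, properness follows from (ii) because $r_k\to1$, completeness from (iii), and the limit $F$ is continuous up to $\overline X$ with $F(bX)\subset b\B^n$ since the $F_k$ converge uniformly on $\overline X$. Equivariance passes to the limit.
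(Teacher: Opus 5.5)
Your overall architecture matches the paper's: radii $r_k\nearrow 1$ with $\sum_k\sqrt{r_k^2-r_{k-1}^2}<\infty$, boundary lifting by Riemann--Hilbert discs in planes orthogonal to $F(x)$ with frames transported by $G$, and an interleaved increase of intrinsic diameter. But the step you single out as the main obstacle is settled by an argument that fails.

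Averaging the derivative data, $\frac{1}{|G|}\sum_g g^{-1}\circ(g^*\di F)$, amounts to replacing $F$ by $\frac{1}{|G|}\sum_g g^{-1}\circ F\circ g$. This map is harmonic and $G$-equivariant, but in general it is not conformal. The null quadric $\bA$ is not convex: $(1,\imath,0)$ and $(1,0,\imath)$ are null, while their sum $(2,\imath,\imath)$ is not. The $G$-equivariant sprays of \cite{Forstneric2024PLMS} cannot repair this, because all their members already take values in $Y=\bA_*\cup Y_0$. They are used to glue such maps and to adjust periods and the orbit integrals \eqref{eq:gv}; a map that has left the quadric is not a member of any of them. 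To rescue averaging you would need at least a $G$-equivariant holomorphic retraction of a neighbourhood of $\bA_*$ onto $\bA_*$, followed by a new equivariant period correction. The non-equivariant glued map would also have to be uniformly almost equivariant, which already forces you to build the local data equivariantly.

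The missing idea is that the paper never leaves the equivariant category:
\begin{itemize}
\item $G$-equivariant maps $X\to Y$ are identified with sections of $Z=(X\times Y)/G\to X/G$ (Lemma \ref{lem:correspondence});
\item the local Riemann--Hilbert data are built on one disc per $G$-orbit of arcs and transported by \eqref{eq:fj};
\item the gluing is done with $G$-equivariant sprays over the quotient;
\item equivariance of the integral is secured by \eqref{eq:RHperiods2}.
\end{itemize}
Together these give Theorem \ref{th:RH}.

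There are further gaps.
\begin{itemize}
\item Theorem \ref{th:RH} requires pairwise disjoint arcs and a function $r$ supported on them. So $r$ vanishes at the arc endpoints and nothing is lifted there, and your overlapping cover of $bX$ does not satisfy the hypotheses. The paper first pushes the endpoints $x_i$ out to $\lambda_iF(x_i)$ using the new equivariant exposing theorem (Theorem \ref{th:exposing} and Corollary \ref{cor:curves}, whose lift from $\overline X/G$ uses freeness of the action on $bX$). Only then does it apply Riemann--Hilbert on the arcs (Lemma \ref{lem:proper}). A second Riemann--Hilbert pass on a shifted family of arcs could serve instead, but you must carry it out.
\item Labyrinth or Jorge--Xavier type completeness arguments typically keep a coordinate direction unchanged during the Runge step. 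You give no way to make this compatible with an arbitrary orthogonal $G$-action, and the paper notes the same obstruction for the Alarc\'on--L\'opez method. The paper instead proves Lemma \ref{lem:equivariant-distance-2}, using oscillating exposed arcs and then Riemann--Hilbert discs orthogonal to $F-\Phi$. Its modifications are uniformly small on $\overline X$, so the properness estimates survive.
\item Your induction must start with $0\notin F_0(bX)$. Zeros of an equivariant map come in $G$-orbits and cannot be removed by translation unless $G$ fixes a nonzero vector; this is why the paper needs Lemma \ref{lem:nozeros}.
\item You should record a uniform bound such as $\|F_k-F_{k-1}\|_{0,\overline X}<\sqrt{r_{k+1}^2-r_{k-1}^2}$ (Lemma \ref{lem:proper}(a), obtained from \eqref{eq:mu} and the maximum principle). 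This is what gives convergence of $F_k$ on $\overline X$, and it does not follow from your (i)--(iii).
\end{itemize}
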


Theorems \ref{th:complete} and \ref{th:complete-proper}
contribute to the circle of results on the 
famous Calabi--Yau problem for minimal surfaces; see 
\cite[Chap.\ 7]{AlarconForstnericLopez2021} for a discussion of this topic. 
Their proof provides a general construction of complete bounded minimal 
surfaces in $\R^n$ with a prescribed conformal structure and a given group 
of symmetries, thereby opening a new direction in the theory. 
Without $G$-equivariance, Theorem \ref{th:complete-proper} 
is a special case of 
\cite[Theorem 1.2]{AlarconDrinovecForstnericLopez2015PLMS}
which gives the analogous result with the ball replaced by an arbitrary
strongly convex domain with $\Cscr^2$ boundary in $\R^n$, $n\ge 3$. 
See also \cite{AlarconDrinovecForstnericLopez2019TAMS} and the 
exposition in \cite[Secs.\ 8.3--8.5]{AlarconForstnericLopez2021}
where similar results were proved for minimal surfaces in mean 
convex domains in $\R^3$ and $(n-2)$-convex domains in 
$\R^n$ for $n\ge 4$.

%
%
\begin{remark}\label{rem:flux}
Let $H_1(X,\Z)$ denote the first homology group of $X$.
The flux homomorphism $\Flux_F:H_1(X,\Z)\to\R^n$ 
of a conformal minimal immersion $F:X\to\R^n$ is defined by 
\begin{equation}\label{eq:flux}
	\Flux_F([C]) = \oint_C d^c F,\quad [C]\in H_1(X,\Z),
\end{equation}
where $d^c$ is the conjugate differential on $X$.
(See \cite[Definition 2.3.2]{AlarconForstnericLopez2021}.) 
An action of a group $G$ on $X$ induces an action of $G$ 
on $H_1(X,\Z)$. If a conformal minimal immersion $F:X\to\R^n$
is $G$-equivariant then clearly so is its flux homomorphism.
In Theorems \ref{th:proper}, \ref{th:complete}, and \ref{th:complete-proper} 
one can also control the flux homomorphism of a $G$-equivariant 
conformal minimal immersion to be any given $G$-equivariant 
homomorphism $H_1(X,\Z)\to \R^n$. This will be clear from the proofs;  
see also \cite[Remark 5.2]{Forstneric2024PLMS}.
\end{remark}

The analogues of Theorems \ref{th:proper} and \ref{th:complete} 
also hold for {\em holomorphic null curves}, that is, 
holomorphic immersions $H=(H_1,\ldots,H_n):X\to \C^n$, $n\ge 3$, 
whose differential $dH$ satisfies the nullity condition 
\[
	\sum_{i=1}^n (dH_i)^2=0.
\]
Let $O(n,\C)=\{A\in GL(n,\C):AA^t =I\}$ denote the complex
orthogonal group on $\C^n$. The null quadric $\bA\subset \C^n$ 
\eqref{eq:bA} is $O(n,\C)$-invariant, and the group $O(n,\C)$
acts transitively on $\bA_*=\bA\setminus \{0\}$. 
For reference, we recall the following analogue of Theorem \ref{th:FF}; 
see \cite[Theorem 5.3]{Forstneric2024PLMS}. 
The special case with $G$ the trivial group is given by 
\cite[Theorem 3.6.1]{AlarconForstnericLopez2021}. 

%
%
\begin{theorem}[Theorem 5.3 in \cite{Forstneric2024PLMS}] \label{th:FFnull}
Let $G$ be a finite group acting effectively on a connected open 
Riemann surface $X$ by holomorphic automorphisms, and acting on 
$\C^n$ $(n\ge 3)$ by complex orthogonal transformations in $O(n,\C)$. 
Assume that for every point $x\in X$ with nontrivial stabiliser $G_x$ 
there is a $G_x$-invariant complex line $L_x\subset \bA \subset\C^n$ 
and a $G_x$-equivariant $\C$-linear isomorphism $T_xX\to L_x$.
Then there exists a nondegenerate $G$-equivariant holomorphic 
null immersion $H:X\to \C^n$.
\end{theorem}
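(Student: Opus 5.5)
The plan is to follow the scheme behind Theorem \ref{th:FF}, replacing conformal minimal immersions by their holomorphic counterparts. We construct an equivariant holomorphic $1$-form $\partial H$ with values in $\bA_*$, make it equivariant and exact, and integrate. Write $\Phi=dH$. Since $G$ acts on $\C^n$ linearly, equivariance of $H$ amounts to $g^*\Phi=g\,\Phi$ for all $g\in G$, together with a normalisation of the base point. A holomorphic null immersion is equivalent to a nowhere vanishing holomorphic $1$-form $\Phi=f\theta$, with $\theta$ a fixed nowhere vanishing holomorphic $1$-form and $f:X\to\bA_*$. It must also have vanishing periods: $\oint_C\Phi=0$ for all $[C]\in H_1(X,\Z)$.

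First we reduce to the quotient. The orbit space $X'=X/G$ is an open Riemann surface, and $\pi:X\to X'$ is a branched covering ramified exactly over $\pi(X_0)$. Away from $X_0$, $G$-equivariant maps $f$ correspond to sections of the flat bundle $X\times_G\bA_*\to X\setminus X_0$ (with the twist by the action of $g^*$ on $\theta$ absorbed by averaging a $G$-equivariant choice of $\theta$ as a section of $K_X$). At a point $x\in X_0$, the hypothesis gives a $G_x$-invariant null line $L_x$ and a $G_x$-equivariant isomorphism $\lambda_x:T_xX\to L_x$. We prescribe $\Phi_x=\lambda_x$, which is the local model $H(z)=\lambda_x(z)$ in the coordinate \eqref{eq:rotation}. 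This is exactly the compatibility needed for an equivariant $\Phi$ near $x$ that is nonzero at $x$. Since the fibre $\bA_*$ is an Oka manifold (it is homogeneous under $O(n,\C)$), the equivariant Oka principle on the Stein quotient, applied as in \cite{Forstneric2024PLMS}, gives a $G$-equivariant holomorphic $f:X\to\bA_*$ with these prescribed jets at $X_0$.

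Next we kill periods equivariantly, which we expect to be the main obstacle. We pick a $G$-invariant Runge family of curves generating $H_1(X,\Z)$ and choose them to avoid $X_0$. We use equivariant period-dominating sprays: take $f_t=\phi^1_{t_1}\circ\cdots\circ\phi^N_{t_N}(f)$ using flows of linear vector fields on $\bA$ from the Lie algebra of $O(n,\C)$, with holomorphic multipliers $h_j$. These are averaged over $G$ via $h\mapsto\sum_g g^{-1}\cdot(h\circ g)$ so that equivariance is preserved, and the multipliers are made to vanish to high order at $X_0$ so that the jets are fixed. The period map is $G$-equivariant with values in the $G$-module $\mathrm{Hom}_G(H_1(X,\Z),\C^n)$. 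Dominance onto this space follows from the non-equivariant dominance by averaging, because $\Phi$ is nondegenerate and equivariant projection of a surjection onto an invariant subspace remains surjective. The implicit function theorem then gives an equivariant $f_t$ with zero periods, and we proceed inductively on an exhaustion by $G$-invariant Runge domains with Mergelyan approximation, as in the proof of Theorem \ref{th:FF}.

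Finally we integrate. Fix a point $p$; then $H(x)=\int_p^x\Phi+c$. Equivariance gives $H(gx)-gH(x)=\int_{p}^{gp}\Phi+c-gc$, which is constant in $x$ and defines a cocycle $G\to\C^n$. For a finite group this cocycle is a coboundary: choosing $c=-\frac1{|G|}\sum_g g^{-1}\int_p^{gp}\Phi$ makes $H$ equivariant. Nondegeneracy is obtained by including in the sprays enough directions so that $f(X)$ spans $\C^n$, which is an open and generic condition preserved in the limit.
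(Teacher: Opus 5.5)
\textbf{There is a genuine gap at the branch points $X_0$.} Your overall scheme (Oka principle over $X/G$, equivariant period-dominating sprays, integration) is the one behind Theorem \ref{th:FF}. But $X_0$ is the only place where the hypothesis on $L_x$ enters, and your construction does not handle it.

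A $G$-invariant holomorphic $1$-form must vanish at every $x\in X_0$. Indeed, if $g$ generates $G_x$ and $dg_x$ is multiplication by $\E^{2\pi\imath/k}$, $k=|G_x|$, then $\theta_x=\theta_x\circ dg_x=\E^{2\pi\imath/k}\theta_x$. Averaging a nowhere vanishing form does not help; it can even give $0$, e.g.\ $dz$ under $z\mapsto -z$. The right invariant form is $\theta=\pi^*\tilde\theta$ with $\tilde\theta$ nowhere vanishing on $X/G$, such as $dh$ in \eqref{eq:h}; it vanishes exactly on $X_0$, to order $k-1$.

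So with $\Phi=f\theta$ and $f$ plainly $G$-equivariant, there are only two possibilities at $x\in X_0$. Either $f(x)\in\bA_*$, so $\Phi_x=0$ and $H$ is branched at $x$. Or $\Phi_x=\lambda_x\ne 0$, so $f$ has a pole at $x$. In particular, the datum $\Phi_x=\lambda_x$ is not a jet of any equivariant $f:X\to\bA_*$.

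Worse, plain equivariance forces $f(x)$ to be a $G_x$-fixed vector in $\bA_*$, and such vectors need not exist. Take $G=\Z_2$ acting on $\D$ by $z\mapsto -z$ and on $\C^3$ by $-\Id$. The hypothesis holds, and $H(z)=z(1,\imath,0)$ is an equivariant null immersion. Yet no equivariant $f:\D\to\bA_*$ exists, because $f(0)=-f(0)$. Thus the map your Oka-principle step is supposed to deliver does not exist in general.

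\textbf{What is missing} is a fibre-bundle formulation that lets $\Phi$ be nonzero on $X_0$. There are two ways to get one.
\begin{enumerate}
\item The paper lets $f=\Phi/\theta$ take values in the projective closure $Y=\bA_*\cup Y_0$ of \eqref{eq:Y}, with $f(X_0)\subset Y_0$. The pole of order $k-1$ then cancels the zero of $\theta$; the local model is \eqref{eq:f0} with $y_0\in L_x\setminus\{0\}$. It applies the Oka principle to sections of the ramified map $\rho\colon(X\times Y)/G\to X/G$ sending $X_1/G$ into $\Omega=(X_1\times\bA_*)/G$ (Lemma \ref{lem:correspondence}). It interpolates $f_0$ at $X_0$ to an order that preserves pole orders and leading coefficients.
\item Alternatively, keep a nowhere vanishing, non-invariant $\theta$ and twist the fibre action by the cocycle $c_g(x)=(\theta_{gx}\circ dg_x)/\theta_x$, requiring $f(gx)=c_g(x)^{-1}gf(x)$. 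At $x\in X_0$ the fixed-point condition then becomes $gf(x)=\E^{2\pi\imath/k}f(x)$. That is, $f(x)$ spans a null line on which $g$ acts as $\E^{2\pi\imath/k}$, which is exactly what the hypothesis supplies.
\end{enumerate}

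After this repair your outline is sound, with two further corrections.
\begin{itemize}
\item The equivariant sprays should have the form $\exp(M_t)f$, with multipliers $M_t$ taking values in the Lie algebra of $O(n,\C)$ and satisfying $M_t(gx)=gM_t(x)g^{-1}$. Averaging scalar multipliers of a composition of flows does not by itself give equivariance.
\item In the final step your constant has the wrong sign. For the cocycle $a_g=\int_p^{gp}\Phi$ one has $g^{-1}a_g=-a_{g^{-1}}$, so your $c$ equals $\frac1{|G|}\sum_g a_g$, whereas $c=-\frac1{|G|}\sum_g a_g$ is needed.
\end{itemize}
With the sign fixed, using $H^1(G,\C^n)=0$ to choose the constant is a legitimate shortcut for this existence statement. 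It replaces the orbit conditions \eqref{eq:gv}, which are needed only when values of the integral are interpolated.
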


Part (a) of the following result is an analogue of Theorem \ref{th:proper} 
for $G$-equivariant holomorphic null curves. Part (b) is an analogue
of a special case of Theorem \ref{th:complete}.

%
%
\begin{theorem} \label{th:NC}
Let $X$ and $G$ be as in Theorem \ref{th:proper}.
Assume that $G$ also acts on $\C^n$, $n\ge 3$, by 
elements of $O(n,\C)$ and the condition on the stabilisers 
in Theorem \ref{th:FFnull} is satisfied. Then the following hold.
\begin{enumerate}[\rm (a)]
\item There exists a proper $G$-equivariant 
holomorphic null immersion $H:X\to \C^n$ whose image $H(X)$ is not 
contained in any affine hyperplane of $\C^n$.
\item 
There exists a continuous $G$-equivariant map $H: \overline X\to\C^n$ 
whose restriction $H|_{X}: X\to\C^n$ is a bounded 
complete holomorphic null immersion such that $H(X)$ is not contained
in any affine hyperplane of $\C^n$.
\end{enumerate}
\end{theorem}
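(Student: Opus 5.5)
We follow the same scheme as for Theorems \ref{th:proper} and \ref{th:complete}, with conformal minimal immersions replaced by holomorphic null curves. The tools needed are the null-curve versions of the exposing-of-boundary-points technique and of the Riemann--Hilbert modification method, which exist in the non-equivariant case (see \cite{AlarconDrinovecForstnericLopez2015PLMS} and \cite[Chap.\ 3, 6]{AlarconForstnericLopez2021}), together with the averaging device of \cite{Forstneric2024PLMS} that produces $G$-equivariant objects. By Remark \ref{rem:XG}, we take $\overline X$ to be a $G$-invariant compact domain with real analytic boundary in an open Riemann surface $R$ on which $G$ acts. Since $G$ acts freely on $bX$, the boundary splits into finitely many free $G$-orbits of arcs.

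\textbf{Starting curve.} Theorem \ref{th:FFnull} applied on a $G$-invariant neighbourhood of $\overline X$ in $R$ gives an initial nondegenerate $G$-equivariant holomorphic null immersion $H_0$. Theorem \ref{th:FFnull} is proved by building $dH$ as a $G$-equivariant holomorphic $1$-form with values in $\bA_*$: one passes to the quotient $X/G$, away from the finite set $X_0$, and uses the stabiliser condition locally at the points of $X_0$.

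\textbf{Inductive step.} Each step modifies $H_k$ only near $bX$, away from $X_0$, which is a finite $G$-invariant set contained in $X$. Here we use that $G$ acts freely on $bX$.

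\begin{enumerate}[\rm (1)]
\item Choose finitely many points on $bX$ together with small discs or arcs around them whose $G$-translates are pairwise disjoint.
\item Perform the non-equivariant null-curve operation (exposing points, or a Riemann--Hilbert deformation along null discs) on one representative of each $G$-orbit.
\item Transport this operation to the other translates by the rule $H(gx)=gH(x)$.
\end{enumerate}

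On the fundamental domain the operation lives in a small neighbourhood. Its $G$-translates are disjoint, and $G$ acts by elements of $O(n,\C)$, which preserve $\bA$. Hence the transported data are again null and compatible.

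\textbf{Gluing.} The local modifications are then glued into a global $G$-equivariant null curve. We correct periods and approximate on the interior by an equivariant Runge/Mergelyan theorem for $G$-equivariant $\bA_*$-valued $1$-forms. The period conditions on $H_1(X,\Z)$ are $G$-equivariant. As in \cite{Forstneric2024PLMS}, we use $G$-equivariant dominating period sprays and choose parameters in the subspace of $G$-invariant parameters, so that the period map restricted to it remains submersive onto the $G$-equivariant period space.

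\textbf{Part (a).} At step $k$ we make $|H_{k+1}|>k$ on a $G$-invariant collar of $bX$ while approximating $H_k$ on a larger $G$-invariant compact subset of $X$. We use exposing of points followed by the Riemann--Hilbert method, pushing each boundary orbit outward in a suitable complex direction, exactly as in the non-equivariant proof that bordered surfaces admit proper null embeddings into $\C^n$. The limit $H=\lim H_k$ is proper and $G$-equivariant. Nondegeneracy is an open condition, so it survives if the approximation is close enough on a fixed compact set.

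\textbf{Part (b).} We increase the intrinsic boundary distance by a fixed amount at each step while moving $H_k$ only slightly in $\Cscr^0(\overline X)$. The tool is the Riemann--Hilbert technique with null discs of small size, as in \cite[Theorem 1.1]{AlarconDrinovecForstnericLopez2015PLMS}. The resulting sequence converges uniformly on $\overline X$ to a continuous $G$-equivariant map. Its restriction to $X$ is a complete bounded null immersion.

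\textbf{Main obstacle.} The hardest part is keeping equivariance through the gluing and period-correction step. Orbit-by-orbit local modifications on $\overline X$ yield an approximate null curve on a neighbourhood of $\overline X$. Fixing it requires solving the period problem and the approximation problem within the class of $G$-equivariant null maps. I would first try averaging the correcting sprays over $G$, that is, setting $\tilde f(x)=|G|^{-1}\sum_g g^{-1}f(gx)$ on $1$-forms with values in $\C^n$. This fails because averaging does not preserve the quadric $\bA$. So instead I would construct sprays on the quotient $(X\setminus X_0)/G$ with values in $\bA_*$ and lift them, keeping them fixed near $X_0$, where the stabiliser condition was already used to define $H_0$.
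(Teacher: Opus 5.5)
Your overall architecture agrees with the paper's: local data on one representative of each free boundary orbit, transported by $G$; gluing and period correction through $G$-equivariant sprays, i.e.\ sections of $Z\to X/G$; and the null Riemann--Hilbert method of Corollary~\ref{cor:RH-null}. The obstacle you single out, equivariant period correction, is handled in the paper essentially as you suggest. However, the plan misses the one point that is genuinely new for null curves. A finite group $G\subset O(n,\C)$ need not act unitarily on $\C^n$; indeed $O(n,\C)\cap U(n)=O(n,\R)$. Your transport rule $H(gx)=gH(x)$ preserves $\bA$, but not the standard Hermitian norm or orthogonality. Both inductions rely on exactly these, so ``exactly as in the non-equivariant proof'' does not go through.

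In the boundary-lifting step for (a) (cf.\ Lemma~\ref{lem:proper}), two things are needed:
\begin{enumerate}
\item the exposed endpoints $\lambda_iH(x_i)$ must all lie on one sphere, so that the segments attached at the two ends of each boundary arc are close;
\item the null direction $\bw_I$ of the discs $\xi\mapsto H(x)+\sigma(x,r(x)\xi)\bw_I$ must be Hermitian-orthogonal to $H(z_I)$, so that by Pythagoras the discs do not re-enter the inner ball.
\end{enumerate}
Equivariance forces the endpoint at $gx_i$ to be $\lambda_i\,gH(x_i)$ and the direction on $gI$ to be $g\bw_I$. When $g$ is not unitary, $|gH(x_i)|\neq|H(x_i)|$ and $g\bw_I$ is in general not orthogonal to $gH(z_I)=H(z_{gI})$, so both requirements fail on the translates. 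You get them there only up to the distortion $\max_{g\in G}\|g\|$. Since the lemmas are applied infinitely often with hypotheses on all of $bX$, this loss accumulates.

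For (b) this is decisive. Boundedness rests on the Pythagoras estimate $\|\wt H-\Phi\|_{0,bX}<\sqrt{\delta^2+\eta^2}$ of Lemma~\ref{lem:equivariant-distance-2}, which requires $g\bw_I\perp g\bigl(H(z_I)-\Phi(z_I)\bigr)$ on every translate. With only triangle-inequality control the $\Cscr^0$ displacements grow like $\sum_j\eta_j$, and that sum must diverge to produce completeness. (The induced metric $|dH|^2$ is not $G$-invariant either.)

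The missing idea, which is precisely the content of the paper's proof, is to use the averaged Hermitian product $\langle u,v\rangle_G=\sum_{g\in G}\langle gu,gv\rangle_{\mathrm{st}}$. Then $G$ acts unitarily, and balls, shells and orthogonality become $G$-invariant. The lifting and distance-increasing arguments go through verbatim with $\bw_I\in\bA_*$ chosen $\langle\cdot,\cdot\rangle_G$-orthogonal to $H(z_I)$ (resp.\ to $H(z_I)-\Phi(z_I)$). Then $\bw_{gI}=g\bw_I$ is automatically orthogonal to $gH(z_I)$. Such $\bw_I$ exist because for $n\ge 3$ a complex hyperplane through $0$ meets the cone $\bA$ in a set of dimension at least $n-2\ge 1$. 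The $G$-norm and its metric are equivalent to the Euclidean ones, so properness, boundedness and completeness transfer back.

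Alternatively, a finite subgroup of $O(n,\C)$ is conjugate inside $O(n,\C)$ into the maximal compact subgroup $O(n,\R)$, and such a conjugation preserves $\bA$, so you could reduce to real orthogonal $G$.

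One minor further omission: to start the lifting in (a) you need an initial curve with $0\notin H(bX)$, obtained by a general-position argument as in Lemma~\ref{lem:nozeros}. Translating $H$ is not allowed, since $H+v$ is equivariant only if $v$ is $G$-fixed.
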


Part (b) contributes to the {\em Yang problem} concerning 
the existence and properties of complete bounded
complex submanifolds of complex Euclidean spaces.
See \cite{Alarcon2024Yang} for a recent survey.

%
%
The paper is organised as follows. 
In Section \ref{sec:prelim} we outline the proof of Theorem \ref{th:FF}
given in \cite{Forstneric2024PLMS}. 
The same general scheme but with suitable technical additions is 
then used to prove Theorems \ref{th:proper}, \ref{th:complete}, and  
\ref{th:complete-proper} in Section \ref{sec:proof} via recursive applications 
of the main properness and completeness lemmas in Section 
\ref{sec:lemmas}. The same methods yield the proof of Theorem 
\ref{th:NC} also given in Section \ref{sec:proof}. The necessary additions 
to establish the main lemmas include $G$-equivariant versions 
of the technique of exposing boundary points of bordered Riemann surfaces
(see Theorem \ref{th:exposing} in Subsection \ref{sec:exposing}), and of the 
Riemann--Hilbert deformation method for conformal minimal surfaces 
and holomorphic null immersions (see Theorem \ref{th:RH} and Corollary 
\ref{cor:RH-null} in Subsection \ref{sec:RH}). 
This classical complex analytic technique for solving certain boundary 
value problems for holomorphic maps 
was first introduced to the subject of conformal 
minimal surfaces by the authors in \cite{AlarconForstneric2015MA}, 
where it was shown that every bordered Riemann surface admits a 
complete bounded conformal minimal immersion in $\R^3$. 
In the subsequent paper \cite{AlarconDrinovecForstnericLopez2015PLMS}, 
the Riemann--Hilbert technique was extended to minimal surfaces in 
$\R^n$ for arbitrary $n\ge 3$, and it was used to give 
complete bordered minimal surfaces with boundaries consisting 
of Jordan curves. (The boundary curves are necessarily nonrectifiable 
at every point.) Soon thereafter, the technique was improved for $n=3$ and
applied in the construction of complete, proper conformal minimal 
immersions of bordered Riemann surfaces to strongly minimally 
convex domains in $\R^3$ (in particular, to strongly convex domains); 
see \cite{AlarconDrinovecForstnericLopez2019TAMS}.
A detailed presentation of this method, and 
of its applications to the Calabi--Yau problem, can be found in 
\cite[Chaps.\ 6--7]{AlarconForstnericLopez2021}.

%
%
\section{Outline of proof of Theorem \ref{th:FF}}\label{sec:prelim} 

In this section we outline the proof of Theorem \ref{th:FF};
details can be found in \cite{Forstneric2024PLMS}.
Together with the additions in subsequent sections, 
this will serve to prove the new results stated in the Introduction.

We begin by establishing the notation.
Let $X$ be a bordered Riemann surface, and assume that $G$
is a finite group acting on $X$ by holomorphic automorphisms 
and acting on $\R^n$ by orthogonal transformations. By $\CMI(X,\R^n)$
we denote the space of all conformal minimal immersions
$X\to\R^n$. (This notation is used in 
\cite{AlarconForstnericLopez2021} and several other sources.) 
It contains the subset 
\[
	\CMI_G(X,\R^n) \subset \CMI(X,\R^n)
\]
of all $G$-equivariant conformal minimal immersions. 
Given $r\in \N=\{1,2,\ldots\}$, we denote by 
\begin{equation}\label{eq:CMIG}
	\CMI_G^r(\overline X,\R^n) \subset \CMI^r(\overline X,\R^n)
\end{equation}
the spaces of maps $X\to\R^n$ of this type which extend to immersions 
$\overline X\to\R^n$ of class $\Cscr^r$. Adding a star as a subscript 
will denote the spaces of nonflat maps of the respective types:
\begin{equation}\label{eq:CMIG*}
	\CMI_{G,*}^r(\overline X,\R^n) \subset \CMI_*^r(\overline X,\R^n).
\end{equation}

We now recall the setup used in the proof of Theorem \ref{th:FF} in 
\cite{Forstneric2024PLMS}. As explained in the introduction, 
\[ 
	X_0= \{x\in X: G_x\ne \{1\}\} 
\] 
is a closed, discrete, $G$-invariant subset of $X$, and 
$G$ acts freely on its complement 
\[
	X_1:=X\setminus X_0 =
	\{x\in X: gx\ne x\ \text{for all}\ g\in G\setminus \{1\}\}. 
\]
The orbit space $X/G$ is an open Riemann surface, the 
quotient projection 
\[
	\pi:  X \to X/G
\]
is a holomorphic map with branch locus $X_0$, and 
the restriction $\pi|_{X_1}: X_1 \to X_1/G$ is a holomorphic 
covering projection of degree $|G|$. Choose a holomorphic immersion 
$\tilde h:X/G\to \C$ (such exists by \cite{GunningNarasimhan1967}). 
The holomorphic function
\begin{equation}\label{eq:h}
	h=\tilde h\circ \pi:X\to \C
\end{equation}
is $G$-invariant (that is, $h\circ g=h$ for all $g\in G$) and its branch locus  
equals $X_0$. By the chain rule, the holomorphic $1$-form $\theta=dh$ 
on $X$ satisfies the following condition for every $g\in G$:
\begin{equation}\label{eq:theta}
	\theta_{gx} \circ dg_x= \theta_x \ \ \text{for all $x\in X$},\quad 
	\{\theta=0\}=X_0. 
\end{equation} 

The implication (b)$\Rightarrow$(a) in Theorem \ref{th:FF} is seen 
as follows. Let $F:X\to \R^n$ be a $G$-equivariant immersion. 
Differentiating the equation $g\circ F=F\circ g$ at $x\in X$ 
and using $gx=x$ gives  
\begin{equation}\label{eq:gdFx}
	g\circ dF_{x} = dF_{gx} \circ dg_{x}=dF_{x} \circ dg_{x}:
	T_{x}X \to \R^n.
\end{equation}
(The first equality holds for every $x\in X$ and $g\in G$.)
Hence, the differential $dF_x:T_xX\to \R^n$ is a $G_x$-equivariant linear
embedding onto its image $dF_{x}(T_{x}X)\subset \R^n$, and 
$dF_x$ is conformal if $F$ is such. Taking the $\C$-linear parts gives
\begin{equation}\label{eq:diFequivariant}
	g\circ \di F_{x} = \di F_{gx} \circ dg_{x}=\di F_{x} \circ dg_{x}.
\end{equation}
Here, $\di$ is the $\C$-linear component of the exterior differential $d$,
so $d=\di+\dibar$. 

In the remainder of this section, we outline the proof of the 
implication (a)$\Rightarrow$(b) in Theorem \ref{th:FF}. 
We introduce the following notation:
\begin{eqnarray}
	\bA &=& \{z=(z_1,\ldots,z_n)\in \C^n: z_1^2+z_2^2+\cdots+z_n^2=0\}
	\ \ \text{the null quadric}; 
	\label{eq:bA} 
	\\
	\bA_* &=& \bA\setminus \{0\}\ \ \text{the punctured null quadric}; 
	\label{eq:bA*} 
	\\
	\overline \bA &=& 
	\text{the closure of $\bA\subset \C^n$ in 
	the projective compactification $\CP^n=\C^n\cup \CP^{n-1}$}; 
	\label{eq:overbA} \\
	Y     &=& \overline \bA\setminus \{0\} = \bA_*\cup Y_0; 
	\label{eq:Y} \\
	Y_0 &=& Y\setminus \bA_*=\bigl\{[z_1:\cdots:z_n] \in 
	\CP^{n-1}: z_1^2+z_2^2+\cdots+z_n^2=0\bigr\}.
	\label{eq:Y0}
\end{eqnarray}
The projection 
\[
	p:\bA_*\to Y_0,\quad p(z_1,\ldots,z_n)=[z_1:\cdots:z_n]
\] 
is a holomorphic $\C^*$-bundle. Extending $p$ to $Y$ as the identity on 
$Y_0$ gives a holomorphic line bundle projection $p:Y\to Y_0$ 
with the zero section $Y_0$. 
The actions of the real and the complex orthogonal 
groups $O(n,\R)\subset O(n,\C)$ on $\C^n$ 
extend to $\CP^n$, with $\bA_*$ and the hyperplane at infinity 
$\CP^n\setminus \C^n\cong \CP^{n-1}$ as invariant submanifolds. 
To every oriented two-dimensional linear subspace $\Lambda\subset\R^n$
we associate a complex null line $L=L(\Lambda) \subset \bA\subset \C^n$ 
by choosing an oriented basis $(u,v)$ of $\Lambda$ such that 
$\|u\|=\|v\|\ne 0$ and $u\,\cdotp v=0$ (such a pair $(u,v)$ is called a 
conformal frame) and setting
\begin{equation}\label{eq:L}
	L(\Lambda) = \C(u-\imath v) \subset \bA\subset \C^n.
\end{equation}
A rotation $R_\phi$ on $\Lambda$ for the angle $\phi$ in the positive
direction corresponds to the multiplication by $\E^{\imath \phi}$ on the 
complex null line $L(\Lambda)$. 

Fix a point $x_0\in X$. By the Weierstrass representation formula,
every conformal minimal immersion $F:X\to\R^n$ is of the form
\begin{equation}\label{eq:F}
	F(x)=v + \Re \int_{x_0}^x f\theta, \qquad x\in X,
\end{equation}
where $v=F(x_0)\in\R^n$ and 
\[
	f=2\di F/\theta: X\to Y = \bA_*\cup Y_0
\]
is a holomorphic map such that $f^{-1}(Y_0)=X_0$,   
the holomorphic $1$-form $f\theta=2\di F$ has no zeros or poles, and 
it satisfies the real period vanishing conditions 
\begin{equation}\label{eq:PV}
	\Re \int_C f\theta = 0\quad \text{for every $[C]\in H_1(X,\Z)$}.
\end{equation}
If in addition $F$ is $G$-equivariant then the map 
$f:X\to Y$ is also $G$-equivariant:
\begin{equation}\label{eq:fg}
	f(gx) = \frac{2\di F_{gx}}{\theta_{gx}} 
		= \frac{2\di F_{gx}\circ dg_x}{\theta_{gx} \circ dg_x}
		\stackrel{\eqref{eq:diFequivariant},\eqref{eq:theta}}{=}
		 \frac{g\, 2\di F_x}{\theta_x} = g\, f(x), \quad g\in G.
\end{equation}
The $G$-equivariance condition on the map $F$ \eqref{eq:F} 
on the orbit $Gx_0$ also gives 
\begin{equation}\label{eq:gv}
	gv = gF(x_0) = F(gx_0)=v+ \Re \int_{x_0}^{gx_0} f\theta, 
	\quad g\in G,
\end{equation}
where the integral is taken over any path from $x_0$ to $gx_0$. 
Conversely, if $f:X\to Y$ is a $G$-equivariant holomorphic map 
such that $f\theta$ is a holomorphic $\C^n$-valued $1$-form on 
$X$ without zeros (this means that the zeros of $\theta=dh$
\eqref{eq:h} at the points of $X_0$ exactly cancel the poles of $f$)
and the period vanishing conditions \eqref{eq:PV} hold, then the map 
$F$ given by \eqref{eq:F} is a well-defined conformal minimal immersion.
In particular, the real part of the integral in \eqref{eq:F} is independent of 
the choice of a path from $x_0$ to $x$. 
Assuming in addition that $f$ satisfies condition 
\eqref{eq:gv} (ignoring the two middle terms involving $F$), the map $F$ is 
also $G$-equivariant. Indeed, given a path $\gamma:[0,1]\to X$, we have 
for any $g\in G$ that 
\begin{equation}\label{eq:ggamma}
	\int_{g\gamma} f\theta =   
		\int_0^1 f(g\gamma(t)) 
		\,\theta_{g\gamma(t)}(dg_{\gamma(t)} \dot\gamma(t)) \,dt 
		\stackrel{\eqref{eq:theta}}{=} 
		\int_0^1 g f(\gamma(t)) \, \theta_{\gamma(t)} (\dot\gamma(t)) \,dt  
		= g \int_{\gamma} f\theta,
\end{equation}
and hence
\begin{eqnarray*}
	F(gx) & \stackrel{\eqref{eq:F}}{=} &
		 v+ \Re \int_{x_0}^{gx} f\theta 
		 =  \left(v + \Re \int_{x_0}^{gx_0} f\theta \right) + 
		     \Re \int_{gx_0}^{gx} f\theta \\
		 & \stackrel{\eqref{eq:gv},\eqref{eq:ggamma}}{=} & 
		 gv + \Re \left(g \int_{x_0}^{x} f\theta \right) 
		 = g\left( v + \Re \int_{x_0}^{x} f\theta\right)  \\
		 & \stackrel{\eqref{eq:F}}{=} &  gF(x). 
\end{eqnarray*} 

We now recall the two main steps in the proof of Theorem \ref{th:FF}
given in \cite{Forstneric2024PLMS}.

%
%
\medskip
\noindent {\bf Step 1:} There is a $G$-equivariant conformal minimal immersion 
$F_0:V\to\R^n$ from a neighbourhood $V$ of the closed discrete subset 
$X_0\subset X$ \eqref{eq:X0}. 

To see this, fix a point $x_0\in X_0$ and 
set $k=|G_{x_0}|>1$. As explained in the introduction, 
there is a holomorphic coordinate $z$ 
on a disc $x_0\in \Delta\subset X$, with $z(x_0)=0$, such that 
a generator $g_0$ of $G_{x_0}$ acts by  
\[
	g_0 z=\E^{\imath\phi}z,\quad \phi=2\pi/k,\quad k=|G_{x_0}|.
\]
The function $w=z^k$ is a local holomorphic coordinate on 
$X/G$ at $\pi(x_0)$, with $\pi(z)=z^k$ and 
\[
	 h(z)-h(0)=\tilde h(z^k) - \tilde h(0), \qquad 
	 \theta=dh=\tilde h'(z^k) kz^{k-1}\,dz. 
\]
By condition (a) in Theorem \ref{th:FF} there is a
$G_{x_0}$-equivariant conformal linear isomorphism $F_0:\R^2 \to \Lambda$ onto a $2$-plane $\Lambda\subset \R^n$, so $g_0$ acts on $\Lambda$
as the rotation through the angle $\phi=2\pi/k$. 
It follows that $g_0$ acts on the null line $L=L(\Lambda)\subset\C^n$ 
\eqref{eq:L} as multiplication by $\E^{\imath\phi}$. 
Note that $2\di F_0=f_0\theta$ where 
\begin{equation}\label{eq:f0}
	f_0(z) = \frac{y_0}{\tilde h'(z^k) z^{k-1}},
	\qquad y_0\in L\setminus \{0\},\ z\in \Delta.
\end{equation}
Although this function has a pole at $\{z=0\}=\{x_0\}$
in the affine trivialisation of the null line, 
it defines a holomorphic map $f_0:\Delta\to Y$ satisfying 
\[ 
	f_0(x_0)\in Y_0, \qquad f_0(\Delta\setminus\{x_0\})\subset\bA_*. 
\] 
Here, $\bA_*$ is the punctured null quadric \eqref{eq:bA*}.
Moreover, the map $f_0$ is $G_{x_0}$-equivariant by \eqref{eq:fg}.
We extend $F_0$ and $f_0$ by $G$-equivariance to the orbit 
$G\cdotp\Delta$ and perform the same construction on 
all $G$-orbits of $X_0$. This defines a $G$-equivariant map $f_0:V\to Y$ 
on a $G$-invariant neighbourhood $V\subset X$ of $X_0$, 
with $f_0^{-1}(Y_0)=X_0$, 
and a $G$-equivariant conformal minimal immersion $F_0:V\to\R^n$ 
with $2\di F_0=f_0\theta$.


\medskip
\noindent {\bf Step 2:} 
With $F_0$ and $f_0$ as above, we construct a $G$-equivariant 
holomorphic map $f:X\to Y$, which agrees with $f_0$ to sufficiently 
high order along $X_0$, such that $f(X_1)\subset\bA_*$ \eqref{eq:bA*} 
and the conditions \eqref{eq:PV} and \eqref{eq:gv} hold. 
The interpolation order at the points of $X_0$ is chosen so that the 
holomorphic $1$-form $f\theta$ extends across $X_0$ without zeros. 
The map $F:X\to\R^n$ defined by \eqref{eq:F} is then a $G$-equivariant
conformal minimal immersion. In addition, we ensure that 
$F|_{X_0}=F_0|_{X_0}$.

Let $Y$ be given by \eqref{eq:Y}.
Consider the action of $G$ on $X\times Y$ given by 
\[
	g(x,y)=(gx,gy),\quad x\in X,\ y\in Y,\ g\in G.  
\]
The natural projection $X\times Y\to X$ is then $G$-equivariant, 
so it induces a projection 
\begin{equation}\label{eq:Z}
	\rho:Z=(X\times Y)/G \to X/G. 
\end{equation}
Clearly, $G$-equivariant maps $f:X\to Y$ are naturally identified
with sections $\tilde f:X/G \to Z$ of $\rho:Z \to X/G$. 
Note that $Z=(X\times Y)/G$ is a reduced complex space, 
the map $\rho$ \eqref{eq:Z} is holomorphic, 
it is branched over the closed discrete subset $X_0/G$ of $X/G$, 
and the restricted map 
\[
	\rho: Z_1=\rho^{-1}(X_1/G) \to X_1/G,
\] 
where $X_1=X\setminus X_0$, 
is a holomorphic fibre bundle with the fibre $Y=\bA_*\cup Y_0$;
see \eqref{eq:bA*}--\eqref{eq:Y0}. 
(The transition maps of this bundle are given by the 
action of $G$ on $Y$.) Since the submanifold $Y_0 \subset Y$ 
and its complement $Y\setminus Y_0=\bA_*$ are both $G$-invariant, 
$Z_1$ is the disjoint union 
\[
	Z_1 = \left[ (X_1\times \bA_*)/G\right] 
	\sqcup \left[(X_1\times Y_0)/G \right].
\]
The subset  
\[ 
	\Omega:=(X_1\times \bA_*)/G 
\] 
of $Z_1$ is a $G$-invariant Zariski open domain in $Z$ without singularities, 
and its complement $Z\setminus \Omega$ is a closed complex 
subvariety of $Z$ containing $\mathrm{sing}(Z) \cup \mathrm{br}(\rho)$.
The restricted projection 
\begin{equation}\label{eq:rho} 
	\rho:\Omega=(X_1\times \bA_*)/G \to X_1/G
\end{equation} 
is a holomorphic $G$-bundle with fibre $\bA_*$.
The main point of the above construction is summarised
in the following result. 

%
%
\begin{lemma}[Lemma 3.1 in \cite{Forstneric2024PLMS}]
\label{lem:correspondence}
There is a natural bijective correspondence between (continuous or holomorphic)
sections $\tilde f:X/G \to Z$ of the map $\rho:Z\to X/G$ \eqref{eq:Z}
satisfying 
\[
	\tilde f(X_1/G)\subset \Omega=(X_1\times \bA_*)/G
	\ \ \text{and}\ \ 
	\tilde f(X_0/G) \subset (X_0\times Y_0)/G
\]
and (continuous or holomorphic) $G$-equivariant maps $f:X\to Y$
satisfying 
\[
	f(X_1)\subset \bA_*\ \ \text{and}\ \ f(X_0)\subset Y_0.
\]
\end{lemma}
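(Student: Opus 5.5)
The plan is to define the correspondence through graphs. Given a $G$-equivariant map $f:X\to Y$, consider its graph map
\[
	\Gamma_f: X\to X\times Y,\qquad \Gamma_f(x)=(x,f(x)).
\]
Equivariance of $f$ says precisely that $\Gamma_f(gx)=(gx,gf(x))=g\,\Gamma_f(x)$, so $\Gamma_f$ is $G$-equivariant for the diagonal action. It therefore descends to a map $\tilde f:X/G\to Z=(X\times Y)/G$, given by $\tilde f(\pi(x))=[x,f(x)]$, and $\rho\circ\tilde f=\Id$. Thus $\tilde f$ is a section of $\rho$.

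Conversely, take a section $\tilde f$. For $x\in X$, the point $\tilde f(\pi(x))$ is an orbit $[x',y']$ with $\pi(x')=\pi(x)$, so we may choose a representative with first coordinate $x$. I will show that this representative is unique only up to the stabiliser $G_x$; that is where the side conditions enter.

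\textbf{Points of $X_1$.} If $x\in X_1$ then $G_x=\{1\}$, so there is a unique $y=:f(x)$ with $[x,y]=\tilde f(\pi(x))$. Uniqueness immediately gives $f(gx)=gf(x)$.

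\textbf{Points of $X_0$.} If $x\in X_0$, the assumption $\tilde f(X_0/G)\subset(X_0\times Y_0)/G$ only says that the representatives $(x,y)$ have $y\in Y_0$, and $y$ is determined modulo the action of $G_x$. To get a well-defined value I will not define $f$ pointwise there. Instead I will define $f$ on $X_1$ as above and extend it across the discrete set $X_0$ by continuity. The substantive step is to show that the limit exists and is $G_x$-fixed, and the main obstacle is this local analysis at $X_0$.

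\textbf{Local model at $x_0\in X_0$.} Take the coordinate $z$ with $g_0z=\E^{\imath\phi}z$. Near $\pi(x_0)$ the space $Z$ is locally $(\Delta\times Y)/G_{x_0}$ with base coordinate $w=z^k$. A continuous section over a small disc in $w$ lifts on the punctured disc in $z$ to a continuous $f$. This $f$ is single valued there, because $G$ acts freely on $X_1$, so $\Delta^*\times Y\to(\Delta^*\times Y)/G_{x_0}$ is a covering and the lift is canonical via $f(z)$ as defined above. Continuity of $\tilde f$ at $\pi(x_0)$ gives the following:
\begin{itemize}
\item every limit point of $f(z)$ as $z\to0$ lies in the finite $G_{x_0}$-orbit of a representative $y_0$;
\item since the punctured disc is connected and this orbit is discrete, the limit is a single point $y_0$;
\item since $f(g_0z)=g_0f(z)$, this limit satisfies $g_0y_0=y_0$.
\end{itemize}
Setting $f(x_0)=y_0\in Y_0$ gives a continuous $G$-equivariant $f$. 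The representative is then in fact forced: it is the $G_{x_0}$-fixed one in the fibre.

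\textbf{Holomorphic case.} If $\tilde f$ is holomorphic, then $f=\mathrm{pr}_Y\circ(\text{local lift})$ is holomorphic on $X_1$, because the quotient map is a local biholomorphism there. At $X_0$, $f$ is continuous and holomorphic off a discrete set, hence holomorphic by Riemann's removable singularity theorem, applied in local charts of the manifold $Y$.

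\textbf{Conditions and bijectivity.} Finally I check that the two constructions are mutually inverse. I also check that the conditions correspond: $f(X_1)\subset\bA_*$ if and only if $\tilde f(X_1/G)\subset\Omega$, and $f(X_0)\subset Y_0$ if and only if $\tilde f(X_0/G)\subset(X_0\times Y_0)/G$. Both equivalences hold because $\bA_*$ and $Y_0$ are $G$-invariant.

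In the forward direction, continuity or holomorphy of $\tilde f$ follows from that of $\Gamma_f$ together with the universal property of the quotient $X/G$. For the holomorphic case this uses that $X\to X/G$ is a quotient in the category of complex spaces and that $(X\times Y)/G$ carries the quotient structure.
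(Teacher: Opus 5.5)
Your proof is correct. The paper does not prove this lemma; it quotes it from \cite{Forstneric2024PLMS}, so there is no in-paper argument to compare against. What you give is the natural proof behind that citation: descend the $G$-equivariant graph map $x\mapsto (x,f(x))$; conversely, lift $\tilde f$ uniquely over the free part $X_1$, where $X_1\times Y\to (X_1\times Y)/G$ is an unbranched covering and a local biholomorphism; then extend across the discrete set $X_0$ by continuity and Riemann's removable singularity theorem.

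Two small points should be made explicit.

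First, at $x_0\in X_0$, knowing that all cluster values of $f(z)$ as $z\to 0$ lie in the finite set $G_{x_0}y_0$ does not by itself give a limit, because $Y$ is noncompact. Argue directly with the quotient topology instead. Let $q:\Delta\times Y\to(\Delta\times Y)/G_{x_0}$ be the quotient map. The sets $q(\Delta_r\times N)$, with $N$ a neighbourhood of $y_0$, form a neighbourhood basis of $[x_0,y_0]$, and $q^{-1}(q(\Delta_r\times N))=\bigcup_{g\in G_{x_0}}\Delta_r\times gN$. Choose $N$ so that the translates $gN$ belonging to distinct points of $G_{x_0}y_0$ are pairwise disjoint. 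Then, for $r$ small, continuity of $\tilde f$ and connectedness of the punctured disc $\Delta_r^*$ put $f(\Delta_r^*)$ inside a single translate. Shrinking $N$ then yields the limit. Alternatively, you can use properness of the finite quotient map.

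Second, it is this continuity, not the side condition $\tilde f(X_0/G)\subset (X_0\times Y_0)/G$, that removes the $G_{x_0}$-ambiguity, since it forces the representative over $x_0$ to be $G_{x_0}$-fixed. The side condition only serves to give $f(X_0)\subset Y_0$. Your opening remark that the ambiguity is ``where the side conditions enter'' should be adjusted accordingly.
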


The map $f_0$ from step 1 of the proof corresponds to 
a local holomorphic section $\tilde f_0$ of $Z\to X/G$ on 
an open neighbourhood $V/G \subset X/G$ of $X_0/G$ such that  
\[
	\tilde f_0\bigl((V \setminus X_0)/G)\bigr)  \subset \Omega.
\]
The fibre $\bA_*$ of the holomorphic fibre bundle \eqref{eq:rho} is 
$O(n,\C)$-homogeneous, hence an Oka manifold by a theorem
of Grauert \cite{Grauert1958MA}. (See also 
\cite[Proposition 5.6.1]{Forstneric2017E}.) 
Therefore, sections of the ramified holomorphic map 
$\rho:Z=(X\times Y)/G \to X/G$ mapping 
$X_1/G$ to $\Omega$ satisfy the Oka principle.  
(See \cite[Theorem 6.14.6, p.\ 310]{Forstneric2017E}, 
which originates from \cite{Forstneric2003FM}, along with 
\cite[Theorem 1.3]{Forstneric2023Indag}.) 
This gives global holomorphic sections $\tilde f:X/G\to Z$ satisfying 
$\tilde f(X_1/G)\subset \Omega$ which agree with $\tilde f_0$ 
at every point $p\in X_0/G$ to any given finite order $k(p)\in \N$.
Furthermore, $\tilde f$ can be chosen such that the corresponding
$G$-equivariant map $f:X\to Y$ integrates to a $G$-equivariant conformal
minimal immersion; the necessary period vanishing conditions are obtained 
using period dominating sprays.
The key technical ingredient in the construction 
of a section $\tilde f$ with the desired properties 
is provided by \cite[Lemma 4.5]{Forstneric2024PLMS},
and the construction is completed in \cite[Sect.\ 5]{Forstneric2024PLMS}.

A more precise version of Theorem \ref{th:FF}, 
with approximation on admissible Runge sets and interpolation on closed
discrete subsets, is \cite[Theorem 5.1]{Forstneric2024PLMS}.
We state it here for convenience of the reader since we will refer to it 
in the sequel. We begin by recalling the relevant definitions.

%
%
\begin{definition}
[Definition 1.12.9 in \cite{AlarconForstnericLopez2021}] 
\label{def:admissible}
An {\em admissible set} in a Riemann surface $X$ is 
a compact set of the form $S=K\cup E$, where $K$ is a 
(possibly empty) finite union of pairwise disjoint compact domains with piecewise 
$\Cscr^1$ boundaries in $X$ and $E = \overline{S \setminus K}$ is a union of 
finitely many pairwise disjoint, smoothly embedded Jordan arcs and 
closed Jordan curves meeting $K$ only at their endpoints (if at all) and 
such that their intersections with the boundary $bK$ of $K$ are transverse. 
\end{definition}

%
%
\begin{definition}[Definition 3.1.2 in \cite{AlarconForstnericLopez2021}] 
\label{def:GCMI}
Let $S=K\cup E$ be an admissible set in a Riemann surface $X$,
let $\theta$ be a holomorphic $1$-form 
on a neighbourhood of $S$ in $X$ without zeros on $bS$,
and let $Y=\overline \bA\setminus \{0\}$ \eqref{eq:Y}.  
A {\em generalised conformal minimal immersion} $S\to\R^n$ $(n\ge 3)$ 
of class $\Cscr^r$ $(r\in\N)$ is a pair $(F,f\theta)$, where $F: S\to \R^n$ 
is a $\Cscr^r$ map whose restriction to 
$\mathring S=S\setminus bS=\mathring K$ 
is a conformal minimal immersion and the map $f\in \Ascr^{r-1}(S,Y)$ 
satisfies the following conditions:
\begin{enumerate}[\rm (a)]
\item 
$f\theta =2\di F$ holds on $K$ (in particular, the zeros of $\theta$ 
cancel the poles of $f$), and
\item 
for any smooth path $\alpha:[0,1]\to X$ parameterizing a connected 
component of $E=\overline{S\setminus K}$ we have that 
$\Re(\alpha^*(f\theta))=\alpha^* dF = d(F\circ \alpha)$.
\end{enumerate}
\end{definition}

Given an admissible set $S\subset X$ and integers $r\ge 1,\ n\ge 3$, 
we denote by 
\[
	\GCMI^r(S,\R^n)
\]
the space of all generalised conformal minimal immersions $S\to\R^n$ 
of class $\Cscr^r$.  
An element $(F,f\theta)\in\GCMI^r(S,\R^n)$ is called \emph{nonflat} if, on every connected component of $K$ and of $E$, the image of $f$ is not contained in the completion in $Y$ of a complex line through the origin in $\bA$.

Let $G$ be a finite group as in Theorem \ref{th:FF}, 
$S\subset X$ be a $G$-invariant admissible set, and $\theta$ 
be a $G$-invariant 1-form on $X$ (see \eqref{eq:theta}). 
The pair $(F,f\theta)\in\GCMI^r(S,\R^n)$ is called $G$-equivariant if 
\[ 
	F(gx)=gF(x),\quad f(gx)=gf(x), 
	\quad x\in S,\ g\in G. 
\]
If $S=K$ (i.e., there are no attached arcs) then $G$-equivariance
of $F$ implies $G$-equivariance of $f$ by \eqref{eq:fg}; however,
this is not necessarily the case on the attached arcs. 
Conversely, $G$-equivariance of $f$ implies $G$-equivariance of its 
integral after the integration constants have been chosen compatibly. 
On a connected set, this is ensured by the orbit-integral conditions 
\eqref{eq:gv}. We denote by 
\begin{equation}\label{eq:GCMIG}
	\GCMI^r_{G,*}(S,\R^n) \subset \GCMI^r_G(S,\R^n)
\end{equation}
the spaces of (nonflat) $G$-equivariant generalised 
conformal minimal immersions $S\to\R^n$ of class $\Cscr^r$. 
We recall the following more precise version of Theorem \ref{th:FF} 
which will be used. 

%
%
\begin{theorem}[Theorem 5.1 in \cite{Forstneric2024PLMS}]
\label{th:FFbis}
Assume that $G$ is a finite group acting effectively on a connected open 
Riemann surface $X$ by holomorphic automorphisms and acting on 
$\R^n$, $n\ge 3$, by orthogonal transformations. Let $\pi:X\to X/G$ be 
the quotient projection and $X_0\subset X$ be the set \eqref{eq:X0} 
of its branch points. 
Assume that $S\subset X$ is a $G$-invariant admissible set 
(see Definition \ref{def:admissible}) which is Runge in $X$,  
$A\subset X$ is a closed $G$-invariant discrete set such that 
$X_0\subset A\subset X\setminus bS$, 
$V\subset X\setminus S$ is an open $G$-invariant neighbourhood of 
$A\setminus S$, and $F_0:S\cup V\to \R^n$ is such that 
$F_0|_S\in \GCMI^r_{G,*}(S,\R^n)$ $(r\ge 1)$ 
and $F_0|_V\in \CMI_G(V,\R^n)$.
Then, $F_0$ can be approximated in the $\Cscr^r$ 
topology on $S$ by nonflat $G$-equivariant conformal minimal immersions 
$F:X\to\R^n$ which agree with $F_0$ to a given finite order $k(a)\in\N$ 
at every point $a\in A$.
\end{theorem}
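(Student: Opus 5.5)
The plan is to pass to the quotient $X/G$ and to use the correspondence of Lemma \ref{lem:correspondence}. There, $G$-equivariant maps $f:X\to Y$ with $f(X_1)\subset\bA_*$ and $f(X_0)\subset Y_0$ become sections of the ramified map $\rho:Z\to X/G$ sending $X_1/G$ into $\Omega$. Since the fibre $\bA_*$ is Oka, sections of this type satisfy the Oka principle with approximation and jet interpolation \cite[Theorem 6.14.6]{Forstneric2017E}. The conditions that are not holomorphic in nature are the real period conditions \eqref{eq:PV}, the orbit conditions \eqref{eq:gv}, and the prescribed values of $F$ at the points of $A$. I would handle all of them with a $G$-equivariant period dominating spray, followed by a final choice of the parameter through the implicit function theorem.

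\emph{Inductive scheme.} Choose an exhaustion $S=S_0\subset S_1\subset\cdots$ of $X$ by $G$-invariant compact Runge admissible sets, with $S_j=\pi^{-1}(\pi(S_j))$. We may assume that $bS_j\cap A=\emptyset$ and that each $S_{j+1}$ is obtained from $S_j$ either by a noncritical enlargement or by attaching a $G$-orbit of Jordan arcs. In the arc case, I would choose one arc in each orbit of arcs, define $f$ and $F$ on it so that the generalised conditions (b) of Definition \ref{def:GCMI} hold, and then translate by $G$. This translation is consistent because $G$ acts freely off the discrete set $X_0$, and the arcs may be taken to avoid $X_0$ since $X_0\subset A$. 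A $G$-equivariant Mergelyan step on the quotient, applied to $\rho$-sections, then reduces the arc case to the noncritical case. It therefore suffices to prove a single step: approximate a $G$-equivariant $(F_0,f_0\theta)$ on $S_j\cup V$ by a $G$-equivariant conformal minimal immersion defined near $S_{j+1}$, with the required interpolation at the points of $A\cap S_{j+1}$. The conclusion of the theorem then follows by the usual limit argument.

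\emph{Main step: the equivariant spray.} First fix the data to be controlled. Take closed curves $C_1,\dots,C_m\subset S_j\setminus A$ whose classes span $H_1(S_j,\Z)$, and paths from a base point $x_0$ to $gx_0$ for $g\in G$ and to a representative point $a$ of each $G$-orbit in $A\cap S_{j+1}$. I then need a holomorphic family $f_\zeta$, $\zeta\in\C^N$, of $G$-equivariant maps with $f_{\zeta}(X_1)\subset \bA_*$, with $f_0$ the given map, agreeing with $f_0$ to order $k(a)$ on $A$, and such that the differential at $\zeta=0$ of the map $\zeta\mapsto$ (all the above integrals of $f_\zeta\theta$) is surjective. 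The target of this map is not all of $\C^{nN'}$. Because of \eqref{eq:ggamma}, it is the subspace $\mathcal P$ of $G$-compatible period data, that is, $G$-equivariant homomorphisms on $H_1$ together with compatible orbit increments, and surjectivity onto $\mathcal P$ is exactly what is needed. To build the family, I would compose flows $\phi^{V_i}_{t}$ of complete linear holomorphic vector fields $V_i$ on $\bA_*$, coming from $\mathfrak{so}(n,\C)$, and insert time functions $t=\zeta_i h_i(x)$. The functions $h_i$ are chosen on the quotient, which is the key point: on the free part we first build a spray of sections of the fibre bundle $\rho|_\Omega$ over $\pi(S_j)$. Taking the $h_i$ to vanish to high order on $A$ preserves the interpolation. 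Nonflatness of $f_0$ and the standard argument of \cite[Lemma 3.2.1]{AlarconForstnericLopez2021} then give domination onto the period data of the quotient family. These data correspond bijectively to $\mathcal P$, since a $G$-equivariant map is determined by its restriction to a fundamental domain.

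\emph{Concluding the step, and the main obstacle.} Next, apply the parametric Oka principle for sections of $\rho$ to approximate the spray, as a map of $(x,\zeta)$, uniformly on $\pi(S_j)\times\overline{\B}$ by a spray of global sections over $\pi(S_{j+1})$ with the same jets on $A$. Period domination is an open condition, so it survives the approximation, and the implicit function theorem yields $\zeta$ near $0$ at which all the period, orbit and value conditions hold exactly. Integrating via \eqref{eq:F}, the computation following \eqref{eq:ggamma} shows that $F$ is $G$-equivariant, while jet interpolation to a high enough order keeps $f\theta$ free of zeros at $X_0$ and gives $F=F_0$ to order $k(a)$ on $A$. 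I expect the hardest part to be the equivariant period domination near the branch points. There the bundle $\rho$ is ramified, and the spray has to be trivial to high order at $A$ while still dominating; this works only because the curves $C_i$ and the chosen paths avoid $A$. The corresponding technical ingredient is \cite[Lemma 4.5]{Forstneric2024PLMS}.
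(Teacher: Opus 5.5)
Your proposal is correct and follows essentially the same route as the paper's outline of this cited result: the correspondence of Lemma \ref{lem:correspondence} with sections of the ramified map $\rho:Z\to X/G$, the Oka principle for such sections with jet interpolation over $X_0/G$, and a $G$-equivariant period dominating spray (\cite[Lemma 4.5 and Sect.~5]{Forstneric2024PLMS}) to secure \eqref{eq:PV}, \eqref{eq:gv} and the values on $A$, all inside the usual exhaustion. The one point to state correctly is the spray itself: fixed fields $V_i$ with $G$-invariant time functions $h_i$ give a $G$-equivariant family only if the $V_i$ commute with $G$ (which forces $V_i=0$ when $G$ acts absolutely irreducibly on $\C^n$), so you should instead use holomorphic skew-symmetric matrix valued coefficients $M_i$ with $M_i(gx)=gM_i(x)g^{-1}$, obtained by averaging a Mergelyan approximant over $G$, and take the spray $\exp(\zeta_1M_1(x))\cdots\exp(\zeta_NM_N(x))f(x)$.
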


The construction described above, which was developed in 
\cite{Forstneric2024PLMS}, also yields the following theorem concerning 
the existence of $G$-equivariant conformal minimal surfaces with branch 
points. This result uses the weaker hypothesis (a') in Remark
\ref{rem:cond-a} on the points $x\in X$ with nontrivial stabiliser $G_x$.

\begin{theorem} \label{th:branched}
Let $G$ be a finite group acting effectively on a connected open 
Riemann surface $X$ by holomorphic automorphisms and acting on 
the Euclidean space $\R^n$, $n\ge 3$, by orthogonal transformations. 
Assume that for every nontrivial stabiliser $G_x$, $x\in X$, there is a 
two-dimensional linear subspace $\Lambda_x\subset\R^n$ on which 
$G_x$ acts effectively as a cyclic group of rotations. 
Then, there exists a $G$-equivariant 
conformal harmonic map $F:X\to\R^n$ which is branched at every 
point $x\in X$ at which the stabiliser group $G_x$ is nontrivial
but does not satisfy condition (a) in Theorem \ref{th:FF}.
\end{theorem}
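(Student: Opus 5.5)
The plan is to repeat the proof of Theorem \ref{th:FF} (Steps 1 and 2 above), but to replace condition (a) by the weaker condition (a') at those points of $X_0$ where (a) fails, and to allow the $1$-form $f\theta$ to vanish there. Let $B\subset X_0$ be the $G$-invariant closed discrete set of points $x$ at which (a) fails. Fix $x_0\in B$ with $k=|G_{x_0}|$, a coordinate $z$ in which the generator $g_0$ acts by $z\mapsto \E^{\imath\phi}z$ with $\phi=2\pi/k$, and the plane $\Lambda=\Lambda_{x_0}$ from (a'). Since $G_{x_0}$ acts effectively on $\Lambda$ by rotations, $g_0$ acts on $\Lambda$ as a rotation through $2\pi m/k$ for some integer $m$ with $\gcd(m,k)=1$. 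Orient $\Lambda$ so that $1\le m<k$; then $g_0$ acts on the null line $L=L(\Lambda)$ as multiplication by $\E^{\imath m\phi}$.

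Step 1 is replaced by the local model $F_0(z)=\Re(y_0 z^m)$ with $y_0\in L\setminus\{0\}$, which is a conformal harmonic map into $\Lambda$. It is $G_{x_0}$-equivariant because
\[
F_0(g_0z)=\Re\bigl(y_0\E^{\imath m\phi}z^m\bigr)=g_0F_0(z),
\]
and it has a branch point of order $m-1$ at $z=0$ (a genuine branch point when $m>1$, which holds since (a) fails at $x_0$). Then $2\di F_0=m y_0 z^{m-1}dz$, and with $\theta=\tilde h'(z^k)kz^{k-1}dz$ we get
\[
f_0=\frac{m y_0 z^{m-k}}{k\tilde h'(z^k)},
\]
which has a pole of order $k-m\ge 1$ at $z=0$. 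It therefore defines a holomorphic map $\Delta\to Y$ with $f_0(x_0)\in Y_0$ and $f_0(\Delta\setminus\{x_0\})\subset\bA_*$, and $f_0$ is $G_{x_0}$-equivariant by the computation \eqref{eq:fg}. The only difference from Step 1 is that the zero of $\theta$ of order $k-1$ now over-cancels the pole of $f_0$, so $f_0\theta$ has a zero of order $m-1$. At points of $X_0\setminus B$ we use the original Step 1. Extending by $G$-equivariance to the orbits gives $f_0$ and $F_0$ on a $G$-invariant neighbourhood $V$ of $X_0$.

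Step 2 is then carried out verbatim. Lemma \ref{lem:correspondence} and the Oka principle for sections of $\rho:Z\to X/G$ landing in $\Omega$ over $X_1/G$ never used (a); they only need a local section near $X_0/G$. We obtain a $G$-equivariant holomorphic $f:X\to Y$ with $f(X_1)\subset\bA_*$ that agrees with $f_0$ to high order along $X_0$, and we enforce the period conditions \eqref{eq:PV} and \eqref{eq:gv} by period dominating sprays as in \cite[Lemma 4.5 and Sect.\ 5]{Forstneric2024PLMS}. In practice one invokes the interpolation version, Theorem \ref{th:FFbis}, with $A=X_0$, $S=\varnothing$ or a small invariant disc, and $F_0|_V$ the branched local model. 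For this we must check that its proof allows the local data on $V$ to be branched.

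Jet interpolation of sufficiently high order forces $f\theta$ to vanish at each $x\in B$ to exactly the order $m_x-1$ and to have no zeros elsewhere on $X_0$, while on $X_1$ it has no zeros because $f\in\bA_*$ and $\theta\ne0$. The map $F$ defined by \eqref{eq:F} is therefore $G$-equivariant, conformal and harmonic, an immersion off $B$, and branched at every point of $B$. The main obstacle is confirming that the Oka-principle and period-dominating-spray machinery of \cite{Forstneric2024PLMS} is insensitive to the precise pole order prescribed at points of $X_0$. We expect this to hold because the pole order only enters through the finite-order jet interpolation at the discrete set $X_0$.
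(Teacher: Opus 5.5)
Your proposal is correct and follows essentially the same route as the paper: at points where (a) fails you replace the local model by the branched $G_{x_0}$-equivariant map $\Re(y_0z^m)$, which is the paper's $\Re\bigl(z^m(\bu-\imath\bv)\bigr)$ up to the choice of $y_0\in L\setminus\{0\}$, then extend by $G$-equivariance and run Step 2 with jet interpolation of high enough order to preserve the pole orders and leading coefficients of $f_0=2\di F_0/\theta$. The only cosmetic difference is that the paper orients $\Lambda$ so that $1\le m\le k/2$, which minimises the branching order, whereas your normalisation $1\le m<k$ works equally well, since failure of (a) forces $m\notin\{1,k-1\}$.
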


\begin{proof}
The only difference concerns the local construction at the points 
$x_0\in X_0$ \eqref{eq:X0} with a nontrivial stabiliser group
$G_{x_0}$; see Step 1 above. Choose a generator 
$g_0$ of $G_{x_0}$ such that, in a suitable local holomorphic
coordinate $z$ on $X$ with $z(x_0)=0$, we have
$g_0z=\E^{\imath\phi}z$ where $\phi=2\pi/k$, $k=|G_{x_0}|$
(see \eqref{eq:rotation}). 
If condition (a) in Theorem \ref{th:FF} holds at $x_0$, choose the unbranched 
local model from Step 1 above. Otherwise, choose a two-dimensional linear 
subspace $\Lambda\subset\R^n$ on which $G_{x_0}$ acts effectively 
by rotations. There is an orientation of $\Lambda$ in which the generator 
$g_0$ acts on $\Lambda$ as the rotation through the angle 
$m\phi$ for some integer $m$ satisfying 
\[
	1\le m \le k/2, \qquad \gcd(m,k)=1. 
\] 
Choose an oriented conformal frame $(\bu,\bv)$ for $\Lambda$. The map 
\[
	F_0(z)=F_0(r \E^{\imath t}) = 
	r^m \left(\cos(mt)\bu + \sin(mt)\bv\right)
	=\Re\bigl(z^m(\bu-\imath\bv)\bigr)
\]
is then a $G_{x_0}$-equivariant conformal harmonic map 
whose branch order at $z=0$ is $m-1$. In particular, $F_0$ is unbranched 
(a linear isomorphism onto $\Lambda$) if and only if $m=1$.
Extending this construction by $G$-equivariance to the orbit
$Gx_0$ and doing the same on every $G$-orbit in $X_0$
gives a conformal $G$-equivariant harmonic map $F_0:V\to\R^n$
from a $G$-invariant neighbourhood $V\subset X$ of $X_0$ 
whose branch locus is precisely the subset of $X_0$ where condition 
(a) in Theorem \ref{th:FF} fails. In Step 2 of the proof, impose sufficiently 
high jet interpolation on the local maps $f_0=2\di F_0/\theta$ 
at the points of $X_0$. This preserves their pole orders and nonzero leading coefficients. Consequently, the resulting differential $f\theta$ has the prescribed zero orders at these points and is nowhere zero on $X\setminus X_0$. 
The remaining details in Step 2 are the same as above.
\end{proof}

%
%
%
%
\section{Main new tools}
\label{sec:tools}
In this section, we present the main new tools that, together with the results 
from  \cite{Forstneric2024PLMS} in the previous section, will be used in the 
proof of the main theorems. Namely, we establish $G$-equivariant versions 
of the method for exposing boundary points of bordered Riemann surfaces, 
due to Forstneri\v c and Wold \cite{ForstnericWold2009} (see Theorem 
\ref{th:exposing} and Corollary \ref{cor:curves}), as well as of the 
Riemann-Hilbert method for minimal surfaces developed by 
the authors \cite{AlarconForstneric2015MA}, and in a more complete
form by Alarc\'on, Drinovec Drnov\v sek, Forstneri\v c, and L\'opez 
\cite{AlarconDrinovecForstnericLopez2015PLMS} 
(see Theorem \ref {th:RH}). 

%
%
%
\subsection{$G$-equivariant exposure of boundary points of bordered 
Riemann surfaces}
\label{sec:exposing}
The following result is an equivariant version 
of \cite[Theorem 6.7.1]{AlarconForstnericLopez2021}. 
The technique used in the proof originates in the 
result of Forstneri{\v c} and Wold \cite[Theorem 2.3]{ForstnericWold2009}
on exposing boundary points of bordered Riemann surfaces;
see also \cite[Theorem 9.9.1]{Forstneric2017E}. 
This method was first used to prove that if 
$\overline X$ is a compact bordered Riemann surface which admits 
a holomorphic embedding in $\C^2$, then the interior $X$ admits 
a proper holomorphic embedding in $\C^2$ \cite{ForstnericWold2009}. 
Later, it played a fundamental role 
in showing that every bordered Riemann surface 
immerses properly holomorphically in the ball of $\C^2$ as a complete 
complex curve \cite{AlarconForstneric2013MA}, as well as in all 
constructions of complete bounded minimal surfaces normalised by given bordered Riemann surfaces (see the discussion in Section \ref{sec:intro}). 

%
%
\begin{theorem}  \label{th:exposing}
Let $G$ be a finite group acting effectively on a connected open 
Riemann surface $R$ by holomorphic automorphisms.
Assume that $X\Subset R$ is a smoothly bounded 
$G$-invariant domain such that $G$ acts freely on $bX$,
$A=\{a_1,\ldots, a_k\} \subset X$ and 
$P=\{p_1,\ldots, p_m\}\subset bX$ are $G$-invariant subsets, 
and $E_1,\ldots,E_m$ are smoothly embedded pairwise disjoint compact 
arcs in $R$ such that for each $i=1,\ldots, m$, 
$E_i$ is attached to $\overline X$ 
transversely at the point $p_i\in bX$, $E_i\cap \overline X=\{p_i\}$,
and the set $E=\bigcup_{i=1}^m E_i$ is $G$-invariant.
(See \cite[Fig.\ 6.4, p.\ 292]{AlarconForstnericLopez2021}.)
Let $q_i\in E_i$ denote the other endpoint of $E_i$, set 
$Q=\{q_1,\ldots,q_m\}$, and let $b_i$ 
be a point in the relative interior of $E_i$ such that the set
$B=\{b_1,\ldots,b_m\}$ is $G$-invariant.
Suppose that for every $i=1,\dots,m$ we are given 
a neighbourhood $V_i\subset R$ of the arc $E_i$ and a 
neighbourhood $U_i\Subset V_i$ of the point $p_i$ such that the sets
$\overline V_i$ $(i=1,\ldots,m)$ 
are pairwise disjoint and the sets $U=\bigcup_{i=1}^m U_i$
and $V=\bigcup_{i=1}^m V_i$ are $G$-invariant. 
Given $d \in \N$ there exists a smooth $G$-equivariant
diffeomorphism $\phi:  \overline X \to  \phi(\overline X)\subset R$ 
which is holomorphic in $X$ and satisfies the following conditions.
\begin{enumerate}[\rm (a)]
\item $\phi$ approximates the identity to a desired precision in the 
smooth topology on $\overline X \setminus U$.
\item $\phi$ is tangent to the identity map to order $d$ at each point $a_1,\ldots,a_k$.
\item For every $i=1,\ldots,m$ we have that
\begin{equation}\label{eq:exposing}
	\phi(\overline X\cap U_i)\subset V_i,
	\quad \ \phi(p_i) = q_i,  
	\quad\ b_i\in \phi(X).
\end{equation}
\end{enumerate}
\end{theorem}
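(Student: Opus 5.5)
Since $G$ acts freely on $bX$ and the sets $P$, $E$, $U$, $V$ are $G$-invariant, the plan is to reduce to the non-equivariant construction on the quotient. Let $\pi:R\to R/G$ be the quotient projection. It is a holomorphic branched covering whose branch points lie in $R_0=\{x\in R: G_x\neq\{1\}\}$. By assumption $R_0\cap bX=\emptyset$. Shrinking $U_i$, $V_i$ (which only strengthens \eqref{eq:exposing}) and shortening nothing, we may assume the following:
\begin{itemize}
\item the closures $\overline V_i$ avoid $R_0$;
\item for $g\ne 1$ we have $g\overline V_i\cap \overline V_i=\emptyset$, where $gV_i=V_{j}$ if $gp_i=p_j$; this uses the freeness of $G$ on the arcs $E_i$, which follows since $gE_i=E_j$ with $j\ne i$ for $g\ne 1$ by freeness at $p_i$ and pairwise disjointness;
\item each $\pi|_{V_i}$ is a biholomorphism onto its image.
\end{itemize}

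The quotient $X'=X/G$ is a smoothly bounded domain in $R'=R/G$, and $\pi|_{bX}$ is an unbranched covering of $bX'$. Choose one representative $i$ in each $G$-orbit of indices. The images $E'_i=\pi(E_i)$, the points $p'_i,q'_i,b'_i$, the neighbourhoods $U'_i=\pi(U_i)$, $V'_i=\pi(V_i)$, and the set $A'=\pi(A)$ then satisfy the hypotheses of the non-equivariant \cite[Theorem 6.7.1]{AlarconForstnericLopez2021} on the open Riemann surface $R'$. This gives a smooth diffeomorphism $\phi':\overline{X'}\to\phi'(\overline{X'})\subset R'$, holomorphic on $X'$, with the following properties:
\begin{itemize}
\item $\phi'$ is close to the identity on $\overline{X'}\setminus U'$;
\item $\phi'$ is tangent to the identity to high order $d'$ at the points of $A'$;
\item $\phi'(p'_i)=q'_i$, $\phi'(\overline{X'}\cap U'_i)\subset V'_i$, and $b'_i\in\phi'(X')$.
\end{itemize}
We need $d'$ large relative to $d$ and the ramification orders at points of $A\cap R_0$, and we also interpolate to high order at all points of $X_0/G$, not only at $A'$.

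The main step is lifting $\phi'$ to a $G$-equivariant map $\phi$ with $\pi\circ\phi=\phi'\circ\pi$. On $\overline X\cap U$ we define $\phi$ using the local inverses $(\pi|_{V_i})^{-1}$. Away from $U$, $\phi'$ is close to the identity, hence homotopic to the inclusion through maps $\phi'_t$ of the form $\phi'_t=\exp(t\,\cdot)$ along short geodesics. We lift this homotopy starting from the identity. On the unbranched part this uses the covering property. At a branch point $x\in X_0$, the map $\phi'$ fixes $\pi(x)$ and is tangent to the identity to order $d'$ there. In a local coordinate $w=z^k$ it has the form $w\mapsto w+O(w^{d'+1})$, so $w(1+O(w^{d'}))$ has a holomorphic $k$-th root branch $z(1+O(z^{kd'}))^{1/k}$ near $0$. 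This branch commutes with the rotations in $G_x$, which gives an equivariant holomorphic lift near $x$ tangent to the identity to order about $kd'\ge d$.

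The lifts patch together by uniqueness of lifts with prescribed initial value: we take the branch near the identity, which is continuous on the connected set $\overline X$. Equivariance follows because $g\circ\phi\circ g^{-1}$ is another lift close to the identity, so by uniqueness it equals $\phi$. Then (a), (b) and \eqref{eq:exposing} follow from the corresponding properties of $\phi'$ and the choice of lifts on $U_i$; orbits of the chosen $i$ account for all indices by equivariance. Injectivity of $\phi$ follows from injectivity of $\phi'$ together with the fact that $\phi$ is close to the identity on the fibres.

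I expect the main obstacle to be the consistency of the lift near $U_i$ with the lift near the identity elsewhere. The point is that the exposing map is far from the identity only inside $U'_i$, where $\pi$ is a biholomorphism on $V_i$. Lifting along paths from $\overline X\setminus U$ into $U_i$ therefore stays within a single sheet, which requires $U_i\cap\overline X$ to be connected to the rest of $\overline X$ without leaving $V_i$. If this fails, I would instead do the Forstneri\v c--Wold construction directly and equivariantly on $R$. In that approach one constructs the rational-type map on $\overline X\cup E$ by gluing, and then symmetrises via the quotient in the same way.
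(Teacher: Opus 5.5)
Your proposal is correct and takes essentially the same route as the paper. You pass to $R/G$ and apply the non-equivariant \cite[Theorem 6.7.1]{AlarconForstnericLopez2021} with high-order interpolation at $\pi(A)$ and $X_0/G$. You then lift through $\pi$ using the $k$-th root formula at branch points, and obtain equivariance and injectivity from uniqueness of lifts and injectivity of $\phi'$. The one difference is how the sheet over the exposed regions is chosen: the paper lifts a global homotopy $H_t$ routed through simply connected sets $T_i$, whereas you choose it directly via $(\pi|_{V_i})^{-1}$.

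Your closing worry is unfounded. Wherever $\phi'$ is close to the identity on $\overline X\cap U_i$, the map $(\pi|_{V_i})^{-1}\circ\phi'\circ\pi$ is itself the near-identity lift, because $U_i\Subset V_i$ and the fibres of $\pi$ are uniformly separated near $bX$. So the two definitions agree pointwise on the overlap, and no connectivity of $U_i\cap\overline X$ is needed.
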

\begin{proof} 
Let $\pi:R\to R/G$ be the quotient projection, and set 
\[ 
	X_0=\{x\in X:G_x\ne\{1\}\}. 
\] 
First observe that $G$ acts freely on the union $E=\bigcup_{i=1}^m E_i$.  
Indeed, if $gx=x$ for some $x\in E_i$, then the disjointness of the arcs implies 
$gE_i=E_i$. Since $E_i\cap\overline X=\{p_i\}$ and $\overline X$ is 
$G$-invariant, this gives $gp_i=p_i$. The freeness of the action on $bX$ 
then implies $g=1$. Consequently, after replacing $R$ by a sufficiently 
small $G$-invariant neighbourhood of $\overline X\cup E$, we may 
assume that the ramification locus of $\pi$ is precisely the finite set $X_0$. Enlarging the finite set $A$ if necessary, we may assume that $X_0\subset A$. 
The domain $X/G\Subset R/G$ has smooth boundary, and 
the projection 
\[ 
	\pi:R\setminus X_0\longrightarrow (R\setminus X_0)/G 
\] 
is an unbranched covering with deck group $G$. Choose one arc $E_i$ 
from each $G$-orbit of the family $\{E_1,\ldots,E_m\}$. Their images 
under $\pi$ are pairwise disjoint embedded arcs attached transversely 
to $b(X/G)$. By shrinking the neighbourhoods used in the construction,
we may choose, for these representative arcs, pairwise disjoint simply 
connected neighbourhoods $T_i\subset (R\setminus X_0)/G$ 
such that $\pi(E_i)\subset T_i$ and the component of $\pi^{-1}(T_i)$ 
containing $E_i$ is contained in $V_i$. We also choose sufficiently small 
boundary neighbourhoods of $\pi(p_i)$ whose lifts containing $p_i$ lie in 
$U_i$. These choices can be made disjoint from $\pi(A)$. 
Applying \cite[Theorem 6.7.1]{AlarconForstnericLopez2021} to the domain
$X/G\Subset R/G$, the projected arcs, 
and the interpolation set $\pi(A)=A/G$ gives a smooth embedding 
\[ 
	\varphi:\overline{X}/G \longrightarrow R/G 
\] 
which is holomorphic in $X/G$, is as close to the identity as desired away from the chosen boundary neighbourhoods, and satisfies
\[
	\varphi(\pi(p_i))=\pi(q_i),\qquad \pi(b_i)\in \varphi(X/G). 
\] 
The images of the chosen boundary neighbourhoods lie in the corresponding 
sets $T_i$. We require, in addition, that $\varphi$ agree with the identity to 
order $N$ at every point of $\pi(A)=A/G$, where $N\ge d+1$. We explain 
how to choose the lift of $\varphi$. There is a homotopy 
\[ 
	H_t:\overline{X}/G \longrightarrow R/G, \qquad 0\le t\le1, 
\] 
from $H_0=\Id$ to $H_1=\varphi$ such that $H_t|_{X_0/G}=\Id$
and $H_t^{-1}(X_0/G)=X_0/G$ for all $t\in [0,1]$, the homotopy is uniformly 
close to the identity away from the chosen boundary neighbourhoods, and 
\[ 
	H_t(\pi(p_i))\in T_i,\qquad 0\le t\le1. 
\] 
Furthermore, near every point $c\in \pi(A)$, $H_t$ may be chosen holomorphic 
and tangent to the identity to order $N$. Here is a direct construction of this homotopy. On the complement of small boundary half-discs around the 
points $\pi(p_i)$, the map $\varphi$ is arbitrarily close to the identity, so 
the two maps can be joined by a small homotopy. In a coordinate $w$ 
centred at $c\in\pi(A)$, choose this homotopy to be 
\[ 
	H_t(w)=w+t\bigl(\varphi(w)-w\bigr). 
\] 
On a small coordinate disc this is holomorphic, fixes $c$, 
and has no other preimage of $c$. On the remaining part 
the homotopy can be chosen to agree with these local homotopies. 
Over each boundary half-disc, extend the homotopy inside the corresponding 
contractible set $T_i$. As the sets $T_i$ avoid $X_0/G$, this construction 
has all the stated properties. The covering homotopy property 
now gives a unique lift 
\[ 
	\phi_t:\overline X\setminus X_0\longrightarrow R\setminus X_0,
	\quad t\in [0,1], 
\] 
such that 
\[ 
	\phi_0=\Id,\qquad \pi\circ\phi_t=H_t\circ\pi,\quad t\in [0,1]. 
\] 
For every $g\in G$, the maps $\phi_t\circ g$ and $g\circ\phi_t$ lift the 
same homotopy and agree at $t=0$. Uniqueness of homotopy lifting 
therefore gives 
\[ 
	\phi_t\circ g=g\circ\phi_t, \qquad g\in G,\ t\in [0,1]. 
\] 
In particular, the terminal lift $\phi=\phi_1$ is $G$-equivariant. 
We next check its extension across $X_0$ and the interpolation condition. 
Let $a\in X_0$ and $k=|G_a|$. Choose local coordinates $z$ at $a$ and 
$w$ at $\pi(a)$ in which $\pi$ is given by $w=z^k$. The prescribed 
interpolation gives 
\[ 
	\varphi(w)=w+w^{N+1}\alpha(w) 
\] 
for a holomorphic function $\alpha$ near $0$. The local homotopy above is 
\[ 
	H_t(w)=w\bigl(1+t w^N\alpha(w)\bigr), \quad t\in [0,1]. 
\] 
Its lift starting at the identity is 
\[ 
	\phi_t(z) =z\bigl(1+t z^{kN}\alpha(z^k)\bigr)^{1/k}, \quad t\in [0,1],
\] 
where the root is chosen continuously from the value $1$ at $t=0$. 
On a sufficiently small disc this root is holomorphic and equals $1$ at $z=0$. Hence $\phi$ extends holomorphically across $a$ and satisfies 
\[ 
	\phi(z)=z+O(z^{kN+1}). 
\] 
In particular, it is locally biholomorphic there and agrees with the identity to 
order at least $d$. At a point of $A\setminus X_0$, the same conclusion 
follows from the fact that $\pi$ is locally biholomorphic and the lift is the one 
close to the identity. Thus condition {\rm(b)} holds. Since $\varphi$ is 
holomorphic in $X/G$ and smooth up to its boundary, its lift 
$\phi$ is holomorphic in $X$ and smooth on $\overline X$. 
We claim that it is injective. Indeed, if $\phi(x)=\phi(y)$ 
then $\varphi(\pi(x))=\varphi(\pi(y))$, and hence 
$\pi(x)=\pi(y)$. Thus $y=gx$ for some $g\in G$. Equivariance of $\phi$ gives 
$g\phi(x)=\phi(gx)=\phi(y)=\phi(x)$. 
Away from $X_0$ the action is free, so $g=1$. At points of $X_0$, 
injectivity follows because the lift fixes every such point. 
Therefore $\phi$ is a smooth diffeomorphism onto its image. 
The smallness of the homotopy away from the boundary neighbourhoods 
selects the lift close to the identity there. Smooth dependence of local 
inverse branches of $\pi$, together with the preceding local formula at 
ramification points, shows that $\phi$ approximates the identity as closely 
as desired in the smooth topology on $\overline X\setminus U$. 
This proves {\rm(a)}. Finally, the path $t\mapsto H_t(\pi(p_i))$ lies in $T_i$. 
Its lift starting at $p_i$ remains in the component of $\pi^{-1}(T_i)$ 
containing $E_i$. Since $\pi$ is injective on this component and 
$\varphi(\pi(p_i))=\pi(q_i)$, we obtain 
$
	\phi(p_i)=q_i. 
$ 
The same choice of sheets and the neighbourhood control in the exposing construction give 
$
	\phi(\overline X\cap U_i)\subset V_i 
$,  
using smaller boundary neighbourhoods inside $U_i$ and sufficiently close 
approximation on their complement. The image $\phi(X)$ is 
$G$-invariant and satisfies $\pi(\phi(X))=\varphi(X/G)$. Consequently, 
\[ 
	\phi(X)=\pi^{-1}\bigl(\varphi( X/G)\bigr). 
\] 
Since $\pi(b_i)\in\varphi( X/G)$, this implies $b_i\in\phi(X)$. Thus all three 
conditions in \eqref{eq:exposing} hold. 
\end{proof}

Recall that $E=\bigcup_{i=1}^m E_i$. 
In the situation of Theorem \ref{th:exposing}, the set 
\begin{equation}\label{eq:S}
	S= \overline X \cup E \subset R
\end{equation}
is a $G$-invariant admissible set (see Definition \ref{def:admissible}).
Recall that $q_i$ denotes the endpoint of the arc $E_i$ which is not contained 
in $\overline X$. Combining Theorems \ref{th:FFbis} 
and \ref{th:exposing} gives the following corollary, 
which is used in the proof of all main results in the paper.

%
%
\begin{corollary}\label{cor:curves}
Assume the hypothesis of Theorem \ref{th:exposing}, let $S\subset R$
be the admissible set \eqref{eq:S}, and suppose that $G$ acts by orthogonal
transformations on $\R^n$, $n\ge 3$. Given 
\begin{itemize}
\item $F_0\in \GCMI^r_{G,*}(S,\R^n)$, $r\ge 1$ (see \eqref{eq:GCMIG}), 
\item a neighbourhood $W_i\subset\R^n$ of the 
arc $F_0(E_i)$ and a neighbourhood $U'_i\subset R$ of the point $p_i$
for every $i=1,\ldots,m$, and 
\item numbers $\epsilon>0$ and $d\in\N$, 
\end{itemize}
there exist pairwise disjoint neighbourhoods $U_i\subset U'_i$ of 
$p_i$ $(i=1,\ldots,m)$ and a nonflat $G$-equivariant conformal minimal 
immersion $F:\overline X\to \R^n$ such that $U=\bigcup_{i=1}^m U_i$ is 
$G$-invariant and the following conditions hold:
\begin{enumerate}[\rm (a)] 
\item $\|F-F_0\|_{\Cscr^r(\overline X\setminus U)} < \epsilon$.
\item 
For every $i=1,\ldots,m$ we have 
$F(\overline X\cap U_i)\subset W_i$, $F(p_i) = F_0(q_i)$   
and $F_0(b_i) \in F(X)$.
\item $F$ agrees with $F_0$ to order $d$ on the finite set $A\subset X$.
\item $\Flux_F=\Flux_{F_0}$.
\end{enumerate}
\end{corollary}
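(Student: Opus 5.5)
The plan follows the non-equivariant proof of \cite[Theorem 6.7.1 and its corollary]{AlarconForstnericLopez2021}: first approximate on the admissible set, then expose boundary points, then reparametrise. The $G$-equivariance is carried through each step by Theorems \ref{th:FFbis} and \ref{th:exposing}.

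\emph{First step.} Enlarge $R$ to a small $G$-invariant neighbourhood of $S=\overline X\cup E$, which is Runge. The set $S$ is $G$-invariant, and $G$ acts freely on $E$, as shown at the start of the proof of Theorem \ref{th:exposing}. Hence $S$ is an admissible set as in Theorem \ref{th:FFbis}. Take the discrete set $A\cup X_0$, which we may assume contains $X_0$ after enlarging $A$. Take $V$ empty away from $S$, or a small neighbourhood of $X_0\setminus S=\emptyset$. Theorem \ref{th:FFbis} then gives a nonflat $G$-equivariant conformal minimal immersion $\wt F$ on a $G$-invariant neighbourhood $\Omega$ of $S$ in $R$ with the following properties:
\begin{itemize}
\item $\wt F$ approximates $F_0$ in $\Cscr^r(S)$;
\item $\wt F$ agrees with $F_0$ to order $d$ on $A$;
\item $\wt F(q_i)=F_0(q_i)$ and $\wt F(b_i)=F_0(b_i)$, obtained by adding the $G$-invariant finite set $Q\cup B\subset E$ to the interpolation set; this is allowed since these points are not in $bS$ once we regard them as interior points of arcs, or after a tiny extension of the arcs beyond $q_i$;
\item $\Flux_{\wt F}=\Flux_{F_0}$, by \cite[Remark 5.2]{Forstneric2024PLMS}, using that $H_1(S)=H_1(\overline X)$.
\end{itemize}
By closeness, $\wt F$ maps a neighbourhood $V_i$ of $E_i$ into $W_i$. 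Shrinking the $V_i$ makes the closures $\overline V_i$ pairwise disjoint and their union $G$-invariant. We may take $V_i=g V_j$ whenever $gE_j=E_i$: choose $V_j$ for one representative per orbit, small enough that its translates are disjoint, which is possible because the action on $E$ is free.

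\emph{Second step.} Choose $U_i\subset U_i'\cap V_i$, again $G$-equivariantly per orbit, and apply Theorem \ref{th:exposing} with $A$, $P$, $E$, $B$, $U$, $V$ and order $N\ge d$. This gives a $G$-equivariant smooth diffeomorphism $\phi:\overline X\to\phi(\overline X)\subset\Omega$, holomorphic on $X$, which is close to the identity off $U$, tangent to the identity to order $N$ on $A$, and satisfies $\phi(p_i)=q_i$, $\phi(\overline X\cap U_i)\subset V_i$ and $b_i\in\phi(X)$.

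\emph{Third step.} Set $F=\wt F\circ\phi$. Then:
\begin{itemize}
\item $F$ is a $G$-equivariant conformal minimal immersion of $\overline X$, since $\phi$ is a $G$-equivariant biholomorphism onto its image and $\wt F$ is equivariant;
\item $F$ is nonflat, since nonflatness is preserved under reparametrisation;
\item (a) holds on $\overline X\setminus U$, because there $\phi$ is smoothly close to the identity and $\wt F$ is close to $F_0$;
\item (b) holds: $F(\overline X\cap U_i)\subset\wt F(V_i)\subset W_i$, $F(p_i)=\wt F(q_i)=F_0(q_i)$, and $F_0(b_i)=\wt F(b_i)\in\wt F(\phi(X))=F(X)$;
\item (c) follows from the two order-$d$ contacts at $A$;
\item (d) holds because $\phi$ is homotopic to the inclusion through maps of $\overline X$ into $\Omega$ (the lifted homotopy $\phi_t$ in the proof of Theorem \ref{th:exposing}), so $\Flux_F=\Flux_{\wt F}|_{H_1(X)}=\Flux_{F_0}$.
\end{itemize}

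\emph{Main obstacle.} The delicate point is the first step: obtaining exact interpolation at the points $q_i,b_i$ on the arcs together with exact flux control, compatibly with $G$-equivariance. Theorem \ref{th:FFbis} is stated with interpolation only at points off $bS$ and with approximation on $S$. I would handle this by extending each arc $E_i$ slightly past $q_i$ so that $q_i$ and $b_i$ become interior points of arcs, and by invoking the period-dominating spray argument of \cite[Sect.\ 5]{Forstneric2024PLMS}. That argument simultaneously fixes the finitely many values $\Re\int f\theta$ along the arcs and the flux, and it can be done equivariantly since all the data are $G$-invariant. If exact interpolation were unavailable, a fallback is to approximate closely and then absorb the error by a small $G$-equivariant translation, which is not available in general, so the spray version is preferred.
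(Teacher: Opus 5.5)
Your proposal is correct and follows essentially the same route as the paper. You approximate $F_0$ on a small $G$-invariant neighbourhood of $S$ with order-$d$ interpolation on $A$, exact value interpolation at the points of $Q\cup B$, and unchanged flux; you choose the $V_i$ and $U_i$ equivariantly; you expose via Theorem \ref{th:exposing}; and you set $F=\widetilde F\circ\phi$. The only difference is in the first step: the paper takes it directly from \cite[Lemma 4.5]{Forstneric2024PLMS}, which supplies value interpolation on $B\cup P\cup Q$ and flux control and is precisely the period-dominating spray argument you fall back on. That fallback is genuinely needed: extending the arcs past $q_i$ does not help, because with $\mathring S=\mathring K$ every point of $E$ lies in $bS$, so Theorem \ref{th:FFbis} as stated does not give the interpolation at $q_i$ and $b_i$.
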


Intuitively speaking, we make a small modification of 
$F_0:\overline X\to \R^n$ 
away from the points $p_i\in bX$ and a big modification 
in a small neighbourhood $U_i$ of every $p_i$ to obtain a 
$G$-equivariant conformal minimal immersion $F:\overline X\to\R^n$ 
which maps $U_i\cap \overline X$ into a thin tube around the arc $F_0(E_i)$ 
and sends the point $p_i\in bX$ to the other endpoint $F_0(q_i)$ 
of $F_0(E_i)$ for every $i=1,\ldots,m$.

%
%
\begin{proof} 
Choose a sufficiently small $G$-invariant open neighbourhood $\widetilde S$ of 
$S$ such that $S/G$ is a strong deformation retract of $\widetilde S/G$. 
By \cite[Lemma 4.5]{Forstneric2024PLMS}, we can approximate $F_0$ in the 
$\Cscr^r(S)$ topology, with value interpolation on $B\cup P\cup Q$ and 
interpolation to order $d$ on $A$, by a $G$-equivariant conformal minimal 
immersion $\widetilde F_0:\widetilde S\to \R^n$,   
defined on an open neighbourhood $\widetilde S\subset R$ of $S$,
with $\Flux_{\widetilde F_0}=\Flux_{F_0}$. 
Assuming that the approximation is sufficiently close on $S$, 
$\widetilde F_0$ is nonflat and satisfies 
\[ 
	\widetilde F_0(E_i)\subset W_i,\qquad i=1,\ldots,m. 
\] 
Choose pairwise disjoint neighbourhoods $V_i\subset\widetilde S$ 
of the arcs $E_i$ such that $V=\bigcup_{i=1}^m V_i$ is $G$-invariant and
\[ 
	\widetilde F_0(V_i)\subset W_i, \qquad i=1,\ldots,m. 
\] 
For every $i$, choose a neighbourhood $U_i\Subset U'_i\cap V_i$ 
of $p_i$ such that $U=\bigcup_{i=1}^m U_i$ is $G$-invariant. Let 
\[ 
	\phi:\overline X\to \phi(\overline X)\subset\widetilde S 
\] 
be a $G$-equivariant diffeomorphism furnished by Theorem \ref{th:exposing},
approximating the identity map on $\overline X\setminus U$. 
Assuming that the approximations in both steps are sufficiently close, 
the map 
\[
	F=\wt F_0 \circ \phi: \overline X\to \R^n
\]
is a well-defined $G$-equivariant conformal minimal immersion 
satisfying the corollary. The last condition in (b) 
is seen as follows. For every $i$, condition \eqref{eq:exposing} gives a point 
$x_i\in X$ such that $\phi(x_i)=b_i$. Since $\widetilde F_0$ agrees with 
$F_0$ at $b_i$, we have $F(x_i) = \widetilde F_0(b_i) = F_0(b_i)$ and  
hence $F_0(b_i)\in F(X)$.
\end{proof}

%
%
%
%
\subsection{$G$-equivariant Riemann--Hilbert deformations}
\label{sec:RH}

The classical Riemann--Hilbert boundary value problem asks for 
continuous or smooth functions on a 
compact bordered Riemann surface $\overline X$ 
which are holomorphic in the interior $X$ and whose
value at any boundary point $x\in bX$ lies in a given real curve 
$\lambda_x\subset \C$ depending continuously or smoothly on $x$.  
A general result of this type is due to \v Cerne \cite{Cerne2004}
whose paper contains references to many earlier works.
However, in geometric applications it often suffices to use 
approximate solutions of a Riemann--Hilbert problem, which 
are typically easier to obtain and which may exist even when 
exact solutions do not. Such approximate solutions
have been used in particular for moving the boundary of the 
image toward the boundary of a target domain, 
which leads by an inductive process to existence of proper maps 
(see \cite{DrinovecForstneric2007DMJ} and the references therein), 
and to increase intrinsic distances of the image 
by inserting oscillatory behaviour near 
the boundary, which inductively leads to completeness of the limit map
(see \cite{AlarconForstneric2013MA}).
The Riemann--Hilbert method was first adapted 
for use in the theory of conformal minimal surfaces in $\R^n$ and 
holomorphic null curves by the authors in \cite{AlarconForstneric2015MA}.
A comprehensive discussion of this topic can be found in 
\cite[Chap.\ 6]{AlarconForstnericLopez2021}.

We shall adapt the Riemann--Hilbert deformation method
to $G$-equivariant conformal minimal immersions.
As before, $\bA$ denotes the null quadric \eqref{eq:bA} in $\C^n$
for $n\ge 3$. Set 
\[
	\D=\{z\in \C:|z|<1\},\quad \T=b\D=\{z\in \C:|z|=1\}.
\]
The following result is a $G$-equivariant 
analogue of \cite[Theorem 6.6.2]{AlarconForstnericLopez2021};
see also \cite[Theorem 6.6.1]{AlarconForstnericLopez2021}
for the corresponding result concerning holomorphic null curves.

%
%
%
\begin{theorem}[Equivariant Riemann--Hilbert deformations
of minimal surfaces]
\label{th:RH}
Let $X$ be a bordered Riemann surface on which a finite group $G$ 
acts by holomorphic automorphisms 
such that the action is free on $bX$. Assume that $G$ also acts on 
$\R^n$ by orthogonal transformations. Let $I_1,\ldots,I_k$ be pairwise disjoint 
closed arcs in $bX$, none of which is a connected component of $bX$, 
such that $I=\bigcup_{j=1}^k {I_j}$ is $G$-invariant. 
Choose a $G$-invariant annular neighbourhood $A\subset \overline X$ 
of $bX$ and a smooth $G$-equivariant retraction $\rho:A\to bX$. 
Assume the following:
\begin{itemize}
\item $F: \overline X\to\R^n$ is a nonflat $G$-equivariant 
conformal minimal immersion of class $\Cscr^1(\overline X)$.
\item 
For each $i=1,\ldots, k$, $(\bu_i,\bv_i)$ is a pair of
orthogonal vectors in $\R^n$ satisfying $|\bu_i|=|\bv_i|>0$, and 
if for some $g\in G$ we have $g(I_i)=I_j$ then 
$g\bu_i=\bu_j$ and $g\bv_i=\bv_j$.  
\item $r :bX \to [0,1]$ is a continuous $G$-invariant function 
supported on $I$.
\item  $\sigma :I \times\overline{\D}\to\C$ is a 
$\Cscr^1$ function which is $G$-invariant in the first variable
(that is, $\sigma(g x,\xi)=\sigma(x,\xi)$ for all $g\in G$, $x\in I$, and 
$\xi\in \overline{\D}$), for every $x\in I$ the function 
$\sigma(x,\cdotp)$ is holomorphic in $\D$ and satisfies $\sigma(x,0)=0$, 
and $\frac{\di \sigma}{\di \xi}$ is nowhere vanishing on $I \times \overline\D$.
\item $\Lambda\subset X$ is a finite $G$-invariant set containing $X_0$ 
\eqref{eq:X0}.
\end{itemize}
Consider the continuous map
$\varkappa:bX \times\overline{\D}\to\R^n$ given by
\begin{equation}\label{eq:kappa}
	\varkappa(x,\xi)=
	\begin{cases}
		F(x),  & \text{if}\ x \in bX\setminus I; \\
		F(x) + \Re \sigma(x,r(x)\xi) \bu_i+ \Im \sigma(x,r(x)\xi)  \bv_i, &
		\text{if}\ x \in I_i,\; i\in\{1,\ldots,k\}.
	\end{cases}
\end{equation}
Given $\epsilon>0$, $d\in\N$, and a neighbourhood $U\subset \overline X$ 
of $I$, there exist a neighbourhood $\Omega\subset U\cap A$ of $I$ 
and a $G$-equivariant conformal minimal immersion 
$\wt F:\overline X\to \R^n$ satisfying the following conditions.
\begin{enumerate}[\rm (i)]
\item $\dist(\wt F(x),\varkappa(x,\T))<\epsilon$ for all $x\in bX$.
\item $\dist(\wt F(x),\varkappa(\rho(x),\overline{\D}))<\epsilon$ for all 
$x \in \Omega$.
\item $\wt F$ is $\epsilon$-close to $F$ in the $\Cscr^1$ norm on 
$\overline X\setminus \Omega$.
\item $\wt F-F$ vanishes to order $d$ at every point in $\Lambda$.
\item $\Flux_{\wt F}=\Flux_F$ (see \eqref{eq:flux}).
\end{enumerate}
\end{theorem}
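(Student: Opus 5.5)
We plan to reduce Theorem \ref{th:RH} to the non-equivariant Riemann--Hilbert method of \cite[Theorem 6.6.2]{AlarconForstnericLopez2021} by working with the Weierstrass data $f=2\di F/\theta$ and then restoring equivariance. Since $G$ acts freely on $bX$ and $I$ is $G$-invariant, we choose one arc from each $G$-orbit of $\{I_1,\ldots,I_k\}$, say $I_{j_1},\ldots,I_{j_s}$. Shrinking $U$ and the annulus $A$, we can assume that the translates $gU_l$ of small neighbourhoods $U_l$ of these representative arcs are pairwise disjoint for distinct $g$, since $gI_{j}\cap I_j=\emptyset$ for $g\ne 1$. We can also assume they avoid $\Lambda$.

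\textbf{Step 1: local deformation near representative arcs.} Near each representative arc $I_{j_l}$ we carry out the standard local construction from the proof of \cite[Theorem 6.6.2]{AlarconForstnericLopez2021}. This is the approximate solution of the Riemann--Hilbert problem for the null disc family $\xi\mapsto \sigma(x,r(x)\xi)(\bu-\imath\bv)$, obtained through the spray of the null quadric generated by the flows of the complex orthogonal vector fields. It produces holomorphic Weierstrass data $f_l$ on $\overline X$ close to $f$ outside a thin neighbourhood $\Omega_l\subset U_l$ of $I_{j_l}$, with boundary behaviour (i),(ii) along $I_{j_l}$. We then define the data near $gI_{j_l}$ by $g f_l\circ g^{-1}$. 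The hypotheses $g\bu_i=\bu_j$, $g\bv_i=\bv_j$ and the $G$-invariance of $r$ and $\sigma$ ensure that the translated data solve the Riemann--Hilbert problem prescribed on $gI_{j_l}$, because $\varkappa(gx,\xi)=g\varkappa(x,\xi)$.

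\textbf{Step 2: gluing into a $G$-equivariant map.} The local modifications are not global, so we glue them by passing to the quotient. As in Section \ref{sec:prelim}, $G$-equivariant maps $X\to Y$ correspond to sections of $\rho:Z\to X/G$, and near $\pi(I)\subset b(X/G)$ this fibration is an honest $\bA_*$-bundle. The one-parameter deformation family near $\pi(I_{j_l})$ is therefore a family of local sections. Running the non-equivariant Riemann--Hilbert argument on the bordered surface $X/G$ in this bundle, with fibre $\bA_*$ Oka and homogeneous, gives a global section. This is justified because the proof of \cite[Theorem 6.6.2]{AlarconForstnericLopez2021} uses only Cartan-type gluing of sprays and Mergelyan approximation, both available for sections of holomorphic fibre bundles with Oka homogeneous fibre. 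We also keep the section equal to the old one to high order at the points of $\pi(\Lambda)\supset X_0/G$, so that the poles of $f$ at $X_0$ remain cancelled by the zeros of $\theta$, as in Step 1 of Section \ref{sec:prelim}. By Lemma \ref{lem:correspondence}, the section lifts to a $G$-equivariant map $\tilde f:\overline X\to Y$ satisfying $\tilde f(X_1)\subset\bA_*$.

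\textbf{Step 3: periods, orbit conditions and integration.} This is the main obstacle. The new data must satisfy $\Re\int_C\tilde f\theta=\Flux$ conditions, namely (v) and \eqref{eq:PV}, together with the orbit-integral conditions \eqref{eq:gv} and jet interpolation on $\Lambda$, and these corrections must stay small away from $\Omega$. We would handle this with a $G$-equivariant period dominating spray as in \cite[Lemma 4.5]{Forstneric2024PLMS}. The spray is supported on a $G$-invariant compact set in $X\setminus\Omega$ carrying a homology basis and paths from $x_0$ to $gx_0$. It is built equivariantly by averaging, or equivalently as sections over $X/G$; nonflatness of $F$ gives domination. After adjusting the parameter to restore the periods, we integrate: $\wt F(x)=F(x_0)+\Re\int_{x_0}^x\tilde f\theta$. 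The computation \eqref{eq:ggamma} then shows that $\wt F$ is $G$-equivariant. Conditions (i)--(iii) follow from the local estimates in Step 1 because the correcting parameter is small. Condition (iv) follows from the jet interpolation together with $\wt F(x_0)=F(x_0)$ for $x_0\in\Lambda$, and condition (v) holds by construction.
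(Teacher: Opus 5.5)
Your strategy is the same as the paper's. You choose one arc in each $G$-orbit and perform the local Riemann--Hilbert construction of \cite[Theorem 6.6.2]{AlarconForstnericLopez2021} there. You transport the data to the other arcs by $x\mapsto g f_l(g^{-1}x)$, using $\varkappa(gx,\xi)=g\varkappa(x,\xi)$. You glue over $\overline X/G$ through Lemma \ref{lem:correspondence} and the Oka principle for the ramified map $Z\to X/G$. You keep high-order jets at $\Lambda\supset X_0$ so that the poles of $\tilde f$ stay cancelled by the zeros of $\theta$. Finally, you restore the periods and the orbit conditions \eqref{eq:gv} with the $G$-equivariant period dominating spray of \cite[Lemma 4.5]{Forstneric2024PLMS}. (A small slip: the flux in (v) is $\Im\int_C\tilde f\theta$, so the spray has to control the full complex periods, i.e.\ $\int_C(\tilde f-f)\theta=0$ on a homology basis, not only their real parts.)

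There is, however, a concrete gap in your derivation of (iv).

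\begin{itemize}
\item Jet interpolation of $\tilde f$ to $f$ at a point $a\in\Lambda$ only controls the derivatives of $\wt F-F$ at $a$, because $d(\wt F-F)=\Re\bigl((\tilde f-f)\theta\bigr)$.
\item The value itself is $\wt F(a)-F(a)=\Re\int_{\gamma_a}(\tilde f-f)\theta$ for a path $\gamma_a$ from $x_0$ to $a$. This is small but in general nonzero, since after gluing and the period correction $\tilde f$ only approximates $f$ along $\gamma_a$.
\item The normalisation $\wt F(x_0)=F(x_0)$ combined with equivariance gives $\wt F=F$ only on the orbit $Gx_0$. But $\Lambda\supset X_0$ may consist of several $G$-orbits.
\item Your spray, as described, dominates only the periods over a homology basis and the integrals along paths from $x_0$ to $gx_0$. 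So nothing forces the remaining constants to vanish.
\end{itemize}

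The fix is to include in the map dominated by the spray also the quantities $\Re\int_{\gamma_a}\tilde f\theta$, for one point $a$ in each $G$-orbit of $\Lambda$. The paper imposes $\Re\int_{\gamma_a}(\tilde f-f)\theta=0$ for all $a\in\Lambda$. Then $\wt F(a)=F(a)$, and equivariance gives $\wt F(ga)=g\wt F(a)=F(ga)$ on the rest of the orbit. Together with the jet interpolation this yields (iv), and the implicit-function step is unchanged.
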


Observe that since the action of $G$ on $bX$ is free and every element of $G$ 
preserves the boundary orientation, the induced action of $G$ on the set 
of arcs $\{I_1,\ldots,I_k\}$ in the above theorem is free. 
Indeed, if $g\in G\setminus\{1\}$ preserved a proper boundary arc setwise, 
then it would preserve the ordered pair of its endpoints. 
Hence both endpoints would be fixed by $g$, contrary to the 
freeness of the action on $bX$.
%
%
\begin{proof} 
Let $(\bu_i,\bv_i)$ for $i=1,\ldots,k$ be as in the theorem. 
Then, 
\[
	\bw_i=\bu_i-\imath\bv_i\in \bA_*
	\subset \C^n\setminus\{0\}, \qquad i=1,\ldots,k
\]
Furthermore, if $g\in G$ and $g(I_i)=I_j$ then 
\[ 
	g\bw_i=g\bu_i-\imath g\bv_i =\bu_j-\imath\bv_j=\bw_j. 
\] 
Thus the family of null directions $\{\bw_i\}_{i=1}^k$ 
is compatible with the action of $G$. 

We will adapt the proof of the Riemann--Hilbert theorem 
for conformal minimal immersions 
(see \cite[Theorem 3.6]{AlarconDrinovecForstnericLopez2015PLMS}
or \cite[Theorem 6.6.2]{AlarconForstnericLopez2021}), 
performing the construction simultaneously on the $G$-orbits of the 
boundary arcs. We explain the modifications. 

By Remark \ref{rem:XG}, we may assume that $X$ is a relatively compact 
$G$-invariant domain with real analytic boundary in an open 
Riemann surface $R$, that $G$ acts on $R$ by holomorphic 
automorphisms, and its stabiliser locus is a finite set $X_0\subset X$.
Choose a $G$-invariant holomorphic $1$-form $\theta$ on $R$ 
as in \eqref{eq:theta}. Thus, 
$\{\theta=0\}=X_0$. Let $Y=\bA_*\cup Y_0\subset \CP^n$ be 
the manifold \eqref{eq:Y}. The map  
\begin{equation}\label{eq:f}
	f=2\di F/\theta :\overline X\to Y
\end{equation}
is of class $\Ascr(\overline X,Y)$, holomorphic on $X$, and $G$-equivariant. 
Since the action of $G$ is free in a neighbourhood of $bX$, after shrinking 
the annulus $A$ around $bX$ we have $A\cap X_0=\varnothing$ and hence
\[ 
	f(A)\subset\bA_*. 
\] 
Let $I_{i_1},\ldots,I_{i_s}$ be representatives of the $G$-orbits in 
the set of arcs $\{I_1,\ldots,I_k\}$. Choose pairwise disjoint closed 
boundary discs 
\[ 
	D_{i_\nu}\subset U\cap A,\qquad \nu=1,\ldots,s
\] 
with piecewise smooth boundaries such that for every $\nu=1,\ldots,s$ the arc
\[ 
	I_{i_\nu}\subset bX\cap bD_{i_\nu} 
\] 
lies in the relative interior of $bX\cap bD_{i_\nu}$ and  
\[ 
	\rho(D_{i_\nu})\subset bX\cap D_{i_\nu}. 
\]
These discs may be chosen so small that the sets 
\[ 
	D_{g,i_\nu}:=g(D_{i_\nu}), \qquad g\in G,\quad \nu=1,\ldots,s, 
\] 
are pairwise disjoint. If $g(I_{i_\nu})=I_j$, we write $D_j=g(D_{i_\nu})$. 
In this way we obtain a $G$-invariant family of pairwise disjoint discs 
$D_1,\ldots,D_k$. 

Let $\sigma$ be as in the theorem. 
On every representative disc $D_{i_\nu}$ we apply the local 
Riemann--Hilbert construction used in the proof of 
\cite[Theorem 3.6]{AlarconDrinovecForstnericLopez2015PLMS}
(see also \cite[Theorem 6.6.2]{AlarconForstnericLopez2021})
with the null direction 
\[ 
	\bw_{i_\nu}=\bu_{i_\nu} - \imath\bv_{i_\nu} \in\bA_*
\]
and the attached family of conformal minimal discs 
\[ 
	\cD\ni \xi\longmapsto 
	\varkappa(x,\xi) = 
	F(x)+\Re \sigma(x,r(x)\xi) \bu_i+ \Im \sigma(x,r(x)\xi)  \bv_i, 
	\quad x\in I_{i_\nu},\ \xi \in \cD 
\] 
(see \eqref{eq:kappa}). This yields holomorphic null data 
\[
	f_{i_\nu}:D_{i_\nu}\times \cD \to Y,\quad \nu=1,\ldots, s
\] 
on the representative discs $D_{i_\nu}$ such that for some point
$p_{i_\nu}\in \mathring D_{i_\nu}$, the integrals 
\[
	F(p_{i_\nu}) + \Re \int_{p_{i_\nu}}^x f_{i_\nu}(x,\xi) \theta,
	\qquad  x\in D_{i_\nu},\ \xi\in \cD 
\]
satisfy the estimates (i)--(iii). 
(The integral from $p_{i_\nu}$ to $x$ is taken within the disc 
$D_{i_\nu}$. See the proofs of Theorems 6.6.1 and 6.6.2 in 
\cite{AlarconForstnericLopez2021} for the details.)
We extend this construction to every disc in the orbit by $G$-equivariance. 
More precisely, for any $g\in G$ we define the null data 
on the disc $D_j=g(D_{i_\nu})$ by 
\begin{equation}\label{eq:fj}
	f_j(x,\xi)=g f_{i_\nu}(g^{-1}x,\xi),\qquad x\in D_j,\ \xi\in \cD. 
\end{equation} 
The definition is unambiguous since the action of $G$ on the set of arcs, 
and hence on the chosen family of discs, is free. The identities 
\[ 
	\sigma(gx,\xi)=\sigma(x,\xi),\qquad r(gx)=r(x), 
	\qquad g\bw_{i_\nu}=\bw_j 
\] 
show that the resulting integrals 
satisfy the same Riemann--Hilbert estimates on all the discs $D_j$ for 
$j=1,\ldots,k$, and the family is $G$-equivariant. 

It remains to glue the local null data $f_j$ \eqref{eq:fj} to $f$ \eqref{eq:f}
so that the integral of the new $G$-equivariant holomorphic map 
$\tilde f:\overline X\to Y$ 
is well-defined and satisfies the required interpolation, approximation and 
period conditions. For this purpose, we use a 
$G$-equivariant period-dominating spray with core $f$. The construction 
of such sprays, including the periods needed for equivariant integration, 
is given in \cite[Lemma 4.5 and Sect.~5]{Forstneric2024PLMS}. 
We choose the spray so that its members agree with $f$ to order $d$ at 
every point of the $G$-invariant finite set $\Lambda$.
In addition, the orders are chosen to preserve the pole orders and 
leading coefficients of $f$ at every point of $X_0$, so that $\tilde f\theta$ 
extends holomorphically and is nowhere zero at these points.
Likewise, we construct a $G$-equivariant family of sprays on the discs 
$D_j$, with core $f_j$ \eqref{eq:fj}, such that these sprays approximate 
the one over $f$ as closely as desired 
outside a small neighbourhood of the arc $I_j$.
To complete the proof, we glue the spray 
with the core $f$ and the family of sprays over the discs $D_j$ with the 
core $f_j$ into a new $G$-equivariant holomorphic spray on $\overline X$  
with values in $Y$ \eqref{eq:Y} such that a certain member
\[ 
	\tilde f:\overline X\longrightarrow Y 
\] 
approximates $f$ in the $\Cscr^0$ 
topology off a small neighbourhood $\Omega\subset U\cap A$ 
of $I$, and the integral $\wt F:\overline X\to \R^n$ of $\tilde f$ 
is well defined, $G$-equivariant, and satisfies conditions (i)--(v)
in the theorem. Furthermore, the jet interpolation conditions 
on $\tilde f$ at the points of $X_0$ ensure that 
\[ 
	\tilde f(\overline X\setminus X_0)\subset\bA_*, 
	\qquad \tilde f(X_0)\subset Y_0, 
\] 
so that $\tilde f\theta$ extends holomorphically and is nowhere zero.
This ensures that $\wt F$ is an immersion.

For gluing of sprays, and to find a suitable member $\tilde f$ of the 
resulting glued spray, we use the correspondence in 
Lemma \ref{lem:correspondence} as in 
\cite[proof of Theorem 5.1]{Forstneric2024PLMS}.
Note that $G$-equivariant maps (and sprays of such maps) 
from $G$-invariant subdomains of $\overline X$ to $Y$  
pass down to the quotient $\overline X/G\subset R/G$
as sections (or sprays of sections) of the holomorphic fibration
$\rho:Z\to R/G$ \eqref{eq:Z}; conversely, sections 
of $\rho$ yield $G$-equivariant maps to $Y$. 
Hence, we can perform the gluing 
of sprays over the base $\overline X/G$ by using  
\cite[proof of Theorem 3.5]{AlarconDrinovecForstnericLopez2015PLMS}, 
coupled with the technique from \cite[Theorem 2.1]{Forstneric2003FM}
(see also \cite[Theorem 6.14.6]{Forstneric2017E}), 
which yields the Oka principle for sections of certain ramified 
holomorphic maps onto a Stein base.  
Fix a point $x_0\in X$ and for every $a\in\Lambda$ 
choose a path $\gamma_a$ in $X$ from $x_0$ to $a$.
As shown in \cite[proof of Lemma 4.5]{Forstneric2024PLMS},
the parameter in the dominating $G$-equivariant spray of maps 
$\overline X\to Y$, obtained by gluing, 
can be chosen so that the corresponding member $\tilde f$ 
of the spray satisfies
\begin{equation}\label{eq:RHperiods} 
	\int_C(\tilde f-f)\theta=0 \quad\text{for every }[C]\in H_1(X,\Z), 
\end{equation} 
\begin{equation}\label{eq:RHperiods2}
	\Re\int_{x_0}^{gx_0}(\tilde f-f)\theta=0, \qquad g\in G, 
\end{equation} 
\begin{equation}\label{eq:RHperiods3}
	\Re\int_{\gamma_a}(\tilde f-f)\theta=0,\qquad a\in\Lambda.
\end{equation} 
The real parts of equations \eqref{eq:RHperiods} ensure that the map 
$\wt F:\overline X\to\R^n$, given by 
\[ 	
	\widetilde F(x) 
	= F(x_0)+\Re\int_{x_0}^x\tilde f\theta, \qquad x\in\overline X, 
\] 
is a well-defined conformal minimal immersion,
whereas the imaginary parts of  \eqref{eq:RHperiods} ensure that 
$\Flux_{\widetilde F}=\Flux_F$ (condition (v)). 
Condition \eqref{eq:RHperiods2} implies 
that \eqref{eq:gv} remains unchanged: 
\begin{equation}\label{eq:gv2}
	g\wt F(x_0)-\wt F(x_0) =
	 \Re\int_{x_0}^{gx_0}\tilde f\,\theta, \qquad g\in G.
\end{equation}
The $G$-equivariance of $\tilde f$ and \eqref{eq:gv2} imply 
exactly as in \eqref{eq:ggamma} that $\widetilde F$ is $G$-equivariant. 
Condition \eqref{eq:RHperiods3} implies that $\widetilde F-F$ 
vanishes at every point of $\Lambda$. This and  the jet-interpolation 
conditions of sufficiently high order, imposed on the spray at the points 
of $\Lambda$, imply that $\widetilde F-F$ vanishes to order $d$ 
at every point of $\Lambda$, so (iv) holds. 
The estimates in the local Riemann--Hilbert 
construction are preserved up to a multiplicative constant in 
the gluing process, and the gluing can be made arbitrarily 
close to the original spray on $\overline X\setminus\Omega$,
so we can also ensure that conditions (i)--(iii) hold. 
\end{proof}
%
%
%

%
%
\begin{corollary}\label{cor:RH-null} 
The analogue of Theorem \ref{th:RH} holds for $G$-equivariant 
holomorphic null immersions into $\C^n$ for any $n\ge 3$, where 
$G$ acts on $\C^n$ by elements of the complex orthogonal group 
$O(n,\C)$. The approximation and jet-interpolation conclusions remain 
valid, and the deformation may be chosen so that the null differential 
has the same complex periods as the initial one. 
In particular, exact null differentials remain exact. 
\end{corollary}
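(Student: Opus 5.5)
We would follow the proof of Theorem \ref{th:RH} line by line, replacing conformal minimal immersions $F:\overline X\to\R^n$ by holomorphic null immersions $H:\overline X\to\C^n$, and the real parts of integrals by the full complex integrals. Write $dH=h\theta$ with $h=dH/\theta:\overline X\to Y$. As in \eqref{eq:fg}, $h$ is $G$-equivariant; since $G\subset O(n,\C)$ preserves $\bA_*$ and $Y_0$, the same fibration $\rho:Z=(R\times Y)/G\to R/G$ and the correspondence of Lemma \ref{lem:correspondence} apply without change. The boundary data are a $G$-compatible family of null vectors $\bw_i\in\bA_*$ with $g\bw_i=\bw_j$ whenever $g(I_i)=I_j$, together with $\sigma$ and $r$ as in Theorem \ref{th:RH}. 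The attached discs are
\[
	\varkappa(x,\xi)=H(x)+\sigma(x,r(x)\xi)\,\bw_i,\qquad x\in I_i,
\]
and $\varkappa(x,\xi)=H(x)$ for $x\in bX\setminus I$.

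First, on a representative disc $D_{i_\nu}$ of each $G$-orbit of arcs, we apply the non-equivariant local Riemann--Hilbert construction for null curves \cite[Theorem 6.6.1]{AlarconForstnericLopez2021}. This produces local null data $h_{i_\nu}:D_{i_\nu}\times\cD\to Y$. We transport these data to the other discs by $h_j(x,\xi)=g\,h_{i_\nu}(g^{-1}x,\xi)$, as in \eqref{eq:fj}. This is well defined because $G$ acts freely on the set of arcs, which was observed after Theorem \ref{th:RH}. The identity \eqref{eq:ggamma} holds verbatim for complex integrals, and $g$ is a complex linear isometry-like map for the relevant estimates. Hence the local estimates (i)--(iii) hold on every disc $D_j$, up to a fixed constant depending only on the finite set $\{g\}\subset O(n,\C)$.

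Next, we glue a $G$-equivariant period dominating spray with core $h$ to the $G$-equivariant family of sprays with cores $h_j$. The gluing takes place over the quotient $\overline X/G$ through Lemma \ref{lem:correspondence}, using \cite[Lemma 4.5]{Forstneric2024PLMS} and \cite[Theorem 2.1]{Forstneric2003FM} exactly as before. Jet interpolation of high order is imposed at the points of $\Lambda\supset X_0$, which preserves the pole orders and leading coefficients of $h$ on $X_0$. We then choose the spray parameter so that the member $\tilde h$ satisfies the complex conditions
\[
	\int_C(\tilde h-h)\theta=0,\qquad \int_{x_0}^{gx_0}(\tilde h-h)\theta=0,\qquad \int_{\gamma_a}(\tilde h-h)\theta=0,
\]
for all $[C]\in H_1(X,\Z)$, $g\in G$ and $a\in\Lambda$. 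These are the complex versions of \eqref{eq:RHperiods}--\eqref{eq:RHperiods3}. We set $\wt H(x)=H(x_0)+\int_{x_0}^x\tilde h\theta$.

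The first condition gives that $\tilde h\theta$ has the same complex periods as $dH$; in particular, exact null differentials remain exact, and $\wt H$ is well defined. The second condition yields $g\wt H(x_0)=\wt H(gx_0)$, and together with \eqref{eq:ggamma} this gives $G$-equivariance. The third condition and the jet interpolation give (iv). The nonvanishing of $\tilde h\theta$ makes $\wt H$ an immersion.

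The main obstacle is the spray step: the spray must dominate twice as many real conditions as in the minimal case, namely full complex periods and complex orbit integrals. We would handle it by building the $G$-equivariant period dominating spray exactly as in \cite[Lemma 4.5]{Forstneric2024PLMS}, but with respect to the complex period map. Dominance there is obtained from flows of $O(n,\C)$-invariant complete holomorphic vector fields on $\bA_*$ along curves representing homology classes and orbit paths. Since $\bA_*$ spans $\C^n$ and the core is nonflat, these flows already produce complex-spanning derivatives, so the real-period argument extends directly to the complex one.
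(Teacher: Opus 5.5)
Your proposal is correct and is essentially the paper's proof: the paper reruns the proof of Theorem \ref{th:RH} without taking real parts, and it uses the full complex period map in the spray-correction step (periods over $H_1(X,\Z)$, the orbit integrals $\int_{x_0}^{gx_0}$, and the integrals along the paths $\gamma_a$). One harmless inaccuracy is in your closing gloss: $O(n,\C)$-invariant vector fields on $\bA_*$ are only constant multiples of the radial field $z\mapsto z$, whose flows merely rescale $h$ and keep all periods in the span of $h(\overline X)$, so they cannot dominate if the nonflat core lies in a hyperplane; the domination behind \cite[Lemma 4.5]{Forstneric2024PLMS} must instead come from flows of linear fields $z\mapsto Az$ with $A$ in the Lie algebra of $O(n,\C)$, used compatibly with $G$, but since you only invoke that lemma your argument is unaffected.
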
 
\begin{proof} The proof is the same as that of Theorem \ref{th:RH}, except that one does not take real parts when integrating the null differential. The full complex period map is used in the period-correction step. 
\end{proof}

%
%
%
%
\section{Main lemmas}\label{sec:lemmas}
In this section, we state and prove the main lemmas concerning properness, completeness, and general position that will lead to the proof of the main theorems via standard recursive procedures. The proofs of these lemmas rely on the new tools established in the previous section.

%
%
\subsection{$G$-equivariant properness lemma}\label{sec:properL}
Our proof of Theorem \ref{th:proper} is an adaptation to the 
$G$-equivariant case of the boundary-lifting method, used in the 
construction of proper conformal minimal immersions into convex domains 
in $\R^n$, $n\ge 3$, and in particular to $\R^n$ itself; see 
\cite[Theorem 1.4]{AlarconDrinovecForstnericLopez2015PLMS}.
(Stronger results concerning proper conformal minimal immersions 
into minimally convex domains in $\R^3$ and $(n-2)$-convex 
domains in $\R^n$, $n\ge3$, can be found in 
\cite{AlarconDrinovecForstnericLopez2019TAMS}
and \cite[Sect.~8.4]{AlarconForstnericLopez2021}.)
The proofs of these properness results inductively use
the boundary lifting method furnished by exposing boundary points and the Riemann-Hilbert deformation techniques described in the
previous section. The induction is based on the following boundary lifting 
lemma, whose proof relies on Corollary \ref{cor:curves} and Theorem \ref{th:RH}. The lemma will be also used in the proof of Theorem \ref{th:complete-proper}.

%
%
\begin{lemma}\label{lem:proper}
Let $X$ and $G$ be as in Theorem \ref{th:proper}, let 
$F\in\CMI_{G,*}^1(\overline X,\R^n)$, and assume that for a compact 
$G$-invariant set $K\subset \mathring X$ and numbers $0<r<s$ we have
\begin{equation}\label{eq:lemproper}
	F(\overline {X\setminus K}) \subset s\B \setminus r\overline\B,
\end{equation}
where $\B\subset \R^n$ is the unit ball.
Given a number $\delta$ satisfying $0<\delta<s-r$, 
$F$ can be approximated as closely as desired in $\Cscr^1(K,\R^n)$ by  
$\wt F\in\CMI_{G,*}^1(\overline X,\R^n)$ satisfying the following properties.
\begin{enumerate}[\rm (a)]
\item $\|\wt F-F\|_{0,\overline X} < \sqrt{s^2-r^2}$.
\item $\wt F(\overline {X\setminus K}) \subset s\B \setminus r\overline\B$.
\item $\wt F(bX) \subset s\B \setminus (s-\delta)\overline\B$.
\item $\Flux(\wt F)=\Flux(F)$. 
\end{enumerate}
\end{lemma}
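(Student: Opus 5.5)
We follow the boundary-lifting scheme from the proof of \cite[Lemma 4.1 / Theorem 1.4]{AlarconDrinovecForstnericLopez2015PLMS}, carrying out every construction on $G$-orbits so that Corollary \ref{cor:curves} and Theorem \ref{th:RH} apply. The Pythagorean estimate behind (a) and (b) is the following. If $x\in bX$, $|F(x)|\ge r$, and we move $F(x)$ in a direction orthogonal to $F(x)$ by a vector of length $t$, the norm becomes $\sqrt{|F(x)|^2+t^2}$. Choosing $t$ so that this equals roughly $s-\delta/2$ keeps the displacement below $\sqrt{s^2-r^2}$ and keeps the image outside $r\overline\B$. The difficulty is that at each point the orthogonal complement of $F(x)$ must contain a conformal frame $(\bu,\bv)$. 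Moreover, these frames must be chosen $G$-equivariantly along boundary arcs and vary smoothly.

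\emph{Step 1 (subdivision).} By uniform continuity, choose a $G$-invariant compact set $K'$ with $K\subset \mathring K'$ and $\overline X\setminus K'$ a thin collar. Then split $bX$ into finitely many closed arcs $I_1,\dots,I_k$ with disjoint interiors, forming a $G$-invariant family on which $G$ acts freely. The oscillation of $F$ on each $I_i$ should be tiny compared with $\delta$ and with $s-\max_{bX}|F|$. Pick one arc $I_{i_\nu}$ in each $G$-orbit and a point $x_{i_\nu}$ in it. Since $n\ge3$, the orthogonal complement of $F(x_{i_\nu})$ has dimension at least $2$ and so contains a conformal frame $(\bu_{i_\nu},\bv_{i_\nu})$. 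Transport the frame to the other arcs of the orbit by $\bu_j=g\bu_{i_\nu}$ and $\bv_j=g\bv_{i_\nu}$ when $g I_{i_\nu}=I_j$; this is well defined because the action on the arcs is free. Equivariance of $F$ gives $g F(x_{i_\nu})=F(gx_{i_\nu})$ and $G$ acts orthogonally, so each transported frame is orthogonal to $F$ at the transported point.

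\emph{Step 2 (exposing and then Riemann--Hilbert).} The arcs $I_j$ overlap at their endpoints, so Theorem \ref{th:RH} cannot be applied directly to all of them. We handle this in two steps.

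First, use Corollary \ref{cor:curves}, as in \cite{AlarconDrinovecForstnericLopez2015PLMS}, at a $G$-invariant finite set of division points $p_j\in bX$, attaching short arcs $E_j$ and mapping them by $F_0$ into segments from $F(p_j)$ radially out to norm about $s-\delta/2$. This exposes the division points so that their neighbourhoods go near the sphere $(s-\delta/2)\mathbb S$. Throughout, the nonflat $G$-equivariant generalised immersion on $S=\overline X\cup E$ is extended equivariantly along orbits, and the exposing is equivariant by Theorem \ref{th:exposing}.

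Then apply Theorem \ref{th:RH} on slightly shrunken disjoint arcs $I_j'$ covering the rest of $bX$. Use $\sigma(x,\xi)=c\,\xi$, with $c$ a constant so that the attached discs $\varkappa(x,\overline\D)$ are flat discs of radius about $\sqrt{(s-\delta/2)^2-|F(x)|^2}$ orthogonal to $F(x_j)$. Take $r$ to be a $G$-invariant cutoff that equals $1$ on the interiors of the arcs and vanishes at their ends. Near the ends the boundary is already close to the sphere because of the exposing step, so no lifting is needed there. Interpolation on $X_0\subset K$ and flux preservation are part of both tools, which gives (d).

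\emph{Step 3 (estimates).} Condition (i) of Theorem \ref{th:RH} places $\wt F(bX)$ near $\varkappa(x,\T)$, whose points have norm in $(s-\delta,s)$; this gives (c). Condition (ii) places points of the collar near $\varkappa(\rho(x),\overline\D)$. Every point of such a disc has norm at least $|F(x)|>r$ and at most about $s-\delta/2$, because the disc is orthogonal to $F$ up to small errors; together with the segments from the exposing step, this gives (b). Displacements are at most the disc radius, which is less than $\sqrt{s^2-r^2}$; this gives (a). Condition (iii) gives approximation on $K$. Nonflatness is preserved by taking the approximation close enough on $K$.

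\emph{Main obstacle.} The delicate step is the bookkeeping at the arc endpoints in Step 2. Exposing and the Riemann--Hilbert deformation must be combined so that the transition regions, where $r<1$, still satisfy (b) and (c), and the errors accumulated from the nonexact orthogonality on each arc must remain below the slack between $\sqrt{(s-\delta/2)^2-r^2}$ and $\sqrt{s^2-r^2}$. I would first try fine subdivisions plus the exposing step as in the nonequivariant proof; equivariance itself adds only the choice of orbit representatives.
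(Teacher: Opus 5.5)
This is essentially the paper's proof: a $G$-invariant subdivision of $bX$ into small arcs permuted freely by $G$, radial exposure of the division points to a common sphere of radius in $(s-\delta,s)$ via Corollary \ref{cor:curves}, and then Theorem \ref{th:RH} on the shrunken arcs with conformal frames orthogonal to $F$, chosen on orbit representatives and transported by $G$. The one step to tighten is (a). Your bound ``displacement $\le$ disc radius'' ignores the radial displacement added by the exposing step. Also, the cutoff on the transition arcs must be adapted to $|F'|$ rather than being $\equiv 1$ on the arc interiors, since otherwise the discs overshoot $s\B$ where the boundary is already partly lifted. The paper avoids this bookkeeping by arranging $\langle \wt F(x)-F(x),F(x)\rangle>-\mu$ on $bX$, which with $\wt F(bX),F(bX)\subset s\overline\B\setminus(r+\delta)\B$ gives $|\wt F-F|<\sqrt{s^2-r^2}$ on $bX$; it then applies the maximum principle to the harmonic map $\wt F-F$.
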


The main point in the lemma is condition (c), which asserts that $\wt F(bX)$ 
lies near the boundary of the bigger ball $s\B$ when $\delta>0$ is small.

%
%
\begin{proof}
By Remark \ref{rem:XG}, we may assume that $X$ is a relatively compact 
$G$-invariant domain with real analytic boundary in an open Riemann surface 
$R$, the group $G$ acts on $R$ by holomorphic automorphisms, 
and the action is free on $R\setminus X$. By \eqref{eq:lemproper} 
and compactness of $bX$, we may also assume by decreasing 
$\delta>0$ if necessary that $r+2\delta<s$ and
\begin{equation}\label{eq:lemproper2}
	F(\overline {X\setminus K}) \subset s\B\setminus (r+\delta)\overline \B.
\end{equation}
Denote by $\langle\cdot,\cdot\rangle$ the Euclidean scalar product in $\R^n$. 
Fix a number $\mu$ with $0<\mu<\delta$ so small that the following holds: 
\begin{equation}\label{eq:mu}
	\text{If \; $p,q\in s\overline \B\setminus (r+\delta)\B$ \; and  
	\;$\langle p-q,q\rangle>-\mu$, \; then \; $|p-q|<\sqrt{s^2-r^2}$.}
\end{equation}
Such $\mu$ exists by Pythagoras' theorem. To prove the lemma, 
we shall provide $\wt F\in\CMI_{G,*}^1(\overline X,\R^n)$, 
as close to $F$ as desired in $\Cscr^1(K,\R^n)$, satisfying conditions 
(b), (c), (d), and
\begin{equation}\label{eq:mu2}
	\langle \wt F(x)-F(x),F(x)\rangle>-\mu\quad \text{for all }x\in bX.
\end{equation}
This, \eqref{eq:lemproper2}, and \eqref{eq:mu} imply that 
$|\wt F(x)-F(x)|<\sqrt{s^2-r^2}$ for all $x\in bX$, which ensures (a) 
by the maximum principle. We will complete the task in two deformation steps.
 
\noindent{\bf First deformation.} Fix a number $\epsilon_0>0$ to be specified later. 
Let $\Jcal$ be a finite collection of closed arcs in $bX$ satisfying the following 
conditions.
\begin{enumerate}[(i)]
\item $bX=\bigcup_{I\in \Jcal} I$ and each connected component of $bX$ 
contains at least three arcs in $\Jcal$.
\item $I\cap I'$ is either empty or a singleton for every pair of arcs $I\neq I'\in\Jcal$. 
\item $g(I)\cap I=\varnothing$ and $g(I)\in \Jcal$ for every $I\in \Jcal$ and 
$g\in G\setminus\{1\}$. In particular, $\Jcal$ is $G$-invariant.
\item If $I\in\Jcal$ and $x,y\in I$, then $|F(x)-F(y)|<\epsilon_0$.
\end{enumerate}
Such a collection exists by compactness of $bX$ and continuity of $F$.
It follows that if $x\in I\cap I'$ with $I\neq I'\in\Jcal$, then $x$ is a common 
endpoint of $I$ and $I'$. Moreover, each arc in $\Jcal$ intersects exactly two 
other arcs in $\Jcal$. Denote by $P\subset bX$ the finite set formed by the 
endpoints of the arcs in $\Jcal$, that is, 
\[
	P=\{x\in bX : \exists I\neq I'\in\Jcal,\; I\cap I'= 
	\{x\}\}.
\] 
Write $P=\{x_1,\ldots, x_m\}$ and note that $P$ is a $G$-invariant set. 
Let $E_1,\ldots,E_m$ be smoothly embedded pairwise disjoint compact arcs 
in $R$ such that for each $i=1,\ldots,m$, $E_i$ is attached to $\overline X$ 
transversely at $x_i$, $E_i\cap\overline X=\{x_i\}$, and the set 
$E=\bigcup_{i=1}^m E_i$ is $G$-invariant. Let $y_i$ denote the other endpoint 
of $E_i$, set $Q=\{y_1,\ldots,y_m\}$, and consider the $G$-invariant 
admissible set
\[
	S=\overline X\cup E\subset R.
\]
For each $i\in\{1,\ldots,m\}$, choose a number $\lambda_i>1$ such that 
\begin{equation}\label{eq:lambdai}
	s-\delta<\lambda_i|F(x_i)|<s
	\quad\text{and}\quad \lambda_i|F(x_i)|=\lambda_j|F(x_j)|\quad \text{for all }i,j.
\end{equation}
Next, given $\epsilon_1>0$ to be specified later, we extend $F$, with the same 
name, to a nonflat $G$-equivariant generalised conformal minimal immersion 
$F\in \GCMI^1_{G,*}(S,\R^n)$ such that for every $i\in\{1,\ldots,m\}$,
\begin{equation}\label{eq:Eextension}
	d^H\big(F(E_i),[F(x_i),\lambda_iF(x_i)]\big)<\epsilon_1
	\quad\text{and}\quad
	F(y_i)=\lambda_iF(x_i),
\end{equation}
where $d^H$ denotes the Hausdorff distance in $\R^n$ and $[p,q]$ the straight 
segment connecting a pair of points $p,q\in\R^n$. As usual, we first extend $F$ 
to a representative arc $E_i\subset E$ in each family (this is possible by 
\cite[Lemma 3.5.4]{AlarconForstnericLopez2021}), and then we extend $F$ 
to every translate $g(E_i)$ of $E_i$ by equivariance: $F(gx)=gF(x)$ for all 
$x\in E_i$. Note that 
\[
	g\big([F(x_i),\lambda_iF(x_i)]\big)=[gF(x_i),\lambda_igF(x_i)]
	\quad \text{ for every $g\in G$}
\]
and, by \eqref{eq:lemproper2},
\begin{equation}\label{eq:[]}
	[F(x_i),\lambda_iF(x_i)]\subset s\B\setminus (r+\delta)\overline\B.
\end{equation}
In view of \eqref{eq:lemproper2}, \eqref{eq:lambdai}, and \eqref{eq:Eextension}, 
and assuming that $\epsilon_1>0$ is sufficiently small, we have that 
\begin{equation}\label{eq:>-mu/2}
	F(E)\subset s\B\setminus (r+\delta)\overline\B  	
	\quad\text{ and }\quad
	\langle F(x)-F(x_i),F(x_i)\rangle>-\frac{\mu}2\quad 
	\text{for all }x\in E_i,\; i=1,\ldots,m.
\end{equation}
Given a number $\epsilon_2>0$, Corollary \ref{cor:curves} then provides a 
nonflat $G$-equivariant conformal minimal immersion $F':\overline X\to\R^n$ 
and a $G$-invariant collection of (big) closed subarcs 
$\{J_I\subset \mathring I: I\in \Jcal\}$ satisfying the following.
\begin{enumerate}[\rm (AA)]
\item[(v)] $F'$ is as close to $F$ as desired in $\Cscr^1(K,\R^n)$.
\item[(vi)] $F'(x_i)=\lambda_iF(x_i)\in s\B\setminus (s-\delta)\overline\B$ 
for every $i\in\{1,\ldots,m\}$; see \eqref{eq:lambdai} and \eqref{eq:Eextension}.
\item[(vii)] $|F'(x)-F(x)|<\epsilon_2$ for all $x\in \bigcup_{I\in\Jcal}J_I$.
\item[(viii)] $\langle F'(x)-F(x),F(x)\rangle>-\mu/2$ for all 
$x\in bX\setminus  \bigcup_{I\in\Jcal}J_I$; see \eqref{eq:>-mu/2}
\item[(ix)] $d^H\big(F'(\Gamma_i),[F(x_i),\lambda_iF(x_i)]\big)<\epsilon_1$ 
for every $i\in\{1,\ldots,m\}$, where $\Gamma_i\subset bX$ is the closure of 
the connected component of $bX\setminus  \bigcup_{I\in\Jcal} J_I$ 
that contains the point $x_i$.
\item[(x)] $F'(\overline X\setminus K)\subset s\B\setminus (r+\delta)\overline\B$; 
see \eqref{eq:lemproper2} and \eqref{eq:>-mu/2}.
\item[(xi)] $\Flux_{F'}=\Flux_F$.
\end{enumerate}
Assuming that $\epsilon_2>0$ is sufficiently small, we infer from (vii) and (viii) that
\begin{equation}\label{eq:F'>-mu/2}
	\langle F'(x)-F(x),F(x)\rangle>-\frac{\mu}2\quad \text{for all }x\in bX.
\end{equation}
On the other hand, condition (vi) guarantees the existence of a $G$-equivariant 
collection of (bigger) closed subarcs $\{J_I'\subset \mathring I\colon I\in\Jcal\}$ 
such that $J_I\subset \mathring J_I'$ for every $I\in\Jcal$ and
\begin{equation}\label{eq:JI'}
	F'(x)\in s\B\setminus (s-\delta)\overline\B\quad \text{for all }
	x\in bX\setminus\bigcup_{I\in\Jcal}\mathring J_I'.
\end{equation}
Therefore, $F'\in\CMI_{G,*}^1(\overline X,\R^n)$ formally satisfies conditions 
(b), (d), and \eqref{eq:mu2}, but it only meets (c) at the boundary points outside 
$\bigcup_{I\in\Jcal} \mathring J_I'$.

\noindent{\bf Second deformation.} Set $\Upsilon=\bigcup_{I\in\Jcal}J_I$ and 
$\Upsilon'=\bigcup_{I\in\Jcal}J_I'$, and denote by $\Gamma_i'\subset bX$ the 
closure of the connected component of 
$bX\setminus  \bigcup_{I\in\Jcal}\mathring J_I'$ that contains the point $x_i$ 
(hence, $\Gamma_i'\subset\mathring\Gamma_i$). For each $I\in\Jcal$ fix a point 
$z_I\in\mathring I$ making sure that the set $\{z_I\colon I\in\Jcal\}$ is $G$-invariant.
Assuming that the fixed positive numbers $\epsilon_0$, $\epsilon_1$, and 
$\epsilon_2$ are sufficiently small, for every $I\in\Jcal$ there exist 
\begin{itemize}
\item a pair of orthogonal unit vectors $(\bu_I,\bv_I)$ in $\R^n$, that are 
orthogonal to $F'(z_I)\neq 0$,  such that if for some $g\in G$ we have $g(I)=I'$ then 
$g\bu_I=\bu_{I'}$ and $g\bv_I=\bv_{I'}$,
\item a continuous $G$-equivariant function $\tau:bX\to[0,1]$ supported on 
$\Upsilon'$, and
\item a $\Cscr^1$ function $\sigma:\Upsilon'\times\overline\D\to \C$ which is 
$G$-invariant in the first variable, for every $x\in\Upsilon'$ the function 
$\sigma(x,\cdot)$ is holomorphic in $\D$, $\sigma(x,0)=0$, and 
$\frac{\partial\sigma}{\partial\xi}$ is nowhere vanishing on 
$\Upsilon'\times\overline\D$, where $\xi$ is the second variable,
\end{itemize}
such that the continuous map
$\varkappa:bX \times\overline{\D}\to\R^n$ given by
\begin{equation}\label{eq:varkappa-lem}
	\varkappa(x,\xi)=
	\begin{cases}
		F'(x),  & \text{if}\ x \in bX\setminus \Upsilon'; \\
		F'(x) + \Re \sigma(x,\tau(x)\xi) \bu_I+ \Im \sigma(x,\tau(x)\xi)  \bv_I, &
		\text{if}\ x \in J_I',\; I\in\Jcal
	\end{cases}
\end{equation}
satisfies the following conditions.
\begin{enumerate}[(I)]
\item $\varkappa(x,\xi)\in s\B\setminus(s-\delta)\overline\B$ for all $x\in bX$, 
$\xi\in\T$.
\item $\varkappa(bX\times\overline\D)\subset s\B\setminus(r+\delta)\overline\B$. 
\end{enumerate}
Indeed, take into account \eqref{eq:[]}, \eqref{eq:JI'}, and that choosing 
$\epsilon_0$, $\epsilon_1$, and $\epsilon_2$ small enough, if $x_i$ is an endpoint 
of $I\in\Jcal$ then $F'(I)$ is as close as desired to $[F(x_i),\lambda_iF(x_i)]$ in Hausdorff distance; see conditions (iv), (vii), and (ix). 
(Note that if these numbers are small and $x_j$ is the other endpoint of $I$, 
then $[F(x_i),\lambda_iF(x_i)]$ and $[F(x_j),\lambda_jF(x_j)]$ are close in Hausdorff 
distance by the aforementioned conditions and the second property in 
\eqref{eq:lambdai}). As above, we first define the objects on a representative arc 
$I\in\Jcal$ in each family, and then on every translate $g(I)$ of $I$ by equivariance.

In this situation, Theorem \ref{th:RH} provides a nonflat $G$-equivariant conformal 
minimal immersion $\wt F:\overline X\to\R^n$ satisfying the following conditions.
\begin{enumerate}[(AA)]
\item[(III)] $\wt F$ is as close as desired to $F'$ in $\Cscr^1(K,\R^n)$.
\item[(IV)] $\wt F(bX)\subset s\B\setminus(s-\delta)\overline\B$; see (I). 
\item[(V)] $\langle \wt F(x)-F(x),F(x)\rangle>-\mu$ for all $x\in bX$; see 
\eqref{eq:F'>-mu/2} and (iv), recall that $\bu_I$ and $\bv_I$ are orthogonal to 
$F'(z_I)$ for every $I\in\Jcal$, and assume that $\epsilon_0>0$ is sufficiently small.
\item[(VI)] $\wt F(\overline X\setminus K)\subset s\B\setminus r\overline\B$; 
see (x) and (II).
\item[(VII)] $\Flux_{\wt F}=\Flux_{F'}$.
\end{enumerate}
It follows that $\wt F$ is as close as desired to $F$ in $\Cscr^1(K,\R^n)$ 
(see (v) and (III)) and it satisfies conditions (b), (c), (d) (see (xi) and (VII)), and 
\eqref{eq:mu2}, thereby proving the lemma.
\end{proof}

%
%
\subsection{$G$-equivariant completeness lemma}\label{sec:completenessL}

Given a conformal minimal immersion $F:\overline X\to\R^n$, we denote by 
$\dist_F$ the distance function on $\overline X$ induced by the Riemannian 
metric $F^*ds^2$. If $A,B\subset\overline X$, we write 
\[ 
	\dist_F(A,B) = \inf\{\dist_F(x,y):x\in A,\ y\in B\}. 
\] 
The main ingredient in the proof of Theorems \ref{th:complete} and 
\ref{th:complete-proper} (together with Lemma \ref{lem:proper} for the latter) 
is the following equivariant 
version of \cite[Lemma 4.1]{AlarconDrinovecForstnericLopez2015PLMS}. 
It asserts that every $G$-equivariant conformal minimal immersion 
$\overline X\to\R^n$ of a compact bordered Riemann surface
can be approximated as closely as desired in $\Cscr^0(\overline X)$ by 
$G$-equivariant conformal minimal immersions 
with arbitrarily large intrinsic diameter. 

%
%
\begin{lemma}\label{lem:equivariant-distance} 
Let $X$, $G$, and $n$ satisfy the hypotheses of Theorem \ref{th:complete}, 
and let $F:\overline X\to\R^n$ be a nonflat $G$-equivariant conformal minimal 
immersion of class $\Cscr^1(\overline X)$. Fix a point $x_0\in X$, a compact 
$G$-invariant set $K\subset X$, and numbers $\lambda>0$ (large) and 
$\epsilon>0$ (small).  
Then there exists a nonflat $G$-equivariant conformal minimal 
immersion $\widetilde F:\overline X\to\R^n$ of class 
$\Cscr^1(\overline X)$ satisfying the following conditions. 
\begin{enumerate}[\rm (a)] 
\item $\|\widetilde F-F\|_{\Cscr^0(\overline X)}<\epsilon$.
\item $\|\widetilde F-F\|_{\Cscr^1(K)}<\epsilon$.
\item $\dist_{\widetilde F}(x_0,bX)>\lambda$.
\item $\Flux_{\widetilde F}=\Flux_F$.
\end{enumerate} 
\end{lemma}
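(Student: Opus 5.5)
We follow the scheme of \cite[Lemma 4.1]{AlarconDrinovecForstnericLopez2015PLMS}, with the non-equivariant Riemann--Hilbert step replaced by Theorem \ref{th:RH}. All choices are made on representatives of the $G$-orbits of boundary arcs and then transported by $G$; this is consistent because $G$ acts freely on $bX$, hence freely on any $G$-invariant family of arcs as above. Set $\eta=\epsilon/N$, where $N\in\N$ is large and fixed later. We perform $N$ successive deformations $F=F_0,F_1,\ldots,F_N=\widetilde F$, each given by Theorem \ref{th:RH}, and we require
\[
\|F_j-F_{j-1}\|_{\Cscr^0(\overline X)}<\eta,\qquad
\dist_{F_j}(x_0,bX)\ \ge\ \dist_{F_{j-1}}(x_0,bX)+c/\sqrt N-\text{(small)},
\]
for a constant $c>0$ depending only on $F$. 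The resulting total increase is of order $c\sqrt N$, which exceeds $\lambda$ when $N$ is large, while the $\Cscr^0$ displacement stays below $\epsilon$. This is the classical orthogonal-push mechanism of the Nadirashvili and Jorge--Xavier type: one step moves the map by about $1/\sqrt N$ in a direction orthogonal to the previous position, and Pythagoras keeps the sum of $N$ such orthogonal displacements within $O(1)$.

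For a single step we proceed as follows. Start from $F_{j-1}$, which is nonflat and $G$-equivariant. Choose a $G$-invariant compact set $K_j\supset K\cup\{x_0\}$ such that $\overline X\setminus \mathring K_j$ is a thin collar along $bX$. Subdivide $bX$ into a $G$-invariant family $\Jcal$ of short arcs, exactly as in conditions (i)--(iii) of the proof of Lemma \ref{lem:proper}, so short that $F_{j-1}$ and its unit normal frame vary very little on each arc. On every representative arc $I$, choose a conformal frame $(\bu_I,\bv_I)$ of vectors of length $\sim 1/\sqrt N$ that are orthogonal to $F_{j-1}(z_I)$ and approximately normal to $dF_{j-1}$ near $I$. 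Nonflatness together with $n\ge3$ allows this after a small perturbation. Transport the frame to $g(I)$ by $g$; the equivariance identity $g\bu_I=\bu_{gI}$ holds by construction. Take $\sigma(x,\xi)=\xi$ and a $G$-invariant cutoff $r$ supported on the union of slightly shrunk arcs. Theorem \ref{th:RH} then yields $F_j$ that is $\Cscr^1$-close to $F_{j-1}$ off a small neighbourhood $\Omega$ of the arcs and that keeps the flux. Moreover, each boundary point $x$ is mapped near the circle $F_{j-1}(x)+\frac1{\sqrt N}\T\cdot(\bu_I,\bv_I)$, and points of $\Omega$ are mapped near the corresponding discs. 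This gives (b) and (d), and by the maximum principle and orthogonality also gives $\|F_j-F_{j-1}\|_0\lesssim 1/\sqrt N$ cumulatively.

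We expect the intrinsic distance estimate to be the main obstacle. One must show that any path $\gamma$ from $K_j$ to $bX$ has $F_j$-length at least $\dist_{F_{j-1}}$-length plus roughly $c/\sqrt N$. Paths crossing $\Omega$ radially are handled by property (ii): they traverse nearly a full radius of the attached disc, and that radius is orthogonal to the $dF_{j-1}$ directions. Paths that slip through the gaps between consecutive arcs are the real difficulty. Following \cite{AlarconDrinovecForstnericLopez2015PLMS}, we would handle these by a two-stage construction. In the first stage we apply Theorem \ref{th:RH} on one $G$-invariant family of arcs. In the second stage we apply it on a second $G$-invariant family whose arcs cover the endpoints of the first family, chosen after the first deformation so that the new normal directions are orthogonal to $F$ and to the previous push. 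Every path to $bX$ must then cross a long strip in which the map is stretched transversally. Both stages are equivariant by the same transport-by-$G$ rule, and since the length estimates are pointwise and $G$ acts by isometries of $F_j^*ds^2$, it suffices to verify them on representatives. Conditions (a)--(d) follow after summing the $N$ steps, and nonflatness is preserved because every step is a small perturbation on $K$.
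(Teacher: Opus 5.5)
\textbf{Your outer scheme is right in spirit.} The paper obtains the lemma by iterating Lemma~\ref{lem:equivariant-distance-2}. It fixes a reference map $\Phi$ (a slight perturbation of $F|_{bX}$, so that $F-\Phi\neq 0$ on $bX$) and takes steps $\eta_j$ with $\sum_j\eta_j^2$ small and $\sum_j\eta_j>\lambda$; your $N$ pushes of size $c/\sqrt N$ are the same mechanism.

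Several details are wrong, though. The Pythagorean control needs each disc to be orthogonal to the \emph{accumulated displacement} $F_{j-1}(z_I)-\Phi(z_I)$, not to the position vector $F_{j-1}(z_I)$. Orthogonality to the position vector is the mechanism of Lemma~\ref{lem:proper}: it bounds $|F_N|$, but $|F_N-F|$ can then grow like $\sum_j\eta_j\sim c\sqrt N$ by moving tangentially along large spheres, so (a) fails. Your per-step bound $\|F_j-F_{j-1}\|_0<\epsilon/N$ is also incompatible with pushes of size $\sim 1/\sqrt N$. Finally, a $2$-plane ``approximately normal to $dF_{j-1}$'' does not exist in $\R^3$. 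It is not needed anyway, since the paper uses only chord estimates.

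\textbf{The genuine gap is the endpoint problem.} You correctly flag it as the main difficulty, but your second Riemann--Hilbert stage does not solve it, and it is not what \cite{AlarconDrinovecForstnericLopez2015PLMS} does; that paper uses exposing of boundary points, in its Lemma~4.3.

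To keep the $C^0$ control, the discs of both stages must lie in the orthogonal complement of the nearly constant vector $F(z_I)-\Phi(z_I)$. In $\R^3$ that complement is a single $2$-plane, so you cannot also make the second push orthogonal to the first; that would require $n\ge 5$. Theorem~\ref{th:RH} gives no control over where on the circle $\varkappa(x,\T)$ a boundary point lands, so on the overlaps the two pushes can cancel in exactly the chord estimates you need.

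In the paper, the chord estimate at the entry point of a collar works because $F'$ varies by less than $\epsilon_0$ on each arc $I$ (property (vi)). Your second-family arcs must have their endpoints where the first-stage radius is already full. They therefore have to cross the transition zones where $F_1$ winds around circles of radius up to $\eta$, in the same plane as the second push. The estimate breaks there, and subdividing further only creates new endpoints inside those zones. So the claim that ``every path to $bX$ must cross a long strip in which the map is stretched transversally'' is unsupported.

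\textbf{The missing idea is the paper's first deformation.} Attach arcs $E_i$ at the endpoints $x_i$ of the arcs. Extend $F$ over them as a $G$-equivariant generalised conformal minimal immersion that oscillates along the direction $F(x_i)-\Phi(x_i)$, with tiny extrinsic diameter but projected total variation greater than $\eta$. Then expose the points $x_i$ using Corollary~\ref{cor:curves}, working on orbit representatives and transporting by $G$. This gives neighbourhoods $W_i$ of the gaps $\Gamma_i$ such that every path from $bW_i\cap X$ to $\Gamma_i$ has projected length greater than $\eta$, at negligible $C^0$ cost.

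Only then is Theorem~\ref{th:RH} applied, on the large arcs $J'_I$, with discs orthogonal to $F(z_I)-\Phi(z_I)\approx F(x_i)-\Phi(x_i)$. Because the spike direction is orthogonal to the disc planes, the Riemann--Hilbert step does not destroy the projected length near the gaps. This works for every $n\ge 3$. Without this step, or a substitute that genuinely controls the overlaps, your single-step estimate $\dist_{F_j}(x_0,bX)\ge \dist_{F_{j-1}}(x_0,bX)+c/\sqrt N-\text{(small)}$ is not established.
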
 

The main condition is (c), which shows that the intrinsic distance from an interior point to the boundary of $X$ in the metric induced by $\wt F$ can be made as 
big as desired. The proof of Lemma \ref{lem:equivariant-distance} follows by a 
finite recursive application of the technical lemma below, in exactly the same way 
that \cite[Lemma 4.1]{AlarconDrinovecForstnericLopez2015PLMS} follows from 
\cite[Lemma 4.2]{AlarconDrinovecForstnericLopez2015PLMS}; hence, we omit 
the details.

%
%
\begin{lemma}\label{lem:equivariant-distance-2}
Let $X$, $G$, $n$, $F$, $x_0$, and $K$ be as in 
Lemma \ref{lem:equivariant-distance}. 
Let $\Phi:bX\to\R^n$ be a continuous $G$-equivariant 
map and $\delta>0$ and $d>0$ be numbers such that
\begin{equation}\label{eq:lemcomplete}
	\|F-\Phi\|_{0,bX}<\delta
	\quad\text{ and }\quad 0<d<\dist_F(x_0,bX).
\end{equation}
Given $\eta>0$, $F$ can be approximated as closely as desired in 
$\Cscr^1(K,\R^n)$ by $\wt F\in\CMI_{G,*}^1(\overline X,\R^n)$ satisfying the 
following properties.
\begin{enumerate}[\rm (a)]
\item $\|\wt F-\Phi\|_{0,bX} < \sqrt{\delta^2+\eta^2}$.
\item $\dist_{\wt F}(x_0,bX)>d+\eta$.
\item $\Flux(\wt F)=\Flux(F)$. 
\end{enumerate}
\end{lemma}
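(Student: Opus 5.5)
We follow the proof of \cite[Lemma 4.2]{AlarconDrinovecForstnericLopez2015PLMS}, making every choice on one representative of each $G$-orbit and transporting it by the group action. The only analytic tool needed is the equivariant Riemann--Hilbert deformation, Theorem \ref{th:RH}. It is applied twice, in two orthogonal families of directions, so that each boundary point ends up far from $x_0$ in the intrinsic metric while it moves by at most about $\eta$ in $\R^n$.

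\textbf{Step 1 (equivariant partition and frames).} We split $bX$ into small closed arcs, exactly as in the first deformation in the proof of Lemma \ref{lem:proper}. The family consists of two subfamilies $\{I_j\}$ and $\{I'_j\}$ of alternating arcs. Each subfamily is $G$-invariant with free $G$-action, and the union of all arcs covers $bX$. We fix points $z_j\in\mathring I_j$ and $z'_j\in\mathring I'_j$ in $G$-invariant sets, and we require $F$ and $\Phi$ to oscillate by less than a small number $\epsilon_0$ on each arc.

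On a representative arc we choose a conformal frame $(\bu_j,\bv_j)$ of vectors of length $\eta$ with $\bu_j,\bv_j\perp F(z_j)-\Phi(z_j)$. We extend the frames to the whole orbit by $g\bu_j=\bu_{g(j)}$ and $g\bv_j=\bv_{g(j)}$; this is consistent because $\Phi$ and $F$ are equivariant and $G$ acts orthogonally. We then take $\sigma(x,\xi)=\xi$ and a $G$-invariant cutoff $r$ supported on $\bigcup I_j$ and equal to $1$ on large subarcs. By Pythagoras, the attached discs $\varkappa(x,\overline\D)$ stay within $\sqrt{\delta^2+\eta^2}$ of $\Phi(x)$, up to errors of order $\epsilon_0$. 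This is the source of estimate (a).

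\textbf{Step 2 (first RH deformation).} Theorem \ref{th:RH} produces a $G$-equivariant $F_1$ with the following properties. It is $\Cscr^1$-close to $F$ off a thin neighbourhood $\Omega_1$ of $\bigcup I_j$. Its flux is unchanged, and it interpolates on $\Lambda\supset X_0\cup\{x_0\}$. Its boundary values lie near the circles $\varkappa(x,\T)$. On $\Omega_1$ it stays near the discs $\varkappa(\rho(x),\overline\D)$.

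The key metric consequence, as in \cite{AlarconDrinovecForstnericLopez2015PLMS}, is that every path in $\Omega_1$ from the inner boundary of $\Omega_1$ to $I_j$ (with $r=1$ there) has $F_1$-length at least about $\eta$. The reason is that its image must travel from near the centre of the disc to near its boundary circle, and these are at Euclidean distance about $\eta$.

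\textbf{Step 3 (second RH deformation).} We repeat the construction on the arcs $I'_j$, starting from $F_1$. The new frames are chosen orthogonal to $F_1(z'_j)-\Phi(z'_j)$, and also orthogonal to $\bu_j,\bv_j$ for the neighbouring $I_j$ (possible since $n\ge 3$; otherwise we take the frames close to orthogonal). The neighbourhood $\Omega_2$ is taken so thin that the gain obtained on $\Omega_1\setminus\Omega_2$ survives.

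The resulting $\wt F$ is $G$-equivariant by Theorem \ref{th:RH}. It has the flux of $F$, which gives (c). It is close to $F$ in $\Cscr^1(K)$, and it remains nonflat after sufficiently close approximation. We verify (b) by splitting any path from $x_0$ to $bX$ into a part outside $\Omega_1\cup\Omega_2$, which has length at least about $d$ because $\dist_F(x_0,bX)>d$ and $\wt F$ is $\Cscr^1$-close to $F$ there, and a final part crossing one of the collars, which contributes about $\eta$. The term $\eta$ in (b) is obtained after slightly enlarging the frames, which is allowed since the inequalities in \eqref{eq:lemcomplete} are strict.

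\textbf{Main obstacle.} The delicate point is the set of endpoints shared by adjacent arcs, where $r$ must vanish. Paths reaching $bX$ near these points must still pick up length $\eta$, which is why two alternating families with overlapping supports and the second, transversal deformation are needed. The bookkeeping is the non-equivariant bookkeeping of \cite[proof of Lemma 4.2]{AlarconDrinovecForstnericLopez2015PLMS}, done on orbit representatives. Since $G$ acts freely on the arcs and preserves all chosen data, the estimates transfer verbatim to every translate.
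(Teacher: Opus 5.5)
\textbf{There is a genuine gap.} Your proposal breaks down exactly at the point you flag as the main obstacle. Theorem~\ref{th:RH} only deforms along proper arcs, with a cutoff that vanishes at their endpoints. You therefore cover the endpoint regions of $\{I_j\}$ by a second Riemann--Hilbert step on an overlapping family $\{I'_j\}$. But the two steps cannot be controlled simultaneously on the overlaps, for the following reasons.
\begin{itemize}
\item On the parts of $I'_j$ lying under the first collar, $F_1$ is far from constant: its boundary values wind around circles of radius up to $\eta$. So $F_1(y)-\Phi(y)$ varies by about $2\eta$ along $I'_j$. A frame orthogonal to $F_1(z'_j)-\Phi(z'_j)$ at a single point therefore gives no approximate orthogonality to $F_1(y)-\Phi(y)$, and the Pythagoras estimate behind (a) is lost.
\item Requiring in addition $\bu'_j,\bv'_j\perp\bu_j,\bv_j$ asks for a $2$-plane orthogonal to a $3$-dimensional subspace. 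No such plane exists when $n=3,4$, and ``close to orthogonal'' does not help.
\item Even for large $n$, the squared displacements add: $|\wt F(y)-\Phi(y)|^2\approx|F(y)-\Phi(y)|^2+(r(y)^2+r'(y)^2)\eta^2$. This may exceed $\delta^2+\eta^2$ where both cutoffs are close to $1$. If you impose $r^2+r'^2\le 1$ instead, neither step alone yields length $\eta$ in the transition zones. In $\R^3$, with both frames in $(F-\Phi)^\perp$, the two displacements can even cancel.
\item The length argument for (b) needs the map to be nearly constant on each arc before the deformation, so that the inner boundary of the collar component over that arc is mapped near a fixed point. This holds for $F$ in your first step, but not for $F_1$ on $I'_j$ in the second. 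Moreover, $\Omega_1\cup\Omega_2$ is a connected collar along each boundary curve, so a path may enter it far from its endpoint.
\end{itemize}
Thus neither (a) nor (b) is established near the overlaps.

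\textbf{The missing idea is the exposing step, Corollary~\ref{cor:curves}.} This is also what the proof of \cite[Lemma 4.2]{AlarconDrinovecForstnericLopez2015PLMS} uses, through Lemma 4.3 and Claim 4.4 there. So appealing to its bookkeeping does not cover a second Riemann--Hilbert deformation. At each endpoint $x_i$ of the partition, the paper does the following.
\begin{itemize}
\item It attaches an arc $E_i$ and extends $F$ over it with small extrinsic diameter but highly oscillating, so that the projection onto $F(x_i)-\Phi(x_i)$ has total variation $>\eta$.
\item It then exposes $x_i$ equivariantly along $E_i$. The resulting $F'$ is uniformly close to $F$, so the endpoint regions cost nothing in (a). At the same time, every path reaching $bX$ near $x_i$ has projected length $>\eta$.
\item A single Riemann--Hilbert step on the large subarcs follows, with frames orthogonal to $F(z_I)-\Phi(z_I)$. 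It gives displacement about $\eta$ there together with the Pythagoras bound. Since its directions are almost orthogonal to $F(x)-\Phi(x)$ on each arc, it preserves the projected lengths near the $x_i$ up to small errors.
\end{itemize}
Only the single direction $F-\Phi$ has to be avoided, so this works for every $n\ge 3$.
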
 

%
%
\begin{proof}
The proof consists of adapting that of 
\cite[Lemma 4.2]{AlarconDrinovecForstnericLopez2015PLMS} to the 
equivariant case by using similar ideas as those in the proof of 
Lemma \ref{lem:proper}. 
The proof follows a similar scheme to that of the proof of Lemma 
\ref{lem:proper}, consisting of two deformation steps. We proceed 
to explain it.

First of all, by Remark \ref{rem:XG}, we may assume that $X$ is a relatively compact 
$G$-invariant domain with real analytic boundary in an open Riemann surface $R$, 
that $G$ acts on $R$ by holomorphic automorphisms, and that $G$ acts freely on 
$R\setminus X$. We may also assume by a slight perturbation of $\Phi$ that
\begin{equation}\label{eq:F-Phineq0}
	\text{$F-\Phi$ vanishes nowhere on $bX$}.
\end{equation}
Finally, we can also assume without loss of generality that $K$ is a smoothly 
bounded compact domain that is a strong deformation retract of $X$ with $x_0\in\mathring K$ and, in view of  
\eqref{eq:lemcomplete}, there is a number $\mu_0>0$ (small) such that
\begin{equation}\label{eq:distx0bK}
	\|F-\Phi\|_{0,bX}<\delta-\mu_0
	\quad\text{ and }\quad \dist_F(x_0,bK)>d+\mu_0.
\end{equation}

\noindent{\bf First deformation.} 
Fix $\epsilon_0>0$ to be specified later 
and choose a collection $\Jcal$ of closed arcs in $bX$ satisfying conditions 
(i)--(iv) in the proof of Lemma \ref{lem:proper}. In addition, we take the arcs 
so small that
\begin{equation}\label{eq:Phie0}
	|\Phi(x)-\Phi(y)|<\epsilon_0\quad \text{ for all }x,y\in I,\; I\in\Jcal.
\end{equation}
We define 
\[
	P=\{x_1,\ldots,x_m\},\quad 
	E=\bigcup_{i=1}^mE_i,
	\quad Q=\{y_1,\ldots,y_m\},
	\quad S=\overline X\cup E\subset R
\]
as in that proof. 
Arguing as in the first deformation step in the proof of Lemma \ref{lem:proper}, 
we extend $F$ to a nonflat $G$-equivariant generalised conformal minimal 
immersion $F\in \GCMI^1_{G,*}(S,\R^n)$ such that for every 
$i\in\{1,\ldots,m\}$, the curve $F(E_i)$ has small extrinsic diameter 
(that is, it is contained in a small ball in $\R^n$ centred at $F(x_i)$)
but its projection onto the vector $F(x_i)-\Phi(x_i)\neq 0$
(see \eqref{eq:F-Phineq0}) has total variation bigger than $\eta$. For this, 
we choose the extension of $F$ to $E_i$ to be highly oscillating while 
keeping $F(E_i)$ very close in Hausdorff distance to the straight line segment 
\[
	\big[F(x_i),F(x_i)+\lambda(F(x_i)-\Phi(x_i))/|F(x_i)-\Phi(x_i)|\big]
\] 
for a small $\lambda>0$.
Next, Corollary \ref{cor:curves} furnishes a nonflat $G$-equivariant conformal 
minimal immersion $F':\overline X\to\R^n$ and a $G$-equivariant collection 
of (big) closed subarcs $\{J_I\subset\mathring I\colon I\in\Jcal\}$ satisfying 
the following conditions (cf.\ properties (v)--(xi) in the proof of 
Lemma \ref{lem:proper} as well as 
\cite[Lemma 4.3]{AlarconDrinovecForstnericLopez2015PLMS}):
\begin{enumerate}[\rm (AA)]
\item[(v)] $F'$ is as close to $F$ as desired in $\Cscr^1(K,\R^n)$.
\item[(vi)] $F'$ is close to $F$ uniformly in $\overline X$ 
(depending on how small the extrinsic diameter of the paths $F(E_i)$ is
for $i=1,\ldots,m$). In particular, 
we may guarantee in view of (iv) that if $I\in\Jcal$ and $x,y\in I$ then 
\[
	\max\big\{|F(x)-F(y)|,|F'(x)-F'(y)|,|F'(x)-F(y)|\big\}<\epsilon_0.
\]
\item[(vii)] Let $\Upsilon=\bigcup_{I\in\Jcal}J_I\subset bX$ and for every 
$i\in\{1,\ldots,m\}$ denote by $\Gamma_i\subset bX$ the closure of the 
connected component of $bX\setminus  \bigcup_{I\in\Jcal} J_I$ 
containing $x_i$. For every $i=1,\ldots, m$ there is a closed 
smoothly bounded neighbourhood $W_i$ of $\Gamma_i$ in $\overline X$, 
disjoint from $K$, such that the sets $W_1,\ldots, W_m$ 
are pairwise disjoint, and for every path $\gamma\subset \overline X$ 
connecting $bW_i\cap X$ to $\Gamma_i$ the projection of the path 
$F'(\gamma)\subset\R^n$ onto the vector $F(x_i)-\Phi(x_i)\neq 0$ 
has total variation bigger than $\eta$. 
\item[(viii)] $\Flux_{F'}=\Flux_F$.
\end{enumerate}

In constructing $F'$, we impose in part (vii) the partition-based 
projected-length condition of 
\cite[Lemma 4.3, property (P4)]{AlarconDrinovecForstnericLopez2015PLMS}. 
The construction is performed on representative attached arcs and extended 
to their translates by $G$-equivariance.

\noindent{\bf Second deformation.} 
Take a $G$-equivariant collection of closed subarcs 
$\{J_I'\subset \mathring I\colon I\in\Jcal\}$ such that 
$J_I\subset \mathring J_I'$ for every $I\in\Jcal$ and set 
$\Upsilon'=\bigcup_{I\in\Jcal}J_I'$. Choose points $z_I\in\mathring I$, 
$I\in\Jcal$, as in the proof of Lemma \ref{lem:proper}. 
For every $I\in\Jcal$ take an orthonormal frame $(\bu_I,\bv_I)$ 
in $\R^n$ orthogonal to $F(z_I)-\Phi(z_I)\neq 0$ 
(see \eqref{eq:F-Phineq0}), a continuous 
$G$-equivariant map $\tau:bX\to[0,1]$ supported on $\Upsilon'$, 
and a $\Cscr^1$ function $\sigma:\Upsilon'\times\overline\D\to\C$ 
as in the second deformation step in the proof of Lemma \ref{lem:proper}, 
such that
\begin{equation}\label{eq:leeta}
	|\sigma(x,\tau(x)\xi)|\le\eta\ \ \text{for all }(x,\xi)\in bX\times\T
	\quad\text{and}\quad
	|\sigma(x,\tau(x)\xi)|=\eta\ \ \text{for all }(x,\xi)\in \Upsilon\times\T.
\end{equation}
Given $\epsilon_1>0$ to be specified later, Theorem \ref{th:RH} applied 
with these data and the map $\varkappa:bX \times\overline{\D}\to\R^n$ 
given by \eqref{eq:varkappa-lem} provides a nonflat $G$-equivariant 
conformal minimal immersion $\wt F:\overline X\to\R^n$ satisfying the 
following conditions.
\begin{enumerate}[(AA)]
\item[(I)] $\wt F$ is as close as desired to $F'$ in 
$\Cscr^1(\overline X\setminus\Omega,\R^n)$, where 
$\Omega\subset \overline X$ 
is an arbitrarily small neighbourhood of $\Upsilon'$ with 
$K\cap\Omega=\varnothing$. In particular, we can ensure in view of (vi) 
that if $x\in I\in \Jcal$ and $y\in X\cap b\Omega$ lies in the component of 
$b\Omega$ intersecting $I$, then $|\wt F(y)-F'(x)|<\epsilon_0$.
\item[(II)] $\eta-\epsilon_1<|\wt F(x)-F'(x)|< \eta+\epsilon_1$ for all 
$x\in \Upsilon$, $|\wt F(x)-F'(x)|< \eta+\epsilon_1$ for all $x\in \Upsilon'$, 
and $|\wt F(x)-F'(x)|< \epsilon_1$ for all $x\in bX\setminus\Upsilon'$. 
See \eqref{eq:leeta} and \eqref{eq:varkappa-lem}, and recall that the 
directing vectors $\bu_I$ and $\bv_I$ are unit vectors for every $I\in\Jcal$.
\item[(III)] $|\langle \wt F(x)-F'(x),F(z_I)-\Phi(z_I)\rangle|<\epsilon_1$ for all 
$x\in J_I'$, $I\in \Jcal$. Recall that $\bu_I$ and $\bv_I$ are orthogonal 
to $F(z_I)-\Phi(z_I)$.
\item[(IV)] $\Flux_{\wt F}=\Flux_{F'}$.
\end{enumerate}
Assuming that the numbers $\epsilon_0>0$ and $\epsilon_1>0$ 
are sufficiently small, $\wt F$ satisfies the conclusion of the lemma. 
Indeed, concerning condition (a), for every $x\in bX$ we have by (II), (vi), 
and \eqref{eq:distx0bK} that
\begin{equation}\label{eq:<<<}
	|\wt F(x)-F'(x)|<\eta+\epsilon_1,\quad |F'(x)-F(x)|
	<\epsilon_0,\quad\text{ and }\quad |F(x)-\Phi(x)|<\delta-\mu_0.
\end{equation}
On the other hand, \eqref{eq:Phie0} and (vi) give that
\begin{equation}\label{eq:almostorthogonal}
	|(F(x)-\Phi(x))-(F(z_I)-\Phi(z_I))|<2\epsilon_0\quad 
	\text{ for every }x\in I,\; I\in\Jcal.
\end{equation}
By the Schwarz inequality, this, (II), (III), and \eqref{eq:<<<} ensure 
for every $x\in J_I'$, $I\in\Jcal$, that
\begin{eqnarray*}
	|\langle \wt F(x)-F(x),F(x)-\Phi(x)\rangle| & \le &  
	|\langle F'(x)-F(x),F(x)-\Phi(x)\rangle| 
	\\ & &+ \; |\langle \wt F(x)-F'(x),F(z_I)-\Phi(z_I)\rangle| \\
	& & +\; |\langle \wt F(x)-F'(x),(F(x)-\Phi(x))-(F(z_I)-\Phi(z_I))\rangle|\\
	& < & \epsilon_0\delta + \epsilon_1+2\epsilon_0(\eta+\epsilon_1),
\end{eqnarray*}
thus $\wt F(x)-F(x)$ is approximately orthogonal to $F(x)-\Phi(x)$ when 
$\epsilon_0$ and $\epsilon_1$ are small. Hence, Pythagoras' theorem 
implies that
\[
	|\wt F(x)-\Phi(x)|< \sqrt{|\wt F(x)-F(x)|^2
	+|F(x)-\Phi(x)|^2}+O(\epsilon_0,\epsilon_1)
	\quad \text{ for every }x\in\Upsilon',
\]
where $O(\epsilon_0,\epsilon_1)>0$ is a constant depending on $\epsilon_0$
and $\epsilon_1$ that goes to $0$ as $\epsilon_0,\epsilon_1\to 0$. 
In view of \eqref{eq:<<<}, this guarantees that
\[
	|\wt F(x)-\Phi(x)|<\sqrt{(\eta+\epsilon_1+\epsilon_0)^2
	+(\delta-\mu_0)^2}+O(\epsilon_0,\epsilon_1)
	<\sqrt{\delta^2+\eta^2}\quad \text{ for every }x\in\Upsilon',
\]
where the latter inequality holds whenever $\epsilon_0>0$ and 
$\epsilon_1>0$ are sufficiently small.  Likewise, for every 
$x\in bX\setminus\Upsilon'$ we have by (II) and \eqref{eq:<<<} that
\[
	|\wt F(x)-\Phi(x)| \le |\wt F(x)-F'(x)|+|F'(x)-F(x)|+|F(x)-\Phi(x)|
	<\epsilon_1+\epsilon_0+\delta-\mu_0<\sqrt{\delta^2+\eta^2},
\]
where the last inequality holds for small enough $\epsilon_0$ and 
$\epsilon_1$. This proves (a). 

Regarding condition (b), consider a path $\gamma$ in 
$\overline X\setminus\mathring K$ connecting a point $x\in bK$ with a point 
$y\in bX$, and otherwise disjoint from $bK\cup bX$. 
If $y\in bX\setminus \mathring \Upsilon=\bigcup_{i=1}^m\Gamma_i$, 
there are $i\in\{1,\ldots,m\}$ and a subpath 
$\gamma'\subset\gamma\cap W_i$ connecting $bW_i\cap X$ with 
$\Gamma_i$. Taking into account \eqref{eq:almostorthogonal} and that the 
vectors $\bu_I$ and $\bv_I$ are orthogonal to $F(z_I)-\Phi(z_I)$, and 
assuming that the approximations in Theorem \ref{th:RH} to obtain 
$\wt F$ from $F'$ are close enough, we have that the projection of 
$\wt F|_{\gamma'}$ onto the vector $F(x_i)-\Phi(x_i)$ is very close to that 
of $F'|_{\gamma'}$ when $\epsilon_0>0$ is small. 
Note that the length of a curve cannot decrease very much under uniform 
approximation. Applying the argument of 
\cite[Claim 4.4]{AlarconDrinovecForstnericLopez2015PLMS} to the stronger 
projected-length condition imposed in part (vii) of the first deformation, we obtain 
\[ 
	\length(\widetilde F\circ\gamma)>\eta 
\] 
for every such path $\gamma$, provided that the approximation is sufficiently close.
If on the contrary $y\in \Upsilon\subset\Omega$, then in view of (I) and 
the first inequality in (II) there is a point $y'\in\gamma\cap X\cap b\Omega$ 
near $y$ such that
\[
	\length(\wt F(\gamma)) > |\wt F(y)-\wt F(y')| 
	\ge |\wt F(y)-F'(y)| -  |F'(y)-\wt F(y')|>\eta-\epsilon_1-\epsilon_0.
\]
Therefore, $\dist_{\wt F}(bK,bX)>\eta-\epsilon_1-\epsilon_0$.  
Choose the approximation on $K$ sufficiently close that 
\[ 
	\dist_{\widetilde F}(x_0,bK)>d+\frac{\mu_0}{2}
\] 
(see \eqref{eq:distx0bK}). Taking $\epsilon_0+\epsilon_1<\mu_0/2$, the preceding boundary-collar 
estimate gives $\dist_{\widetilde F}(x_0,bX)>d+\eta$, which is condition (b). 
Finally, (c) follows from (viii) and (IV).
\end{proof}

%
%
\subsection{A general position lemma}
As we have already mentioned, the proofs of Theorems \ref{th:proper} 
and \ref{th:complete-proper} require a recursive application of Lemma 
\ref{lem:proper}. For this lemma to be applied, we need to begin with a 
$G$-equivariant conformal minimal immersion $\overline X\to\R^n$ vanishing 
nowhere on $bX$; see \eqref{eq:lemproper}. Such an immersion is 
provided by the following result.

%
%
\begin{lemma}\label{lem:nozeros}
If $X$ and $G$ are as in Theorem \ref{th:proper}, then every 
$F\in\CMI_{G,*}^1(\overline X,\R^n)$ can be approximated as closely 
as desired uniformly on $\overline X$ by 
$\wt F\in\CMI_{G,*}^1(\overline X,\R^n)$ with $0\notin\wt F(bX)$.
\end{lemma}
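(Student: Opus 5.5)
Since $bX$ is a compact smooth $1$-manifold and $F$ is an immersion, the boundary curve $F(bX)$ has Hausdorff dimension one. The natural plan is therefore a perturbation argument that keeps everything $G$-equivariant. The crude version would be to translate $F$ by a small vector $v$ so that $0\notin F(bX)+v$. This fails, because $F+v$ is $G$-equivariant only when $v$ is $G$-fixed, and the $G$-fixed subspace $(\R^n)^G$ may be trivial. We therefore need a richer, equivariant finite-dimensional family of perturbations, and then a transversality (Sard-type) argument.

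\textbf{Step 1: an equivariant family.} Using Theorem \ref{th:FFbis} (equivalently, the period dominating $G$-equivariant sprays of \cite[Lemma 4.5]{Forstneric2024PLMS} used in the proof of Theorem \ref{th:RH}), we construct a holomorphic family $F_t\in\CMI^1_{G,*}(\overline X,\R^n)$ with the following properties.
\begin{itemize}
\item The parameter $t$ ranges over a ball $B\subset\R^N$, and $F_0=F$.
\item $\Flux_{F_t}=\Flux_F$ for all $t$, and $F_t$ depends smoothly on $t$ in $\Cscr^1(\overline X)$.
\item The family is \emph{pointwise submersive on $bX$}: for every $x\in bX$ the map $t\mapsto F_t(x)\in\R^n$ has surjective differential at $t=0$.
\end{itemize}
Freeness of the action on $bX$ is what makes the last property possible. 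Fix a point $x\in bX$. Its orbit $Gx$ consists of $|G|$ distinct points. A local perturbation of $f=2\di F/\theta$ near $x$, in any null direction, can be pushed by equivariance to the other points of the orbit, as in \eqref{eq:fj}, without any constraint. Concretely, we attach small $G$-invariant families of arcs, or use the spray with fibre directions spanning $\C^n$ at finitely many points near $bX$, and take real parts of the integrals. This gives variations of $F_t(x)$ that span $\R^n$.

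\textbf{Step 2: compactness.} Since submersivity is an open condition, compactness of $bX$ lets us finitely many such local families; we then compose them into one family with a common parameter ball $B$. After shrinking $B$, the map $\Psi:bX\times B\to\R^n$, $\Psi(x,t)=F_t(x)$, is a submersion.

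\textbf{Step 3: transversality.} Because $\Psi$ is a submersion, $\Psi^{-1}(0)$ is a smooth submanifold of $bX\times B$ of dimension $1+N-n\le N-2$. By Sard's theorem its projection to $B$ has measure zero. Hence for almost every $t$ near $0$ we have $0\notin F_t(bX)$. Choosing such a $t$ close to $0$ gives $\wt F=F_t$, which approximates $F$ uniformly on $\overline X$ and stays nonflat, since nonflatness is an open condition.

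\textbf{Main obstacle.} The hard part is Step 1, namely producing $G$-equivariant deformations with prescribed pointwise variation on $bX$. The plan is to obtain them from a $G$-equivariant period dominating spray over $\overline X/G$, as in the proof of Theorem \ref{th:RH}, enlarged by directions that are localised near representative boundary points and extended by \eqref{eq:fj}. The period conditions \eqref{eq:RHperiods} and \eqref{eq:RHperiods2} are then restored by the dominating part of the spray, which keeps the perturbations well defined and equivariant. As a simpler alternative, one could apply Theorem \ref{th:RH} with tiny discs in a $G$-compatible frame near each zero of $F|_{bX}$. This pushes $F(bX)$ off $0$ locally, with small $\Cscr^0$ change.
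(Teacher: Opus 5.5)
Your outline is viable, but it takes a much heavier route than the paper. The paper never deforms $F$ in the target. It first extends $F$ to a neighbourhood $R$ of $\overline X$ by Theorem \ref{th:FFbis}. An immersion is locally injective, so $F^{-1}(0)$ is discrete, and some neighbourhood $N$ of $bX$ contains only the finitely many zeros $B=F^{-1}(0)\cap bX$. The paper then sets $\wt F=F\circ\varphi$, where $\varphi$ is a $G$-equivariant biholomorphism close to the identity with $\varphi(bX)\subset N\setminus B$. As in the proof of Theorem \ref{th:exposing}, $\varphi$ is obtained by lifting from $R/G$ the flow of a holomorphic vector field that is transverse to $b(X/G)$ at $B/G$ and vanishes to high order at $X_0/G$. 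Equivariance, the immersion property and the behaviour at $X_0$ are then automatic, and no sprays, periods or integration constants enter.

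Your parametric transversality argument puts all the work into Step~1. What it buys is generality: the same argument would push $F(bX)$ off any given submanifold of $\R^n$ of codimension at least two. The paper's reparametrisation trick, by contrast, relies on the preimage of a point being discrete.

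If you carry out Step~1, correct one imprecision. Admissible variations of $f$ must be tangent to $\bA$, i.e.\ $\dot f(y)\cdot f(y)=0$, so they lie in the hyperplane $f(y)^{\perp}$ rather than along an arbitrary null direction. Surjectivity of $t\mapsto F_t(x)$ then comes from nonflatness: $f$ is projectively nonconstant on every arc ending at $x$, so two such hyperplanes differ and together span $\C^n$. The equivariant integration constant comes for free by averaging a primitive, $F_t=\frac{1}{|G|}\sum_{g\in G}g^{-1}\circ\tilde F_t\circ g$.

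Your alternative via Theorem \ref{th:RH} also works, provided the frame $(\bu,\bv)$ at each zero $p\in bX$ is taken orthogonal to $dF_p(T_pbX)$. Otherwise the attached circles may pass through $0$ where the cut-off function is strictly between $0$ and $1$.
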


%
%
\begin{proof}
By Remark \ref{rem:XG} and 
Theorem \ref{th:FFbis} we may assume that $X$ is a relatively compact 
$G$-invariant domain with real analytic boundary in an open Riemann 
surface $R$, that $G$ acts on $R$ by holomorphic automorphisms, 
that $G$ acts freely on $R\setminus X$, and that $F$ is a nondegenerate 
$G$-equivariant conformal minimal immersion $F:R\to\R^n$. Further, after 
shrinking $R$ around $\overline X$, we may assume that the stabiliser 
locus of the action of $G$ on $R$ equals the finite set 
\[ 
	X_0=\{x\in X:G_x\ne\{1\}\}\subset X. 
\] 
Since the action of \(G\) is free on a neighbourhood of \(bX\), 
the quotient projection $\pi:R\to R/G$ maps $\overline X$ properly
onto the smoothly bounded domain $\overline X/G\subset R/G$.
Since $F$ is an immersion on $\overline X$, its zero set is locally discrete.
Choose a neighbourhood $N$ of $bX$ such that
\[ 
	B:=F^{-1}(0)\cap N= F^{-1}(0)\cap bX;
\]
note that this is a finite set. Choose an open neighbourhood 
$U\subset R/G$ of $\overline X/G$ and a biholomorphism
\[
	\phi: U \to \phi(U) \subset R/G, 
\]
close to the identity on $\overline X/G$, such that $\phi$ is tangent to the 
identity at the points in $X_0/G$ and satisfies
\[
	\phi(bX/G) \subset (R/G)\setminus (B/G). 
\]
The last condition is achieved by a general position argument. 
For this, one uses the flow of a holomorphic vector field on the quotient, 
transverse to the boundary at $B/G$ and vanishing to sufficiently high order 
at the branch values $X_0/G$. Set $\wt U=\pi^{-1}(U)\subset R$. 
We see as in the proof of Theorem \ref{th:exposing} that $\phi$ lifts 
to a $G$-equivariant biholomorphism $\varphi:\wt U\to\varphi(\wt U)\subset R$ 
which approximates the identity on $\overline X$, it fixes the points of $X_0$
to a given finite order, and it satisfies
\[
	\varphi(bX)\subset N \qquad\text{and}\qquad 
	\varphi(bX)\cap B=\varnothing.
\]
The map $\wt F=F\circ\varphi : \wt U\to \R^n$ then satisfies the lemma.
\end{proof}

%
%
%
\section{Proof of the Main Theorems}\label{sec:proof}
%
%
\begin{proof}[Proof of Theorem \ref{th:complete-proper}]
In view of Remark \ref{rem:XG}, we regard $X$ as a relatively compact 
$G$-invariant domain with real analytic boundary in an open Riemann 
surface $R$ and assume that $G$ acts on $R$ by holomorphic 
automorphisms. After shrinking $R$ around $\overline X$, we may assume 
that the stabiliser locus of the action of $G$ on $R$ equals the finite set 
\begin{equation}\label{eq:X0-proof}
	X_0=\{x\in X:G_x\ne\{1\}\}\subset X.
\end{equation}
By Theorem \ref{th:FF} and Lemma \ref{lem:nozeros}, there is a 
nondegenerate $G$-equivariant conformal minimal immersion 
$F_0:R\to\R^n$ with $0\notin F_0(bX)$. We use the same notation for its 
restriction to $\overline X$.  Moreover, up to applying a homothety, 
we may assume that $F_0(\overline X)\subset \B$, where $\B$ denotes 
the unit ball in $\R^n$. In particular, there are numbers $0<r_0<r_1<1$ 
and a smoothly bounded compact 
$G$-invariant domain $K_0\subset X$ that is a strong 
deformation retract of $X$ such that
\begin{equation}\label{eq:F0-r0-r1} 
	F_0(\overline {X\setminus K_0})
	\subset r_1\B\setminus r_0\overline\B.
\end{equation}  
Choose any increasing sequence 
$r_1<r_2<r_3<\cdots$ such that
\begin{equation}\label{eq:rj-2}
	\lim_{j\to\infty} r_j=1 \quad\text{ and }
	\quad \sum_{j\ge 2} \sqrt{r_{j+1}^2-r_{j-1}^2}<\infty.
\end{equation}
Fix a point $x_0\in\mathring K_0$ and a number $\epsilon_0>0$. 
To complete the proof we shall inductively construct a sequence of 
nondegenerate $G$-equivariant conformal minimal immersions 
$F_j:\overline X\to\R^n$, smoothly bounded compact $G$-invariant 
domains $K_j\subset \mathring X$ that are strong deformation retracts 
of $X$, and positive numbers $\epsilon_j>0$, satisfying the following 
conditions for every $j=1,2,\ldots$
\begin{enumerate}[(a$_j$)] 
\item $K_{j-1}\subset \mathring K_j$.
\item $F_j(\overline{X\setminus K_j})\subset r_{j+1}\B\setminus r_j\overline\B$.
\item $F_j(\overline{X\setminus K_{j-1}})
	\subset r_{j+1}\B\setminus r_{j-1}\overline\B$.
\item $\|F_j-F_{j-1}\|_{1,K_{j-1}}<\epsilon_{j-1}$.
(Here, $\|\cdotp\|_{1,K}=\|\cdotp\|_{\Cscr^1(K)}$.)
\item $\|F_j-F_{j-1}\|_{0,\overline X}<\sqrt{r_{j+1}^2-r_{j-1}^2}$.
\item $\dist_{F_j}(x_0,bK_j)>j$.
\item $\epsilon_j<\epsilon_{j-1}/2$ and if $F:X\to\R^n$ is a conformal 
harmonic map such that 
$\|F-F_j\|_{1,K_j}<2\epsilon_j$, then 
$F|_{K_{j}}$ is a nondegenerate immersion, 
$F(\overline{K_j\setminus K_{j-1}})
 	\subset r_{j+1} \B\setminus r_{j-1}\overline\B$, 
and $\dist_{F}(x_0,bK_j)>j$.
\end{enumerate}
In addition, we will ensure that
\begin{equation}\label{eq:X=CupKj}
	X=\bigcup_{j\ge 1}K_j.
\end{equation}

With such sequences in hand, conditions (d$_j$), (g$_j$), and 
\eqref{eq:X=CupKj} ensure that there is a limit nondegenerate 
$G$-equivariant conformal minimal immersion
$F=\lim_{j\to\infty} F_j:X\to\R^n$ such that
\begin{equation}\label{eq:F-Fj-proof}
	\|F-F_j\|_{1,K_j}<2\epsilon_j\quad \text{ for every }j=0,1,2,\ldots
\end{equation}
By \eqref{eq:rj-2} and (e$_j$) the sequence $F_j$ also converges uniformly 
on $\overline X$ to a continuous $G$-equivariant map 
$F:\overline X\to \overline\B$, while  \eqref{eq:rj-2}, \eqref{eq:F-Fj-proof}, 
and (g$_j$) imply that the restriction $F|_X:X\to\B$ is proper and complete. 
Properness also gives $F(bX)\subset b\B$.

It therefore remains to explain the induction. The basis is given by the already 
fixed $F_0$, $K_0$, and $\epsilon_0$. Condition (b$_0$) holds by 
\eqref{eq:F0-r0-r1}, while (f$_0$) is implied by the fact that 
$x_0\in\mathring K_0$. All other conditions are vacuous for $j=0$. 
For the inductive step, assume that for some $j\ge 1$ we have $F_{j-1}$, 
$K_{j-1}$, and $\epsilon_{j-1}$ satisfying (b$_{j-1}$). This condition enables 
us to apply Lemma \ref{lem:proper} to the data
\[
	F=F_{j-1},\quad K=K_{j-1},\quad r=r_{j-1},\quad s=r_{j+1},
	\quad \delta=r_{j+1}-r_j,
\]
obtaining a nondegenerate $G$-equivariant conformal minimal immersion 
$F_j:\overline X\to\R^n$ satisfying (c$_j$), (d$_j$), (e$_j$), and 
\begin{equation}\label{eq:FjbX}
	F_j(bX)\subset r_{j+1}\B\setminus r_j\overline\B.
\end{equation}
To ensure that $F_j$ is nondegenerate, use that $F_{j-1}$ is nondegenerate 
and choose the approximation in (d$_j$) close enough.
In turn, by applying Lemma \ref{lem:equivariant-distance} to $F_j$ 
we can assume in addition that
\begin{equation}\label{eq:distFjbX}
	\dist_{F_j}(x_0,bX)>j.
\end{equation}
The additional perturbation is chosen sufficiently small uniformly on $\overline X$ 
and in the $\Cscr^1$ topology on $K_{j-1}$ so that conditions 
(c$_j$)--(e$_j$) and \eqref{eq:FjbX} remain valid.
By continuity of $F_j$ and compactness of $\overline X$, 
we can choose in view of \eqref{eq:FjbX} and \eqref{eq:distFjbX} a 
smoothly bounded compact $G$-invariant domain $K_j\subset X$ 
which is a strong deformation retract of $X$ and satisfies conditions
(a$_j$), (b$_j$), and (f$_j$). Finally, 
choose $\epsilon_j>0$ so small that (g$_j$) holds. 
It is clear that in this process we can choose the compact domain 
$K_j$ in each step large enough to ensure \eqref{eq:X=CupKj}. 
This closes the induction and completes the proof.
\end{proof}
%
%

%
%
\begin{proof}[Proof of Theorem \ref{th:proper}]
Reasoning as in the proof of Theorem \ref{th:complete-proper}, we can assume that we have objects $X$, $G$, $R$, $X_0$, $F_0$, $r_0$, and $K_0$ as in that proof such that \eqref{eq:X0-proof} holds and
\begin{equation}\label{eq:F0-r0-1} 
	F_0(\overline X\setminus\mathring K_0)
	\subset \B\setminus r_0\overline\B.
\end{equation} 
Cf.\ \eqref{eq:F0-r0-r1}. Fix $\epsilon_0>0$. A recursive application of Lemma \ref{lem:proper} then gives a sequence of nondegenerate $G$-equivariant conformal minimal immersions $F_j:\overline X\to\R^n$, smoothly bounded compact $G$-invariant domains $K_j\subset \mathring X$ that are strong deformation retracts 
of $X$, and positive numbers $\epsilon_j>0$, such that the following conditions are satisfied for every $j=1,2,\ldots$
\begin{enumerate}[(a$'_j$)] 
\item $K_{j-1}\subset \mathring K_j$.
\item $F_j(\overline{X\setminus K_j})\subset (j+1)\B\setminus j\overline\B$.
\item $F_j(\overline{X\setminus K_{j-1}})\subset (j+1)\B\setminus (j-1)\overline\B$.
\item $\|F_j-F_{j-1}\|_{1,K_{j-1}}<\epsilon_{j-1}$.
\item[(g$'_j$)] $\epsilon_j<\epsilon_{j-1}/2$ and if $F:X\to\R^n$ is a 
conformal harmonic map such that $\|F-F_j\|_{1,K_j}<2\epsilon_j$, then 
$F|_{K_j}$ is a nondegenerate immersion and 
$F(\overline{K_j\setminus K_{j-1}})\subset (j+1)\B\setminus (j-1)\overline\B$.
\end{enumerate}
In addition, we ensure \eqref{eq:X=CupKj}. This is done exactly as in the proof of 
Theorem \ref{th:complete-proper} but replacing $r_j$ by $j$ for $j=2,3,\ldots$; 
cf.\ conditions (a$_j$)--(d$_j$) and (g$_j$). 
For the initial step $j=1$, we apply Lemma \ref{lem:proper} with $r=r_0$, $s=2$, 
and $\delta=1$. (We may also guarantee for every $j\ge 2$ that 
$\|F_j-F_{j-1}\|_{0,\overline X}<2\sqrt{j}$ as in (e$_j$), 
but this is not needed for the current aim.) 
By (d$'_j$), \eqref{eq:X=CupKj}, and (g$'_j$), there is a limit nondegenerate $G$-equivariant conformal minimal immersion $F=\lim_{j\to\infty} F_j:X\to\R^n$ satisfying \eqref{eq:F-Fj-proof}. This condition and the second part of (g$'_j$) ensure that $F:X\to\R^n$ is a proper map.
\end{proof}
%
%

%
%
\begin{proof}[Proof of Theorem \ref{th:complete}]
As in the proof of Theorem \ref{th:complete-proper}, we may assume that 
$X$ is a relatively compact $G$-invariant domain with real analytic boundary 
in an open Riemann surface $R$, that $G$ acts on $R$ by holomorphic 
automorphisms, and that the stabiliser locus 
of the action of $G$ on $R$ equals the finite set $X_0$ in \eqref{eq:X0-proof}. 
For the existence assertion, Theorem \ref{th:FF} supplies an initial nondegenerate 
$G$-equivariant conformal minimal immersion on $R$.
For the approximation assertion, we start instead with the prescribed 
nonflat conformal $G$-equivariant minimal immersion $\overline X\to \R^n$ 
and apply Theorem \ref{th:FFbis} to approximate it uniformly on $\overline X$ 
by a nonflat $G$-equivariant conformal minimal immersion $F_0$ 
defined on a neighbourhood of $\overline X$.
Nondegeneracy is ensured by additional value interpolation at finitely many points outside $\overline X$ in the chosen neighbourhood. 

It remains to show that $F_0$ obtained above 
can be approximated as closely as desired uniformly 
on $\overline X$ by a continuous $G$-equivariant map $F: \overline X\to\R^n$ 
whose restriction $F|_{X}: X\to\R^n$ is a complete, nondegenerate, conformal 
minimal immersion.

Fix $\epsilon>0$. Choose a number 
$0<\epsilon_0<\epsilon/2$, a smoothly bounded compact 
$G$-invariant domain $K_0\subset X$ that is a strong deformation 
retract of $X$, and a point $x_0\in\mathring K_0$. 
To complete the proof, it suffices 
to construct a sequence of nondegenerate $G$-equivariant conformal 
minimal immersions $F_j:\overline X\to\R^n$, smoothly bounded compact 
$G$-invariant domains $K_j\subset \mathring X$ that are strong 
deformation retracts of $X$, and positive numbers $\epsilon_j>0$
satisfying \eqref{eq:X=CupKj} and the 
following conditions for every $j=1,2,\ldots$:
\begin{enumerate}[(A$_j$)] 
\item $K_{j-1}\subset \mathring K_j$.
\item $\|F_j-F_{j-1}\|_{0,\overline X}<\epsilon_{j-1}$.
\item $\|F_j-F_{j-1}\|_{1,K_{j-1}}<\epsilon_{j-1}$.
\item $\dist_{F_j}(x_0,bK_j)>j$.
\item $\epsilon_j<\epsilon_{j-1}/2$, and if $F:X\to\R^n$ is a 
conformal harmonic map such that $\|F-F_j\|_{1,K_j}<2\epsilon_j$
then $F|_{K_j}$ is a nondegenerate 
immersion and $\dist_{F}(x_0,bK_j)>j$.
\end{enumerate}
Indeed, by (B$_j$) and (E$_j$) there is a limit continuous $G$-equivariant map 
$
	F=\lim_{j\to\infty} F_j:\overline X\to\R^n
$
 with
\begin{equation}\label{eq:D-Fj-proof2}
	\|F-F_j\|_{0,\overline X}<2\epsilon_j\quad 
	\text{ for every }j=0,1,2,\ldots
\end{equation}
(Cf.\ \eqref{eq:F-Fj-proof}). In particular, 
$\|F-F_0\|_{0,\overline X}<2\epsilon_0<\epsilon$. 
Moreover, \eqref{eq:D-Fj-proof2} together with (C$_j$), (D$_j$) and (E$_j$) imply 
that $F|_X:X\to\R^n$ is a nondegenerate, complete, conformal minimal 
immersion with $\|F-F_j\|_{1,K_j}<2\epsilon_j$ for $j=0,1,2,\ldots$. 

The induction is done as follows. For the base case, note that $F_0$, $K_0$, 
and $\epsilon_0$ meet (D$_0$) while all other conditions are vacuous. 
Fix $j\ge 1$ and assume that we have objects $F_{j-1}$, $K_{j-1}$, 
and $\epsilon_{j-1}$ as above. (It is not needed that (D$_{j-1}$) holds.) 
Lemma \ref{lem:equivariant-distance} then provides a nondegenerate 
$G$-equivariant conformal minimal immersion $F_j:\overline X\to\R^n$ 
satisfying (B$_j$), (C$_j$) and $\dist_{F_j}(x_0,bX)>j$. 
(To ensure that $F_j$ is nondegenerate, use that $F_{j-1}$ is such 
and choose the approximation in (B$_j$) close enough.) 
This enables us to choose first $K_j$ so large that (A$_j$) and (D$_j$) 
hold (and to ensure \eqref{eq:X=CupKj} in the process), and then
$\epsilon_j>0$ so small that (E$_j$) holds. This closes the induction 
and completes the proof.
\end{proof}

%
%
\begin{proof}[Proof of Theorem \ref{th:NC}] 
We use the null-curve version of Theorem \ref{th:RH} from 
Corollary \ref{cor:RH-null}. Fix a $G$-invariant Hermitian inner 
product on $\C^n$, for example 
\[ 
	\langle u,v\rangle_G = \sum_{g\in G}\langle gu,gv\rangle_{\mathrm{st}}, 
\] 
where $\langle \cdotp,\cdotp\rangle_{\mathrm{st}}$ denotes the 
standard Hermitian inner product. 
The action of $G$ is unitary with respect to this Hermitian inner product. Its 
associated norm and Riemannian metric are equivalent to the standard 
Euclidean ones. Consequently, properness, boundedness, and completeness 
do not depend on which of these metrics is used. For null curves one works 
directly with holomorphic null immersions rather than with their real parts. 
The exposing construction on the source is unchanged. 
The differential data on the attached discs 
and the period-dominating sprays take values in the 
punctured null quadric, and all local data are chosen on one representative 
of each $G$-orbit and transported by the complex orthogonal action of $G$. 
Using the $G$-invariant Hermitian norm in the boundary-lifting argument, 
the proof of Theorem \ref{th:proper} gives part {\rm (a)}. Using the 
Riemannian metric associated to the same Hermitian inner product, 
the distance-increasing construction and the proof of Theorem 
\ref{th:complete} give part {\rm (b)}. Equivalence with the standard 
Euclidean metric completes the proof.
\end{proof}

%
%
%
%
\subsection*{Acknowledgements}
Alarc\'on is partially supported by the State Research Agency (AEI) via the grant no.\ PID2023-150727NB-I00, funded by MICIU/AEI/10.13039/501100011033 and ERDF/EU, Spain.
Forstneri\v c is supported by the European Union
(ERC Advanced grant HPDR, 101053085) 
and grant P1-0291 from ARIS, Republic of Slovenia. 

%
%
%
\subsection*{Declaration on the use of AI} 
The mathematical ideas, statements, and proofs in this article 
are due to the authors. Generative AI tools (mainly, GPT 6.1 Sol)
were used to verify mathematical correctness and completeness, and
to assist with language editing. The authors take full responsibility 
for the correctness and presentation of the article.



\begin{thebibliography}{10}

\bibitem{Alarcon2024Yang}
A.~Alarc{\'o}n.
\newblock The {Yang} problem for complete bounded complex submanifolds: a
  survey.
\newblock In {\em Cutting-edge mathematics. RSME 2022, Ciudad Real, Spain,
  January 17--21, 2022}, pages 1--25. Cham: Springer, 2024.

\bibitem{AlarconDrinovecForstnericLopez2015PLMS}
A.~Alarc{\'o}n, B.~Drinovec~Drnov{\v{s}}ek, F.~Forstneri\v{c}, and F.~J.
  L{\'o}pez.
\newblock Every bordered {R}iemann surface is a complete conformal minimal
  surface bounded by {J}ordan curves.
\newblock {\em Proc. Lond. Math. Soc. (3)}, 111(4):851--886, 2015.

\bibitem{AlarconDrinovecForstnericLopez2019TAMS}
A.~Alarc{\'o}n, B.~Drinovec~Drnov\v{s}ek, F.~Forstneri\v{c}, and F.~J.
  L{\'o}pez.
\newblock Minimal surfaces in minimally convex domains.
\newblock {\em Trans. Amer. Math. Soc.}, 371(3):1735--1770, 2019.

\bibitem{AlarconForstneric2013MA}
A.~Alarc{\'o}n and F.~Forstneri\v{c}.
\newblock Every bordered {R}iemann surface is a complete proper curve in a
  ball.
\newblock {\em Math. Ann.}, 357(3):1049--1070, 2013.

\bibitem{AlarconForstneric2015MA}
A.~Alarc{\'o}n and F.~Forstneri\v{c}.
\newblock The {C}alabi--{Y}au problem, null curves, and {B}ryant surfaces.
\newblock {\em Math. Ann.}, 363(3-4):913--951, 2015.

\bibitem{AlarconForstnericLopez2016MZ}
A.~Alarc{\'o}n, F.~Forstneri\v{c}, and F.~J. L{\'o}pez.
\newblock Embedded minimal surfaces in {$\mathbb {R}^n$}.
\newblock {\em Math. Z.}, 283(1-2):1--24, 2016.

\bibitem{AlarconForstnericLopez2021}
A.~Alarc\'{o}n, F.~Forstneri\v{c}, and F.~J. L\'{o}pez.
\newblock {\em Minimal surfaces from a complex analytic viewpoint}.
\newblock Springer Monographs in Mathematics. Springer, Cham, 2021.

\bibitem{AlarconLopez2012JDG}
A.~Alarc{\'o}n and F.~J. L{\'o}pez.
\newblock Minimal surfaces in {$\mathbb R^3$} properly projecting into
  {$\mathbb R^2$}.
\newblock {\em J. Differential Geom.}, 90(3):351--381, 2012.

\bibitem{Cerne2004}
M.~{\v{C}}erne.
\newblock Nonlinear {Riemann}-{Hilbert} problem for bordered {Riemann}
  surfaces.
\newblock {\em Am. J. Math.}, 126(1):65--87, 2004.

\bibitem{DAngelo2021}
J.~P. D'Angelo.
\newblock {\em Rational sphere maps}, volume 341 of {\em Prog. Math.}
\newblock Cham: Birkh{\"a}user, 2021.

\bibitem{DrinovecForstneric2007DMJ}
B.~Drinovec~Drnov{\v{s}}ek and F.~Forstneri\v{c}.
\newblock Holomorphic curves in complex spaces.
\newblock {\em Duke Math. J.}, 139(2):203--253, 2007.

\bibitem{Forstneric2023Indag}
F.~Forstneri{\v{c}}.
\newblock Recent developments on {Oka} manifolds.
\newblock {\em Indag. Math., New Ser.}, 34(2):367--417, 2023.

\bibitem{Forstneric2024PLMS}
F.~Forstneri{\v{c}}.
\newblock Minimal surfaces with symmetries.
\newblock {\em Proc. Lond. Math. Soc. (3)}, 128(3):32, 2024.
\newblock Id/No e12590.

\bibitem{Forstneric2003FM}
F.~Forstneri\v{c}.
\newblock The {O}ka principle for multivalued sections of ramified mappings.
\newblock {\em Forum Math.}, 15(2):309--328, 2003.

\bibitem{Forstneric2017E}
F.~Forstneri\v{c}.
\newblock {\em Stein manifolds and holomorphic mappings (The homotopy principle
  in complex analysis)}, volume~56 of {\em Ergebnisse der Mathematik und ihrer
  Grenzgebiete. 3. Folge}.
\newblock Springer, Cham, second edition, 2017.

\bibitem{ForstnericWold2009}
F.~Forstneri\v{c} and E.~F. Wold.
\newblock Bordered {R}iemann surfaces in {${\mathbb C}^2$}.
\newblock {\em J. Math. Pures Appl. (9)}, 91(1):100--114, 2009.

\bibitem{Grauert1958MA}
H.~Grauert.
\newblock Analytische {F}aserungen \"uber holomorph-vollst{\"a}ndigen
  {R}{\"a}umen.
\newblock {\em Math. Ann.}, 135:263--273, 1958.

\bibitem{GunningNarasimhan1967}
R.~C. Gunning and R.~Narasimhan.
\newblock Immersion of open {R}iemann surfaces.
\newblock {\em Math. Ann.}, 174:103--108, 1967.

\bibitem{JorgeXavier1980AM}
L.~P. d.~M. Jorge and F.~Xavier.
\newblock A complete minimal surface in {${\mathbb R}^{3}$} between two
  parallel planes.
\newblock {\em Ann. of Math. (2)}, 112(1):203--206, 1980.

\bibitem{KutzschebauchLarussonSchwarz2022}
F.~Kutzschebauch, F.~L{\'a}russon, and G.~W. Schwarz.
\newblock Equivariant {Oka} theory: survey of recent progress.
\newblock {\em Complex Anal. Synerg.}, 8(3):13, 2022.
\newblock Id/No 15.

\bibitem{Maskit1968}
B.~Maskit.
\newblock The conformal group of a plane domain.
\newblock {\em Am. J. Math.}, 90:718--722, 1968.

\bibitem{Miranda1995}
R.~Miranda.
\newblock {\em Algebraic curves and {Riemann} surfaces}, volume~5 of {\em Grad.
  Stud. Math.}
\newblock Providence, RI: AMS, American Mathematical Society, 1995.

\bibitem{Morales2003GAFA}
S.~Morales.
\newblock On the existence of a proper minimal surface in {$\mathbb{R}^3$} with
  a conformal type of disk.
\newblock {\em Geom. Funct. Anal.}, 13(6):1281--1301, 2003.

\bibitem{Nadirashvili1996IM}
N.~Nadirashvili.
\newblock Hadamard's and {C}alabi-{Y}au's conjectures on negatively curved and
  minimal surfaces.
\newblock {\em Invent. Math.}, 126(3):457--465, 1996.

\bibitem{SchoenYau1997}
R.~Schoen and S.~T. Yau.
\newblock {\em Lectures on harmonic maps}.
\newblock Conference Proceedings and Lecture Notes in Geometry and Topology,
  II. International Press, Cambridge, MA, 1997.

\bibitem{Stout1965TAMS}
E.~L. Stout.
\newblock Bounded holomorphic functions on finite {R}iemann surfaces.
\newblock {\em Trans. Amer. Math. Soc.}, 120:255--285, 1965.

\end{thebibliography}


\end{document}